\newtheorem{thm}{Theorem}[section]
\newtheorem{lem}[thm]{Lemma}
\newtheorem{prop}[thm]{Proposition}
\newtheorem{cor}[thm]{Corollary}
\newtheorem{defn}[thm]{Definition}
\newcommand{\R}{\mathbb{R}}
\newcommand{\N}{\mathbb{N}}
\newcommand{\Sph}{{\mathbb S}}
\newcommand{\bc}{\begin{cor}}
\newcommand{\ec}{\end{cor}}
\newcommand{\bl}{\begin{lem}}
\newcommand{\el}{\end{lem}}
\newcommand{\bp}{\begin{prop}}
\newcommand{\ep}{\end{prop}}
\newcommand{\bd}{\begin{defn}}    % This produces an error????
\newcommand{\ed}{\end{defn}}
\newtheorem{rmrk}[thm]{Remark}   %remember to switch all {rmrk} to {rmrk}
\newcommand{\br}{\begin{rmrk}}
\newcommand{\er}{\end{rmrk}}
\newcommand{\be}{\begin{equation}}
 \newcommand{\ee}{\end{equation}}
\DeclareMathOperator{\Vol}{Vol}
\DeclareMathOperator{\vol}{Vol}
\DeclareMathOperator{\Area}{Area}
\newcommand{\Sc}{{\rm Sc}} 
\def\Xint#1{\mathchoice
{\XXint\displaystyle\textstyle{#1}}%
{\XXint\textstyle\scriptstyle{#1}}%
{\XXint\scriptstyle\scriptscriptstyle{#1}}%
{\XXint\scriptscriptstyle\scriptscriptstyle{#1}}%
\!\int}
\def\XXint#1#2#3{{\setbox0=\hbox{$#1{#2#3}{\int}$ }
\vcenter{\hbox{$#2#3$ }}\kern-.6\wd0}}
\def\dashint{\Xint-}
\title[Conformal Scalar Curvature Compactness Conjecture]{On the Scalar Curvature Compactness Conjecture in the Conformal Case}
\author{Brian Allen}
\address[Brian Allen]{Lehman College, CUNY}
\email{brianallenmath@gmail.com}
\author{Wenchuan Tian}
\address[Wenchuan Tian]{University of California, Santa Barbara}
\email{tian.wenchuan@gmail.com}
\author{Changliang Wang}
\address[Changliang Wang]{
School of Mathematical Sciences and Institute for Advanced Study, Key Laboratory of Intelligent Computing and Applications(Ministry of Education), Tongji University, Shanghai 200092, China}
\email{wangchl@tongji.edu.cn}
\begin{document}

\maketitle
    
\begin{abstract}
Is a sequence of Riemannian manifolds with positive scalar curvature, satisfying some conditions to keep the sequence reasonable, compact? What topology should one use for the convergence and what is the regularity of the limit space? In this paper we explore these questions by studying the case of a sequence of Riemannian manifolds which are conformal to the $n$-dimensional round sphere. We are able to show that the sequence of conformal factors are compact in several analytic senses and are able to establish $C^0$ convergence away from a singular set of small volume in a similar fashion as C. Dong \cite{Dong-PMT_Stability}. Under a bound on the total scalar curvature we are able to show that the limit conformal factor has  weak positive scalar curvature in the sense of weakly solving the conformal positive scalar curvature equation.
\end{abstract}
    
\section{Introduction}

It is an important open question to better understand Riemannian manifolds with scalar curvature bounded from below. Towards this goal, many important rigidity theorems involving scalar curvature have been established such as the positive mass theorem, the Geroch conjecture, and Llarull's theorem. In order to further understand lower curvature bounds we can explore geometric stability questions where we ask for the conditions of the rigidity theorem to almost be satisfied for a sequence and wonder in which sense the sequence converges to the rigid Riemannian manifold. One approach to these geometric stability theorems is to first apply a compactness theorem for sequences with lower scalar curvature bounds and then prove the rigidity theorem in the low regularity setting suggested by the compactness theorem. For instance, if one considers a sequence of tori with scalar curvature $\Sc_j\ge - \frac{1}{j}$, satisfying a reasonable set of extra conditions to keep the sequence from degenerating in uninteresting ways, we can prove stability by first showing the sequence is compact in some particular class of metric spaces and then prove rigidity of the flat torus in this class. Additionally, a resolution to the compactness question for Riemannian manifolds with positive scalar curvature would undoubtedly produce insights into scalar curvature even without applying the compactness theorem to prove stability of scalar curvature rigidity theorems. 

One such compactness conjecture in dimension three has been suggested by M. Gromov \cite{Gromov-Plateau} and further refined by C. Sormani \cite{SormaniIAS} where the conjecture says that the sequence should converge in the volume preserving Sormani-Wenger Intrinsic Flat sense to a rectifiable geodesic metric space with Euclidean tangent cones almost everywhere and satisfying some weak notion of positive scalar curvature. The rotationally symmetric case of this conjecture was studied by J. Park, W. Tian, and C. Wang \cite{Park-Tian-Wang-18} where they were able to give a full answer to the conjecture posed by C. Sormani. More recently, warped products on $S^2\times S^1$ have been studied by W. Tian and C. Wang \cite{tian2023compactness} with an important related example explored by C. Sormani, W. Tian and C. Wang \cite{sormani2023extreme} and open questions announced by C. Sormani \cite{SormaniOberwolfach}. See D. Kazaras and K. Xu \cite{KK} for more important examples related to the conjecture.

In this paper we explore the scalar curvature compactness theorem in the case of a sequence of Riemannian manifolds which are conformal to the $n$-dimensional round sphere. Where the conjecture of C. Sormani assumes a lower bound on the area of all closed minimal surfaces in the Riemannian manifold, we assume a uniform integrability condition on the volume forms $e^{nf_j}$ in \eqref{UnifIntegrabilityVolume} which was also assumed by B. Allen \cite{Allen-conf-torus} in the conformal setting of the Geroch stability conjecture. Studying the compactness of conformal metrics under curvature bounds is also an interesting question in its own right and has been studied by A. Chang and P. Yang \cite{Chang-Yang} on $\Sph^3$ in the case of isospectral metrics, M. Gursky \cite{Gursky} with a $L^p$, $p>\frac{n}{2}$ bound on the Riemann tensor, Y. Li and Z. Zhou \cite{Li-Zhou} and C. Dong, Y. Li, and K. Xu \cite{DLX} with a $L^p$, $p>\frac{n}{2}$ bound on the scalar curvature, and C. Dong and Y. Li \cite{Dong-Li} with a $L^p$, $p>\frac{n}{2}$ bound on the Ricci tensor. In all of these papers the authors are able to conclude stronger notions of convergence due to the stronger curvature assumptions assumed.

In this paper we are able to show that the sequence of conformal factors are compact in several analytic senses and are able to establish $C^0$ convergence away from a singular set of small volume and boundary area in a similar fashion as C. Dong \cite{Dong-PMT_Stability}. The method of C. Dong \cite{Dong-PMT_Stability} has become popular for studying scalar curvature stability problems and has been adapted and applied by C. Dong and A. Song \cite{Dong-Song}, B. Allen, E. Bryden, and D. Kazaras \cite{ABKLLarull}, S. Hirsch and Y. Zhang \cite{HZ}, and more recently by C. Dong \cite{dong2024stability}. Our first main result in this direction is stated as follows.

\begin{thm}\label{thm-MainTheorem1}
    Let $V, \Lambda > 0$, $ n\geq 3$,  $(\Sph^n,g_{\Sph^n})$ be the standard round sphere, and $f_j:\Sph^n\rightarrow \R$ a sequence of smooth functions defining conformal metrics $g_j=e^{2f_j}g_{\Sph^n}$. If 
    \begin{align}
        \Sc_{g_j} &\ge 0, \qquad V^{-1} \le \Vol_{g_j}(\Sph^n) \le V, \qquad
        \\\Vol_{g_j}(U) &\le  \Lambda \Vol_{g_{\Sph^n}}(U)^{\alpha}, \alpha \in (0,1), \quad \forall U \subset \Sph^n \text{ measurable}, \label{UnifIntegrabilityVolume}
    \end{align}
    then there exists a subsequence, and a limiting  function $e^{\frac{(n-2)f_{\infty}}{2}} \in W^{1,q}$ for $1\le q< \frac{4n}{3n-2}$ which is bounded away from zero so that $e^{\frac{(n-2)f_j}{2}}$ converges to $e^{\frac{(n-2)f_{\infty}}{2}}$ in the $L^p$ sense for $1\le p < \frac{2n}{n-2}$,  and $e^{\frac{(n-2)f_j}{2}}$ converges weakly in $W^{1,p}$ for $p < \frac{4n}{3n-2}$. We also have that $e^{\frac{(n-2)f_j}{2}}$ is uniformly bounded away from zero, $e^{\frac{(n-2)f_{\infty}}{2}} \in (0,\infty]$ is well defined everywhere, and $e^{\frac{(n-2)f_{\infty}}{2}}$ is lower semicontinuous.
    
    Moreover, $f_j$ is uniformly bounded from below, $f_{\infty} \in (-\infty,\infty]$ is defined everywhere, $f_{\infty}$ is lower semicontinuous, $f_j \rightarrow f_{\infty}$ in $L^p$, $p \in \left[1,\frac{2n}{n-2}\right)$, and there is a measurable set $Z_j \subset \Sph^n$ where if we let $g_\infty:=e^{2 f_{\infty}}g_{\Sph^n}$ then
    \begin{align}
    \Area_{g_{\Sph^n}}(\partial Z_j)&+\Vol_{g_{\Sph^n}}(Z_j)\rightarrow 0,
        \\\Vol_{g_j}(\Sph^n \setminus Z_j) &\rightarrow \Vol_{g_{\infty}}(\Sph^n), 
        \\|f_j-f_{\infty}|^2&\le C_j \text{ on } \Sph^n \setminus Z_j, \quad C_j \searrow 0. 
    \end{align}

\end{thm}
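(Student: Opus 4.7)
The plan is to work with $u_j := e^{(n-2)f_j/2}$, for which the conformal change of scalar curvature gives
\[-\frac{4(n-1)}{n-2}\Delta_{g_{\Sph^n}} u_j + n(n-1)\, u_j \;=\; \Sc_{g_j}\, u_j^{(n+2)/(n-2)} \;\ge\; 0,\]
and the volume bounds become $V^{-1}\le \|u_j\|_{L^{2n/(n-2)}}^{2n/(n-2)}\le V$. All analytic bounds would then come from testing this equation against carefully chosen negative powers of $u_j$, while the uniform pointwise lower bound would come from combining the no-concentration hypothesis \eqref{UnifIntegrabilityVolume} with a weak Harnack inequality for the supersolution.

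\textbf{Sobolev bounds.} For each $p\in(-1,0)$ the plan is to multiply the conformal equation by $u_j^p$ and integrate; since $\Sc_{g_j}\ge 0$ allows one to drop the scalar-curvature term, one obtains
\[\frac{4(n-1)}{n-2}|p|\int u_j^{p-1}|\nabla u_j|^2 \;\le\; n(n-1)\int u_j^{p+1} \;\le\; C_p,\]
where the last inequality follows from H\"older against the $L^{2n/(n-2)}$ bound. Setting $b=(p+1)/2\in(0,1/2)$, this is equivalent to $\|u_j^b\|_{W^{1,2}}\le C_b$. I would then write $|\nabla u_j|=b^{-1}u_j^{1-b}|\nabla u_j^b|$ and apply H\"older with conjugate pair $\bigl(\frac{2}{2-q},\frac{2}{q}\bigr)$ to deduce that $\int|\nabla u_j|^q$ is controlled whenever $q\le 2n/[2(n-1)-b(n-2)]$; letting $b\nearrow 1/2$ saturates the right-hand side at $\frac{4n}{3n-2}$, yielding $\|u_j\|_{W^{1,q}}\le C_q$ for every $q<\frac{4n}{3n-2}$.

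\textbf{Lower bound and passage to the limit.} The hypothesis \eqref{UnifIntegrabilityVolume} forces the measures $u_j^{2n/(n-2)}\, d\mu_{g_{\Sph^n}}$ to be uniformly absolutely continuous with respect to $d\mu_{g_{\Sph^n}}$. A pigeonhole argument over a fixed finite cover of $\Sph^n$ by balls of radius $r_0$ produces $x_j\in\Sph^n$ with $\int_{B_{r_0}(x_j)} u_j^{2n/(n-2)}\ge c_0$, and \eqref{UnifIntegrabilityVolume} then rules out concentration on a small subset, giving $\int_{B_{r_0}(x_j)} u_j^{p_0}\ge c_0'$ for the weak-Harnack exponent $p_0>0$. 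Applying the weak Harnack inequality to the nonnegative supersolution $u_j$ of $-\Delta u + \frac{n(n-2)}{4}u = 0$ yields $\inf_{B_{r_0/2}(x_j)} u_j\ge c_0''$, and a finite Harnack chain across overlapping balls extends this to a universal lower bound $\underline{c}>0$ on all of $\Sph^n$. By reflexivity and Rellich--Kondrachov a subsequence satisfies $u_j\rightharpoonup u_\infty$ weakly in $W^{1,q}$ and strongly in $L^s$ for $s<\frac{4n}{3(n-2)}$, which interpolation against the uniform $L^{2n/(n-2)}$ bound upgrades to $L^p$ for every $p<\frac{2n}{n-2}$. Taking $u_\infty$ to be its lower semicontinuous envelope gives $u_\infty\in(0,\infty]$ pointwise-defined with $u_\infty\ge\underline{c}$; setting $f_\infty := \frac{2}{n-2}\log u_\infty$ and using $|\log u_j|\le u_j + |\log\underline{c}|$ together with Vitali's theorem transfers the $L^p$ convergence to $f_j\to f_\infty$.

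\textbf{Exceptional set and main obstacle.} Following the method of C.~Dong \cite{Dong-PMT_Stability}, I would construct $Z_j$ by a coarea/pigeonhole on $h_j:=|f_j-f_\infty|^2$: pick $C_j\searrow 0$ slowly enough that $\|f_j-f_\infty\|_{L^2}^2/C_j\to 0$, so Chebyshev gives $\Vol_{g_{\Sph^n}}(\{h_j>C_j/2\})\to 0$; since the test-function identity at $p=-1$ gives $\|\nabla f_j\|_{L^2}^2 \le C\int u_j^{-2}|\nabla u_j|^2 \le C'$, the coarea formula applied to $h_j$ forces the existence of a level $t_j\in[C_j/2,C_j]$ at which $\Area_{g_{\Sph^n}}(\partial\{h_j>t_j\})$ is arbitrarily small. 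Setting $Z_j := \{h_j>t_j\}$ delivers the three asserted properties of $Z_j$, and the volume convergence $\Vol_{g_j}(\Sph^n\setminus Z_j)\to\Vol_{g_\infty}(\Sph^n)$ follows from Vitali upgrading $L^{(2n/(n-2))-\epsilon}$ convergence to $L^1$-convergence of $u_j^{2n/(n-2)}$, together with \eqref{UnifIntegrabilityVolume} forcing $\int_{Z_j} u_j^{2n/(n-2)}\to 0$. The hardest step is the sharp exponent $\frac{4n}{3n-2}$ in the Sobolev bound: a naive argument through an $L^1$-Laplacian estimate and standard elliptic regularity only yields $W^{1,n/(n-1)}$, which is strictly weaker, so one really needs the precise negative-power identity and the H\"older optimization $b\nearrow 1/2$; the other delicate point is the Harnack chain in Step~3, which requires the quantitative no-concentration bound to be uniform in $j$ both for locating the base ball and for converting the $L^{2n/(n-2)}$ mass there into the $L^{p_0}$ mass that weak Harnack demands.
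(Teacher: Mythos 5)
Your Sobolev-bound step and your exceptional-set construction match the paper's approach almost exactly (the paper tests the scalar-curvature inequality against $e^{pf_j}$ rather than your $u_j^{p}$, but with $b=(p+1)/2$ playing the role of the paper's parameter $\theta$ the two H\"older optimizations are identical and both saturate at $q\to\frac{4n}{3n-2}$); the Chebyshev/coarea argument for $Z_j$ is likewise the same. Where you genuinely diverge is in the uniform positive lower bound: you propose a weak Harnack inequality on a base ball located by pigeonhole, promoted to a Harnack chain on all of $\Sph^n$, whereas the paper first establishes a dichotomy for $u_\infty$ (either $\operatorname{essinf} u_\infty>0$ or $u_\infty\equiv 0$) via truncations $\bar u^K$, the strong maximum principle for $W^{1,2}$ weak supersolutions, and the uniform integrability hypothesis, and then transfers the lower bound back to $u_j$ using a spherical mean inequality. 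Your Harnack-chain argument is a legitimate alternative and arguably more direct — the key point you need (and have sketched, if tersely) is that the no-concentration bound \eqref{UnifIntegrabilityVolume} lets you truncate $u_j$ at a height $M$ independent of $j$ and thereby convert the $L^{2n/(n-2)}$ mass lower bound on a ball into the $L^{p_0}$ mass lower bound demanded by Theorem~8.18 of Gilbarg--Trudinger.

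The gap is in the claim that ``taking $u_\infty$ to be its lower semicontinuous envelope gives $u_\infty\in(0,\infty]$ pointwise-defined'' and lsc. This is not a proof of the theorem's statement; it is a relabelling. An $L^p$ function does not have a canonical lsc envelope: changing the representative on a null set changes the envelope, and there is no reason the envelope you choose would agree a.e.\ with the weak/strong limit you just produced, nor that it would satisfy the Bishop--Gromov-style ball-average monotonicity that makes the pointwise values meaningful. Moreover, you cannot simply appeal to the standard theory of lsc representatives for $W^{1,2}$ supersolutions, because under the hypotheses of this theorem $u_\infty$ is only in $W^{1,q}$ for $q<\frac{4n}{3n-2}<2$ (the $W^{1,2}$ bound requires the extra total-scalar-curvature assumption of Theorem~\ref{thm-MainTheorem2}). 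The paper circumvents this precisely through the spherical mean method of Section~\ref{sec:Spherical Mean Method}: the scalar-curvature inequality $\Delta u_j\le \frac{n(n-2)}{4}u_j$ yields, via Lemmas~\ref{lem: spherical mean inequality}, \ref{lem: spherical mean inequality high dim}, and \ref{lem: spherical mean inequality of f}, a monotonicity of $r\mapsto\dashint_{B_r(x)}(u-Kr)$ that passes to the $L^1$ limit, and then $u_\infty(x)$ is \emph{defined} as the (monotone) limit of these ball averages, which is automatically a supremum of continuous functions and hence lsc (Propositions~\ref{prop: lower semicontinuous u}, \ref{prop: lower semi-continuity of u high dim}, \ref{prop: lower semicontinuous f}). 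Without an argument of this type — or an explicit invocation of a precise-representative theorem applicable in $W^{1,q}$, $q<2$ — the lower semicontinuity and everywhere-definedness assertions, for both $u_\infty$ and $f_\infty$, remain unestablished in your proposal.
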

\begin{rmrk}
{\rm
    Notice that if $e^{f_{\infty}}$ is bounded above then lower semicontinuity implies that $(\Sph^n,e^{2f_{\infty}}g_{\Sph^n})$ is a well defined length space. See the discussion in the beginning of section \ref{sec:Spherical Mean Method} for further details.
}
\end{rmrk}

Under a bound on the total scalar curvature we are able to show stronger notions of convergence and that the limiting conformal factor is bounded away from zero and has weak positive scalar curvature in the  sense of Definition \ref{def-Weak Positive Scalar Confromal} which implies that the conformal factor weakly solves the positive scalar curvature equation. 
\begin{thm}\label{thm-MainTheorem2}
    Let $V, \Lambda, R_0 > 0$, $ n\geq 3$,  $(\Sph^n,g_{\Sph^n})$ be the standard round sphere, and $f_j:\Sph^n\rightarrow \R$ a sequence of functions defining conformal metrics $g_j=e^{2f_j}g_{\Sph^n}$. If 
    \begin{align}
        \Sc_{g_j} &\ge 0, \qquad \int_{\Sph^n} \Sc_{g_j} dV_{g_j} \le R_0, \qquad V^{-1} \le \Vol_{g_j}(\Sph^n) \le V, \qquad
        \\\Vol_{g_j}(U) &\le  \Lambda \Vol_{g_0}(U)^{\alpha}, \alpha \in (0,1), \quad \forall U \subset \Sph^n \text{ measurable}, \label{UnifIntegrabilityVolume}
    \end{align}
    then there exists a subsequence, and a limiting  function $e^{\frac{(n-2)f_{\infty}}{2}} \in W^{1,q}$ for $1\le q< \frac{4n}{3n-2}$ which is bounded away from zero. 
   Furthermore, one obtains weak $W^{1,2}$ convergence of $e^{\frac{(n-2)f_j}{2}}$ to $e^{\frac{(n-2)f_{\infty}}{2}}$, $Z_j$ can be chosen as in Theorem \ref{thm-MainTheorem1} with the stronger condition that $\left|e^{\frac{(n-2)f_j}{2}}-e^{\frac{(n-2)f_{\infty}}{2}}\right|^2 \le C_j$, $C_j \searrow 0$, on $\Sph^n \setminus Z_j$, and $(\Sph^n,e^{2f_{\infty}}g_{\Sph^n})$ has weak positive scalar curvature in the sense of Definition \ref{def-Weak Positive Scalar Confromal}.
\end{thm}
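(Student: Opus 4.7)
The plan is to exploit the conformal scalar curvature equation together with the new bound $\int \Sc_{g_j}\, dV_{g_j} \le R_0$ to promote the conclusions of Theorem \ref{thm-MainTheorem1} to the stronger statements claimed here. Writing $u_j := e^{(n-2)f_j/2}$ and noting that $g_j = u_j^{4/(n-2)} g_{\Sph^n}$, the classical conformal Laplacian identity gives
\begin{equation*}
-\frac{4(n-1)}{n-2}\Delta_{g_{\Sph^n}} u_j + n(n-1)\, u_j = \Sc_{g_j}\, u_j^{\frac{n+2}{n-2}}.
\end{equation*}
All the baseline compactness (weak $W^{1,q}$ for $q<\frac{4n}{3n-2}$, strong $L^p$ convergence $u_j\to u_\infty$ for $p<\frac{2n}{n-2}$, the excision set $Z_j$ of small volume and boundary area on which $|f_j-f_\infty|^2\le C_j$, and a uniform lower bound on $u_j$) is already supplied by Theorem \ref{thm-MainTheorem1}.

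For the upgraded $W^{1,2}$ convergence, I multiply the conformal equation by $u_j$ and integrate over $\Sph^n$ to obtain the energy identity
\begin{equation*}
\frac{4(n-1)}{n-2}\int_{\Sph^n}|\nabla u_j|^2\, dV_{g_{\Sph^n}} + n(n-1)\int_{\Sph^n}u_j^2\, dV_{g_{\Sph^n}} = \int_{\Sph^n}\Sc_{g_j}\, dV_{g_j} \le R_0.
\end{equation*}
This yields a uniform $W^{1,2}$ bound on $u_j$; extracting a subsequence and using uniqueness of distributional limits, the weak $W^{1,2}$ limit must coincide with $u_\infty$. For the weak PSC statement, I test the conformal equation against an arbitrary nonnegative $\varphi\in C^\infty(\Sph^n)$; since $\Sc_{g_j}\ge 0$,
\begin{equation*}
\int_{\Sph^n}\left(\frac{4(n-1)}{n-2}\nabla u_j\cdot\nabla\varphi + n(n-1)u_j\varphi\right)dV_{g_{\Sph^n}} \ge 0.
\end{equation*}
Weak $W^{1,2}$ convergence of $u_j$ handles the gradient term and strong $L^2$ convergence handles the zeroth-order term, so the same inequality holds with $u_\infty$ in place of $u_j$, which is precisely the weak conformal PSC condition of Definition \ref{def-Weak Positive Scalar Confromal}.

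The last step is to promote the pointwise estimate from $f_j$ to $u_j$. From $|f_j-f_\infty|\le\sqrt{C_j}$ on $\Sph^n\setminus Z_j$, the mean value theorem gives $|u_j-u_\infty|\le \tfrac{n-2}{2}\max(u_j,u_\infty)|f_j-f_\infty|$, so the missing ingredient is a uniform upper bound on $u_\infty$ (equivalently, on $u_j$, since $u_j\le u_\infty e^{(n-2)\sqrt{C_j}/2}$). I plan to enlarge the excision set to $\tilde Z_j := Z_j\cup\{u_\infty > M_j\}$ with $M_j\to\infty$ chosen slowly enough that $M_j\sqrt{C_j}\to 0$. The Sobolev embedding $W^{1,2}\hookrightarrow L^{\frac{2n}{n-2}}$ combined with Chebyshev controls $\Vol_{g_{\Sph^n}}(\{u_\infty>M_j\})$, while the coarea formula applied to $u_\infty\in W^{1,2}\subset W^{1,1}$ produces regular values $M_j$ for which $\Area_{g_{\Sph^n}}(\{u_\infty=M_j\})$ is small. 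On $\Sph^n\setminus\tilde Z_j$ one then gets $|u_j-u_\infty|^2\lesssim M_j^2 C_j\to 0$, as required. The main obstacle is the simultaneous reconciliation of three competing rates --- the vanishing of $C_j$, the growth of $M_j$, and the vanishing volume and boundary area of $\tilde Z_j$ --- which forces a careful diagonal choice of $M_j$ along the subsequence; here the quantitative $L^{\frac{2n}{n-2}}$ Sobolev embedding is essential to rule out concentration of $u_\infty$ at $+\infty$, a possibility that was genuinely open in the weaker setting of Theorem \ref{thm-MainTheorem1}.
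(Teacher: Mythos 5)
Your first two steps reproduce the paper's argument: the energy identity you derive by testing the conformal Laplacian against $u_j$ is exactly the identity obtained in the paper by expanding $\int_{\Sph^n}\Sc_{g_j}\,dV_{g_j}$ via \eqref{eq:ConformalScalarFormula} and integrating by parts, giving the uniform $W^{1,2}$ bound on $u_j=e^{(n-2)f_j/2}$; and the weak-PSC passage to the limit is likewise the same (test the inequality $-\frac{4(n-1)}{n-2}\Delta u_j+n(n-1)u_j\ge 0$ against $\varphi\ge 0$, then use weak $W^{1,2}$ and strong $L^2$ convergence).

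Where you deviate is in producing the excision set with the stronger estimate $|u_j-u_\infty|^2\le C_j$. You try to recycle the set $Z_j$ of Theorem \ref{thm-MainTheorem1} (built from level sets of $|f_j-f_\infty|^2$) and enlarge it to $\tilde Z_j=Z_j\cup\{u_\infty>M_j\}$, then calibrate three competing rates --- $C_j\searrow 0$, $M_j\nearrow\infty$, $\Area(\{u_\infty=M_j\})\to 0$ --- via the coarea formula for $u_\infty$ and Chebyshev. This does work (taking $M_j=C_j^{-1/4}$ and averaging over a window of length $C_j^{-1/4}$ to find a good level), but it is an unnecessary detour. The paper instead \emph{re-runs} the Dong-style excision argument directly on $u_j$: set $E_j^{\tau}=\{x:|u_j(x)-u_\infty(x)|^2\le\tau\}$ and apply the BV coarea formula to $|u_j-u_\infty|^2$, giving $\int_{\varepsilon_1}^{\varepsilon_2}\mathcal{H}^{n-1}_{g_{\Sph^n}}(\partial^*(E_j^\tau)^c)\,d\tau\le 2\|u_j-u_\infty\|_{L^2}\bigl(\|\nabla u_j\|_{L^2}+\|\nabla u_\infty\|_{L^2}\bigr)=:C(j)\to 0$. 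The new hypothesis enters precisely because the right-hand gradient factor is now uniformly bounded, which was unavailable in Theorem \ref{thm-MainTheorem1}; the remaining selection of $\tau_j\in[\tfrac12\sqrt{C(j)},\sqrt{C(j)}]$, Chebyshev for $\Vol_{g_{\Sph^n}}$, and uniform integrability for $\Vol_{g_j}$ are verbatim as in the proof of Theorem \ref{thm-MainTheorem1}. This direct route dispenses with the rate-balancing and the need for a uniform upper bound on $u_\infty$ (which you correctly identify as the obstruction to transferring the $f_j$-estimate, but the paper sidesteps it entirely). In short: same analytic inputs, but the paper gets the singular set in one coarea pass rather than by modifying the set from Theorem \ref{thm-MainTheorem1}.
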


 We are able to show that the limiting function $e^{\frac{(n-2)f_\infty}{2}}$ is either positive or zero by looking at truncated solutions which weakly solve an elliptic partial differential equation and applying the $W^{1,2}$ maximum principle. Then we are able to use the volume lower bound and the uniform integrability of volume \eqref{UnifIntegrabilityVolume} to show that the limiting function cannot be identically zero in subsection \ref{subsect: positive lower bound}. In \cite{Allen-conf-torus}, the conformal case of stability of the Geroch conjecture was studied and the uniform integrability of the volume was also implemented as an assumption to remove the possibility of bubbling. In the conformal case, bubbling appears when volume is allowed to concentrate at a point, causing the sequence of volume forms, $e^{nf_j}$, to not be compact in $L^1$. By adding the uniform integrability condition for volume \eqref{UnifIntegrabilityVolume} we obtain a uniform integrability condition for the volume forms, $e^{nf_j}$, which implies that they are compact in $L^1$ and that volume cannot concentrate at a point. It is important to note that when volume concentrates at a point, the geometry of the limiting metric space is two different Riemmanian manifolds attached at the blowup point, as has been explicitly studied in Example 3.5 by B. Allen and C. Sormani \cite{AS-relating}.

 Then the positive infimum of the limiting function $e^{\frac{(n-2)f_\infty}{2}}$ is obtained in Proposition \ref{prop: positive lower bound}, providing a uniform positive lower bound for the sequence of conformal factors $e^{\frac{(n-2)f_j}{2}}$ in Proposition \ref{prop: uniform positive lower bound}, with the help of the spherical mean inequalities established in section \ref{sec:Spherical Mean Method}. While the spherical mean inequality for the conformal factor $u$ in Lemma \ref{lem: spherical mean inequality} can only be established in dimensions $3 \leq n \leq 4$ due to an elementary inequality (see Remark \ref{rmrk: elementary inequality}), the spherical mean inequality for $u^{\frac{2}{n-2}}$ can be derived in dimensions of $n \geq 5$ in Lemma \ref{lem: spherical mean inequality high dim}. By Lemma \ref{Lem-Superharmonic}, note that $5$ is the lowest dimension in which the spherical mean inequality for $u^{\frac{2}{n-2}}$ can be established. Moreover, from these spherical mean inequalities, by using a similar argument as in the proof of the Bishop-Gromov volume comparison theorem, the ball average monotonicity properties are obtained in Lemmas \ref{LemmaBishopGromov} and \ref{lem: ball average inequality high dim}. Consequently, we are able to show that the limiting function $e^{\frac{(n-2)f_\infty}{2}}$ is lower semi-continuous in Proposition \ref{prop: lower semi-continuity of u high dim}. The spherical mean argument is also obtained for the sequence of logs of the conformal factors $f_j = \frac{2}{n-2}\ln u_j$ in subsection \ref{subsect: spherical mean inequality for f}, and we obtain semi-continuity of $f_\infty$ as stated in Theorem \ref{thm-MainTheorem1}.

In Theorem \ref{thm-MainTheorem1} we see that without additional assumptions we are only able to show that the log of the conformal factors converge uniformly outside a singular set and we are unable to show that the limit has positive scalar curvature in a weak sense. When we add that the total scalar curvature is bounded in Theorem \ref{thm-MainTheorem2} then we are able to conclude weak $W^{1,2}$ convergence of $e^{\frac{(n-2)f_j}{2}}$ which is enough to show uniform convergence of this power of the conformal factor away from a singular set of small volume and boundary area. Furthermore we are able to show that the limiting conformal metric %is bounded away from zero and 
has positive scalar curvature in the sense of weakly solving the conformal positive scalar curvature equation defined in Definition \ref{def-Weak Positive Scalar Confromal}. %In addition we are able to show that the limiting function is lower semicontinuous and defined everywhere by a spherical mean method argument and a Brishop-Gromov style Lemma in section \ref{sec:Spherical Mean Method}. 
In the case where the limiting function is bounded, the lower semi-continuity of the limiting function in Proposition \ref{prop: lower semi-continuity of u high dim} implies that the limiting conformal metric defines a $C^0$ Riemannian manifold with weak positive scalar curvature.

\noindent
{\bf Acknowledgements:} 

Changliang Wang was partially supported by the Fundamental Research Funds for the Central Universities and Shanghai Pilot Program for Basic Research.

%%%%%%%%%%%%%%%%%%%%%%%%%%%%%%%%%%%%%%%%%%%

\section{Regularity of Solutions} \label{sec: Regularity of Solutions}

We start by reviewing the formula for scalar curvature under a conformal change as well as develop our first regularity consequences.

\begin{lem}\label{lem:ScalarFormulas}
    Let $(\Sph^n,g_{\Sph^n})$ be the round sphere. If the conformal metric $g=e^{2f}g_{\Sph^n}$ has nonnegative scalar curvature, i.e. \begin{align}\label{eq:ConformalScalarFormula}
        \Sc_{g} = e^{-2f} \left(\Sc_{g_{\Sph^n}}-2(n-1)\Delta f - (n-2)(n-1)|\nabla f|^2\right)  \geq 0,
    \end{align}
    where $f$ is a smooth function on $\Sph^n$,
    then $f$ satisfies \begin{align}\label{eq:ScalarLemmaFormula}
      \int_{\Sph^n}|\nabla f|^2dV_{g_{\Sph^n}}&\le \frac{n\Vol_{g_{\Sph^n}}(\Sph^n)}{n-2}.
    \end{align}
\end{lem}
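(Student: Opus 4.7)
The plan is to integrate the pointwise scalar curvature inequality \eqref{eq:ConformalScalarFormula} over $\Sph^n$ and exploit the fact that the divergence term drops out on a closed manifold.

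First, I would use that $\Sc_{g_{\Sph^n}} = n(n-1)$, which is constant, so the hypothesis $\Sc_g \geq 0$ together with \eqref{eq:ConformalScalarFormula} (and $e^{-2f} > 0$) yields the pointwise inequality
\begin{equation*}
n(n-1) - 2(n-1)\Delta f - (n-2)(n-1)|\nabla f|^2 \geq 0.
\end{equation*}
Dividing by the positive constant $n-1$ gives
\begin{equation*}
n - 2\Delta f - (n-2)|\nabla f|^2 \geq 0
\end{equation*}
pointwise on $\Sph^n$.

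Next, I would integrate this inequality against $dV_{g_{\Sph^n}}$ over the closed manifold $\Sph^n$. Since $f$ is smooth and $\Sph^n$ has no boundary, the divergence theorem gives $\int_{\Sph^n} \Delta f \, dV_{g_{\Sph^n}} = 0$, so the middle term vanishes. Thus we obtain
\begin{equation*}
n \Vol_{g_{\Sph^n}}(\Sph^n) - (n-2) \int_{\Sph^n} |\nabla f|^2 \, dV_{g_{\Sph^n}} \geq 0,
\end{equation*}
and rearranging (using $n \geq 3$ so that $n-2 > 0$) yields the desired bound \eqref{eq:ScalarLemmaFormula}.

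There is essentially no obstacle here; the content of the lemma is simply the observation that the \emph{linear} term in $\Delta f$ integrates to zero on a closed manifold, converting the pointwise PDI into an a priori $W^{1,2}$ bound on $f$. The smoothness hypothesis guarantees the integration by parts is valid without any care; in the later applications to sequences, one would either invoke this lemma for the smooth representatives and pass to the limit, or re-prove it in a distributional form.
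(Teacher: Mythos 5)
Your proof is correct and follows essentially the same route as the paper: rearrange the pointwise conformal scalar curvature inequality, integrate over the closed sphere, and observe that $\int_{\Sph^n}\Delta f\, dV_{g_{\Sph^n}}=0$ to obtain the gradient bound.
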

\begin{proof}
Recall the scalar curvature formula of conformally changed metric as:    \begin{align}\label{eq:ScalarLemmaeq1}
        \Sc_{g} = e^{-2f} \left(\Sc_{g_{\Sph^n}}-2(n-1)\Delta f - (n-2)(n-1)|\nabla f|^2\right) &\ge 0, 
    \end{align}
    and so by rearranging \eqref{eq:ScalarLemmaeq1} we see
    \begin{align}\label{eq:ScalarLemmaeq2}
      2\Delta f +(n-2)|\nabla f|^2&\le \frac{\Sc_{g_{\Sph^n}}}{n-1}.
    \end{align}
    Now by integrating \eqref{eq:ScalarLemmaeq2} we find
     \begin{align}\label{eq:ScalarLemmaeq3}
      \int_{\Sph^n}|\nabla f|^2dV_{g_{\Sph^n}}&\le \int_{\Sph^n}\frac{\Sc_{g_{\Sph^n}}}{(n-1)(n-2)}dV_{g_{\Sph^n}} = \frac{n \Vol(\Sph^n)}{n-2}.
    \end{align}
\end{proof}

Now our goal is to improve on the regularity of the results in Lemma \ref{lem:ScalarFormulas}. 
\bl\label{lem: gradient L2 bound of f}
 Let $(\Sph^n,g_{\Sph^n})$ be the standard round sphere. Assume that $g :=e^{2f}g_{\Sph^n}$ is a conformal Riemannian metric on $\Sph^n$ with nonnegative scalar curvature, such that the volume $\vol(\Sph^n, g)\leq V$ for some $V>0$. Then for any $p\in [0, \frac{n-2}{2})$
\be
\int_{\Sph^n} e^{pf} |\nabla f|^2dV_{g_{\Sph^n}}\leq \frac{nV^{p/n} \Vol(\Sph^n)^{(n-p)/n}}{n-2-2p}.
\ee
\el

\begin{proof}
Since 
\be
e^{2f} \Sc_g =\Sc_{g_{\Sph^n}}-2(n-1)\Delta f-(n-1)(n-2)|\nabla f|^2,
\ee
with $\Sc_{g}\geq 0$ and $\Sc_{g_{\Sph^n}}=n(n-1)$, we have the inequality:

\be
n(n-1) \geq 2(n-1)\Delta f+(n-1)(n-2)|\nabla f|^2.
\ee
For any $p\in \left[0, \frac{n-2}{2}\right)$, multiply the inequality by $e^{pf}$, then, after integration by parts we get
\be
\int_{\Sph^n}n e^{pf}dV_{g_{\Sph^n}}\geq (n-2-2p) \int_{\Sph^n} e^{pf}|\nabla f|^2dV_{g_{\Sph^n}}.
\ee
By H\"older's inequality we have
\be
\int_{\Sph^n} e^{pf}dV_{g_{\Sph^n}}\leq \left(\int_{\Sph^n} e^{nf} dV_{g_{\Sph^n}}\right)^{p/n} \Vol(\Sph^n)^{(n-p)/n}.
\ee
Combine these inequalities then we have the desired result.
\end{proof}

The previous lemma leads to another important regularity result for a power of the conformal factor.

\begin{lem}\label{lem: improved weak bound}
 Let $(\Sph^n,g_{\Sph^n})$ be the round sphere. Let $g:=e^{2f}g_{\Sph^n}$ be a metric on $\Sph^n$ with nonnegative scalar curvature, such that $\vol(\Sph^n, g)\leq V$ for some $V>0$. Then for any $q \in \left[1, \frac{n}{n-1} \right]$, there exists a constant $C(g_{\Sph^n}, q)$ such that
\begin{align}
    \int_{\Sph^n} |\nabla e^{(n-2)f/2}|^q dV_{g_{\Sph^n}} \le C(g_{\Sph^n}, q).
\end{align}
\end{lem}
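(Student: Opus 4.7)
The plan is to rewrite $\nabla e^{(n-2)f/2}$ in terms of $\nabla f$ and then apply Hölder's inequality to interpolate between the $L^2$ gradient bound on $f$ from Lemma \ref{lem:ScalarFormulas} and the volume bound $\int_{\Sph^n} e^{nf} dV_{g_{\Sph^n}} = \vol(\Sph^n, g) \leq V$. A direct computation gives
\begin{equation*}
\left|\nabla e^{(n-2)f/2}\right|^q = \left(\frac{n-2}{2}\right)^q e^{q(n-2)f/2} |\nabla f|^q,
\end{equation*}
so the task reduces to controlling the weighted integral $\int_{\Sph^n} e^{q(n-2)f/2}|\nabla f|^q dV_{g_{\Sph^n}}$.

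Since $\Sph^n$ has finite volume, by Hölder's inequality it suffices to treat the endpoint $q = \frac{n}{n-1}$, as all smaller exponents then follow. At this endpoint $2-q = \frac{n-2}{n-1} > 0$, so I would apply Hölder's inequality with conjugate exponents $\frac{2}{q}$ and $\frac{2}{2-q}$ to the factorization $e^{q(n-2)f/2}|\nabla f|^q = |\nabla f|^q \cdot e^{q(n-2)f/2}$, yielding
\begin{equation*}
\int_{\Sph^n} e^{q(n-2)f/2}|\nabla f|^q dV_{g_{\Sph^n}} \leq \left(\int_{\Sph^n}|\nabla f|^2 dV_{g_{\Sph^n}}\right)^{q/2}\left(\int_{\Sph^n} e^{q(n-2)f/(2-q)}dV_{g_{\Sph^n}}\right)^{(2-q)/2}.
\end{equation*}

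The key algebraic observation, which is exactly what singles out the exponent $\frac{n}{n-1}$, is that
\begin{equation*}
\frac{q(n-2)}{2-q} = \frac{n(n-2)/(n-1)}{(n-2)/(n-1)} = n,
\end{equation*}
so the second factor is precisely $\vol(\Sph^n,g)^{(2-q)/2} \leq V^{(2-q)/2}$. The first factor is bounded by Lemma \ref{lem:ScalarFormulas}, which gives $\int_{\Sph^n}|\nabla f|^2 dV_{g_{\Sph^n}} \leq \frac{n\Vol_{g_{\Sph^n}}(\Sph^n)}{n-2}$. Combining these estimates and restoring the constant $\left(\frac{n-2}{2}\right)^q$ produces the claimed bound with a constant depending only on $n$, $V$, and $\Vol_{g_{\Sph^n}}(\Sph^n)$.

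There is no substantial obstacle: the argument is a single Hölder interpolation, and the upper endpoint $q = \frac{n}{n-1}$ is dictated by the requirement that the weight exponent $\frac{q(n-2)}{2-q}$ saturate the available volume integrability $\int e^{nf} dV_{g_{\Sph^n}} \leq V$. The only small thing to verify is that this conjugate exponent does not exceed $n$ for $q \leq \frac{n}{n-1}$, which is immediate since the map $q \mapsto \frac{q(n-2)}{2-q}$ is increasing on $[0,2)$.
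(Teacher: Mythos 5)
Your proof is correct and takes essentially the same route as the paper: rewrite $\nabla e^{(n-2)f/2}$ in terms of $e^{(n-2)f/2}\nabla f$, then apply H\"older with conjugate exponents $2/q$ and $2/(2-q)$, bounding the two factors by Lemma \ref{lem:ScalarFormulas} and the volume hypothesis respectively, with the endpoint $q = n/(n-1)$ being exactly where $\frac{q(n-2)}{2-q} = n$. The only cosmetic difference is that you reduce to the endpoint via finite volume while the paper runs the H\"older estimate uniformly over $q \in [1, n/(n-1)]$ and observes $\frac{q(n-2)}{2-q} \le n$; both are fine, and you correctly note the constant must depend on $V$ (the paper's statement omits $V$ from the constant, apparently a typo, since the improved Proposition \ref{prop:Improved weak bound} does include it).
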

\begin{proof}
For any $q \in \left[1, \frac{n}{n-1}\right]$, by H\"older inequality, we have
\begin{eqnarray}
\int_{\Sph^n} \left| \nabla \left( e^{\frac{n-2}{2}f}\right) \right|^q dV_{g_{\Sph^n}}
& = & \int_{\Sph^n} e^{q \frac{n-2}{2}f} |\nabla f|^q dV_{g_{\Sph^n}}\\
& \leq & \left( \int_{\Sph^n} |\nabla f|^2 dV_{g_{\Sph^n}} \right)^{\frac{q}{2}} \left( \int_{\Sph^n} e^{\frac{q (n-2)}{2-q}f} dV_{g_{\Sph^n}} \right)^{\frac{2-q}{2}}.
\end{eqnarray}
Note that the estimate in Lemma \ref{lem:ScalarFormulas} provides a upper bound for the first factor. Because for $q \in \left[1, \frac{n}{n-1} \right]$, $\frac{q(n-2)}{2-q} \leq n$, the assumption $\vol(\Sph^n, g) \leq V$ produces a upper bound for the second factor. This completes the proof.
\end{proof}

With a bit more effort we can improve on the regularity obtained in the previous lemma.

\begin{prop}\label{prop:Improved weak bound}
 Let $(\Sph^n,g_{\Sph^n})$ be the round sphere. Let $g:=e^{2f}g_{\Sph^n}$ be a metric on $\Sph^n$ with non-negative scalar curvature, such that $\vol(\Sph^n, g)\leq V$ for some $V>0$. Then for any $q \in \left[1, \frac{4n}{3n-2} \right)$, there exists a constant $C(g_{\Sph^n}, V, q)$ such that
\begin{align}
    \int_{\Sph^n} |\nabla e^{(n-2)f/2}|^q dV_{g_{\Sph^n}} \le C(g_{\Sph^n}, V, q).
\end{align}
\end{prop}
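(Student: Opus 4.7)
The plan is to bootstrap off the improved gradient estimate in Lemma \ref{lem: gradient L2 bound of f}, which gives a bound on $\int_{\Sph^n} e^{pf}|\nabla f|^2 dV_{g_{\Sph^n}}$ for every $p \in [0,(n-2)/2)$, by interpolating against the volume bound $\int_{\Sph^n} e^{nf}dV_{g_{\Sph^n}} = \Vol_g(\Sph^n) \leq V$ via H\"older. Since
\begin{equation*}
    \left|\nabla e^{(n-2)f/2}\right|^q = \left(\tfrac{n-2}{2}\right)^q e^{q(n-2)f/2}\, |\nabla f|^q,
\end{equation*}
the task reduces to bounding $\int_{\Sph^n} e^{q(n-2)f/2}|\nabla f|^q dV_{g_{\Sph^n}}$ for $q$ in the claimed range.

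For $p \in [0,(n-2)/2)$ and $q \in [1,2)$, I would split the integrand as
\begin{equation*}
    e^{q(n-2)f/2}|\nabla f|^q = \bigl(e^{pf}|\nabla f|^2\bigr)^{q/2}\, e^{\frac{q(n-2-p)}{2}f},
\end{equation*}
and apply H\"older's inequality with conjugate exponents $2/q$ and $2/(2-q)$ to obtain
\begin{equation*}
    \int_{\Sph^n} e^{q(n-2)f/2}|\nabla f|^q dV_{g_{\Sph^n}} \leq \left(\int_{\Sph^n} e^{pf}|\nabla f|^2 dV_{g_{\Sph^n}}\right)^{q/2} \left(\int_{\Sph^n} e^{\frac{q(n-2-p)}{2-q}f} dV_{g_{\Sph^n}}\right)^{(2-q)/2}.
\end{equation*}
Lemma \ref{lem: gradient L2 bound of f} handles the first factor, while the second factor is controlled by the volume bound together with H\"older's inequality provided the exponent satisfies $\frac{q(n-2-p)}{2-q} \leq n$.

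Rearranging the latter constraint gives $q \leq \frac{2n}{2n-2-p}$. Sending $p \nearrow (n-2)/2$ yields the supremum $q = \frac{4n}{3n-2}$, which matches the claimed range exactly. Thus for any prescribed $q \in [1, \frac{4n}{3n-2})$, one can pick $p \in [0,(n-2)/2)$ sufficiently close to $(n-2)/2$ so that both inequalities hold strictly, and the two factors combine to produce the desired bound $C(g_{\Sph^n},V,q)$.

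The only delicate point is to verify that the two constraints $p < (n-2)/2$ (needed to invoke Lemma \ref{lem: gradient L2 bound of f}) and $q \leq \frac{2n}{2n-2-p}$ are simultaneously achievable for every $q$ strictly below $\frac{4n}{3n-2}$. This is essentially an elementary optimization in $p$, and the sharpness of the range $[1,\frac{4n}{3n-2})$ reflects the fact that the endpoint is not attained precisely because $p = (n-2)/2$ is excluded from Lemma \ref{lem: gradient L2 bound of f}. No deeper ingredient beyond H\"older interpolation and the previously-established estimates is required.
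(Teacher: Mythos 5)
Your proof is correct and takes essentially the same approach as the paper: split the integrand as a product, apply H\"older with exponents $2/q$ and $2/(2-q)$, control one factor via Lemma \ref{lem: gradient L2 bound of f} and the other via the volume bound, and observe that the constraint on the exponents sweeps out exactly $q < \frac{4n}{3n-2}$ as $p \nearrow (n-2)/2$. The only cosmetic difference is that the paper parameterizes by $\theta \in (0,1/2)$ with $p = \theta(n-2)$ and handles $q \in [1, n/(n-1)]$ separately via Lemma \ref{lem: improved weak bound}, whereas you use $p$ directly and cover the full range in one stroke.
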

\begin{proof}
By Lemma \ref{lem: improved weak bound}, it suffices to derive the estimate for $q \in \left( \frac{n}{n-1}, \frac{4n}{3n-2} \right)$. Now for any such $q$ and for any $\theta \in (0,1/2)$ we have
\be
e^{q(n-2)f/2}|\nabla f|^q=e^{q(1-\theta)(n-2)f/2} e^{q\theta(n-2)f/2}|\nabla f|^q.
\ee
By H\"older's inequality, choose $t= 2/q$ and $t'= 2/(2-q)$, then we have
\begin{eqnarray}
& & \int_{\Sph^n}e^{q(n-2)f/2}|\nabla f|^qdV_{g_{\Sph^n}}  \\
& = & \int_{\Sph^n}e^{q(1-\theta)(n-2)f/2} e^{q\theta(n-2)f/2}|\nabla f|^qdV_{g_{\Sph^n}}\\
& \leq & \left( \int_{\Sph^n}e^{\frac{q(n-2)}{2-q}(1-\theta)f}dV_{g_{\Sph^n}} \right)^{\frac{2-q}{2}} \left( \int_{\Sph^n}e^{\theta(n-2)f}|\nabla f|^2dV_{g_{\Sph^n}} \right)^{\frac{q}{2}}.
\end{eqnarray}
Set $\frac{q(n-2)}{2-q}(1-\theta)=n$ and solve for $q$, then we have
\be
q=\frac{2n}{n+(n-2)(1-\theta)}.
\ee
Note that here $q$ is an increasing function of $\theta$ and as $\theta \to 1/2$ we have $q\to \frac{4n}{3n-2}$, and as $\theta \to 0$ we have $q \to \frac{n}{n-1}$. The assumption $\vol(\Sph^n, g) \leq V$ produces a upper bound for the first factor, and Lemma \ref{lem: gradient L2 bound of f} provides a upper bound for the second factor.
\end{proof}

As an immediate consequence we can obtain our first convergence result.

\begin{cor}\label{cor:W12WeakConvergence}
     Let $(\Sph^n,g_{\Sph^n})$ be the round sphere.  If $g_{j}=e^{2f_j}g_{\Sph^n}$, $f_j:\Sph^n \to \R$, $\Sc_{g_{j}} \ge 0$, and $\vol_{g_j}(\Sph^n) = \int_{\Sph^n}e^{nf_j}dV_{\Sph^n}\le C$, then for any $q \in \left[1, \frac{4n}{3n-2} \right)$,
    \begin{align}
        e^{\frac{(n-2)f_j}{2}} &\rightharpoonup  e^{\frac{(n-2)f_{\infty}}{2}}, \quad \text{ in }W^{1,q}
    \end{align}
    for some $e^{\frac{(n-2)f_{\infty}}{2}} \in W^{1,q}(\Sph^n,g_{\Sph^n})$ and
    \begin{align}
       e^{\frac{(n-2)f_j}{2}} &\rightarrow e^{\frac{(n-2)f_{\infty}}{2}}, \quad \text{ in }L^{p}, 
    \end{align}
    for $1 \le p < \frac{2n}{n-2}$.
\end{cor}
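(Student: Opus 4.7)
The plan is to assemble the corollary from the a priori bounds just proved plus standard functional-analytic tools (Banach--Alaoglu, Rellich--Kondrachov, and interpolation), so the work is primarily bookkeeping.

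First I would record that the volume hypothesis $\int_{\Sph^n} e^{nf_j}\,dV_{g_{\Sph^n}} \le C$ translates directly into an $L^{2n/(n-2)}$ bound for $u_j := e^{(n-2)f_j/2}$, since $u_j^{2n/(n-2)} = e^{n f_j}$. By Hölder's inequality this also yields uniform $L^q$ bounds on $u_j$ for every $q \le 2n/(n-2)$, in particular for every $q < 4n/(3n-2)$. Combining this with Proposition~\ref{prop:Improved weak bound}, which controls $\|\nabla u_j\|_{L^q}$ uniformly in $j$ for each $q \in [1, 4n/(3n-2))$, gives a uniform $W^{1,q}$ bound on $u_j$ in that range.

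Second, since $W^{1,q}(\Sph^n)$ is reflexive for $q > 1$, Banach--Alaoglu provides a subsequence converging weakly in $W^{1,q}$ to some limit; a standard diagonal argument over a sequence $q_k \nearrow 4n/(3n-2)$ yields a single subsequence converging weakly in $W^{1,q}$ for every $q < 4n/(3n-2)$, with a common limit which we name $e^{(n-2)f_\infty/2}$ (weak limits are consistent across different $q$, after identifying the dual pairings via a Riesz-type argument or simply by uniqueness of distributional limits).

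Third, the compact Sobolev embedding $W^{1,q}(\Sph^n) \hookrightarrow\!\hookrightarrow L^{p}(\Sph^n)$ for $p < nq/(n-q)$ upgrades the weak $W^{1,q}$ convergence to strong $L^{p}$ convergence; taking $q$ just below $4n/(3n-2)$ gives strong convergence in $L^{p}$ for $p$ up to $4n/(3(n-2))$, which in particular gives strong convergence in some fixed $L^{p_0}$ with $p_0 > 1$.

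Finally, to promote the strong convergence to the full range $1 \le p < 2n/(n-2)$, I would interpolate between this strong $L^{p_0}$ convergence and the uniform $L^{2n/(n-2)}$ bound. Specifically, for any $p$ with $p_0 \le p < 2n/(n-2)$, writing $1/p = \theta/p_0 + (1-\theta)/\tfrac{2n}{n-2}$ with $\theta \in (0,1]$, one has
\begin{equation*}
\|u_j - e^{(n-2)f_\infty/2}\|_{L^p} \le \|u_j - e^{(n-2)f_\infty/2}\|_{L^{p_0}}^{\theta}\,\|u_j - e^{(n-2)f_\infty/2}\|_{L^{2n/(n-2)}}^{1-\theta},
\end{equation*}
where the second factor is uniformly bounded (the limit lies in $L^{2n/(n-2)}$ by Fatou applied to a further a.e.-convergent subsequence), and the first factor tends to zero. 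The only real subtlety is the interpolation step to reach the sharp exponent $2n/(n-2)$; everything else is a direct invocation of the preceding lemmas.
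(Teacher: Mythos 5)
Your proposal is correct and follows essentially the same route as the paper: uniform $W^{1,q}$ bounds from Proposition~\ref{prop:Improved weak bound} together with the $L^{2n/(n-2)}$ bound coming from the volume hypothesis, weak $W^{1,q}$ compactness, Rellich--Kondrachov for strong convergence in some $L^{p_0}$, and then interpolation against the $L^{2n/(n-2)}$ bound to reach all $p<\tfrac{2n}{n-2}$. The only difference is that you spell out the diagonal extraction and the Fatou/interpolation steps that the paper leaves implicit.
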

\begin{proof}
    By Proposition \ref{prop:Improved weak bound} we know that for $q \in \left(1, \frac{4n}{3n-2} \right]$,
   \begin{align}
    \int_{\Sph^n} |\nabla e^{(n-2)f/2}|^q dV_{g_{\Sph^n}} \le C(g_{\Sph^n}, V, q).
\end{align}
    Now we calculate
    \begin{align}\label{eq:Intermediate Lp Bound}
        \int_{\Sph^n} \left(e^{\frac{(n-2)f_j}{2}}\right)^{\frac{4n}{3n-2}}dV_{g_{\Sph^n} }=\int_{\Sph^n} e^{\frac{2n(n-2)f_j}{3n-2}}dV_{g_{\Sph^n} },
    \end{align}
    and notice that 
    \begin{align}
      \frac{2n(n-2)}{3n-2} = n \frac{2n-4}{3n-2}<n,  
    \end{align}
    and so by assumption and H\"{o}lder's inequality we know that \eqref{eq:Intermediate Lp Bound} is bounded. This implies that a subsequence converges weakly in $W^{1,q}(\Sph^n,g_{\Sph^n} )$. By the Rellich-Kondrachov compactness theorem this implies convergence in $L^p$ for $\frac{4n}{3n-6}>p>1$. By the assumption that $e^{f_j}$ is bounded in $L^n$ we can use interpolation to find convergence in $L^p$ for $1\le p < \frac{2n}{n-2}$.
\end{proof}

We now investigate the notion of weak positive scalar curvature which is natural in the conformal case. In Lemma \ref{lem:ScalarFormulas} we observed that for a conformal Riemannian manifold $(\Sph^n,g_f=e^{2f}g_{\Sph^n})$, the scalar curvature formula yields
\be
e^{2f} \Sc_{g_f}=\Sc_{g_{\Sph^n}}-2(n-1)\Delta f-(n-2)(n-1)|\nabla f|^2.
\ee

If we take $e^{2f}=u^{\frac{4}{n-2}}$, or equivalently $u=e^{\frac{(n-2)f}{2}}$, then $u$ satisfies the equation
\be
-\frac{4(n-1)}{n-2}\Delta u+\Sc_{g_{\Sph^n}} u=\Sc_{g_f}u^{\frac{n+2}{n-2}}.
\ee
The scalar curvature $R_{g_f} \geq 0$ implies the inequality
\be\label{eq: Weak Poisitive Scalar Curvature}
-\frac{4(n-1)}{n-2}\Delta u+\Sc_{g_{\Sph^n}} u\geq 0.
\ee

This motivates a definition of weak scalar curvature in the conformal case.

\begin{defn}\label{def-Weak Positive Scalar Confromal}
    Assume $(M,g_0)$ is a Riemannian manifold. We say that $(M,e^{2f}g_0)$ has weak positive scalar curvature if $u=e^{\frac{(n-2)f}{2}} \in W^{1,2}(M,g_0)$ weakly solves the  inequality
    \be\label{eq: Weak Poisitive Scalar Curvature}
-\frac{4(n-1)}{n-2}\Delta u+\Sc_{g_0} u\geq 0,
\ee
which implies that for all nonnegative test functions $\varphi \in W^{1,2}(M,g_0)$ we have
\begin{align}
    - \int_M g_0(\nabla \varphi , \nabla u)dV_{g_0} \le \frac{n-2}{4(n-1)} \int_M \Sc_{g_0} u \varphi dV_{g_0}.
\end{align}
\end{defn}

One can check that this natural notion of weak positive scalar curvature for conformal Riemannian manifolds is not quite the same as the notion defined by Lee-LeFloch \cite{Lee-LeFloch} for general Riemmanian manifolds which we are unable to establish given the assumptions in our setting. 

%One can calculate that Lee-LeFloch's notion of nonnegativity of scalar curvature of $g_u = u^{\frac{4}{n-2}} g_0$ requires:
% \begin{equation}
% \int_{M} \left( \Sc_{g_0} u^2 \varphi + \frac{4(n-1)}{n-2} u g_0(\nabla \varphi, \nabla u) + \frac{4(n-1)}{(n-2)} \varphi |\nabla u|^2 \right) dV_{g_0}
% \geq 0.
% \end{equation}
% This needs the convergence of $W^{1, 2}$-norms of $u$, which we . I think the reason is that in Lee-LeFloch they basically require that the metric satisfies: 
% \begin{equation}
% \int_{M} \varphi \Sc_{g_u} dV_{g_u} \geq 0
% \end{equation}
% in a ``weak" sense (see Definition 2.1 in \cite{Lee-LeFloch}). Note that the measures in integrals are different. So by the notion of Lee-LeFloch one can investigate the convergence of total scalar curvature. Anyway, should we still say that these two weak notions are equivalent?

%%%%%%%%%%%%%%%%%%%%%%%%%%%%%%%%%%%%%%%%%%%%%%

\section{The Spherical Mean Method}\label{sec:Spherical Mean Method}

In this section we are able to apply the spherical mean method for the round sphere in order to extract even more information about the limiting conformal factor. In particular, we are able to show that the limiting conformal factor has to be lower semicontinuous and defined everywhere. Being lower semicontinuous and defined everywhere is important when combined with boundedness since it implies that the limiting conformal factor defines a well defined length space. To see this, first notice that if $u_{\infty}$ is lower semiconituous then so is $u_{\infty}^{\frac{2}{n-2}}$. Then notice that for any piecewise smooth curve $\gamma:[a,b]\rightarrow \Sph^n$, $u_{\infty}(\gamma)^{\frac{2}{n-2}}$ is also lower semicontinuous and hence measurable. Then since $u_{\infty}$ is assumed to be bounded we know that $u_{\infty}(\gamma)^{\frac{2}{n-2}}$ is integrable and hence the length of every curve is defined.  

Due to establishment of an elementary estimate in Lemma \ref{lem: elementary inequality}, we have to apply the spherical mean method on $\Sph^n$ for $3 \leq n \leq 4$ and $n \geq 5$ separately, see more explanations in Remark \ref{rmrk: elementary inequality}. In the case of the three-sphere and four-sphere, we prove a spherical mean inequality for the conformal factor $u$ in Subsection \ref{subsect: spherical mean lower dim}. In the case of higher dimensional spheres, we prove a spherical mean inequality for $u^{\frac{2}{n-2}}$ in Subsection \ref{subsect: spherical mean inequality high dimensions}. In Subsection \ref{subsect: spherical mean inequality for f}, we apply spherical mean method for $f = \frac{2}{n-2} \ln u$.

We now review some notation used throughout this section.
We use $B_r(x)$ to denote the geodesic ball in the round sphere $\Sph^n$ with radius $r$ and centered at $x$ and $\partial B_r(x)$ to denote its boundary. We use $d\sigma$ to denote the volume form on $\partial B_r(x)$ induced from the round metric on $\Sph^n$. Then in polar coordinates on $\Sph^n$ we have $g_{\Sph^n}=dr^2 +\sin^2 r g_{\Sph^{n-1}}$, $\Vol_{\Sph^n}(B_r(x))=\omega_{n-1} \int^r_0 \sin^{n-1}s ds$, and $d\sigma= \sin^{n-1} r dV_{g_{\Sph^{n-1}}}$. Recall that $\omega_n := \frac{2\pi^{\frac{n+1}{2}}}{\Gamma\left( \frac{n+1}{2} \right)}$ is the volume of $\Sph^n$ with the standard metric $g_{\Sph^n}$. 

% \bl\label{LemmaArea}
% The area of $\partial B_r(p)$ is $\omega_{n-1} \sin^{n-1} r$. The volume of $B_r(p)$ is equal  to $\omega_{n-1} \int^r_0 \sin^{n-1}s ds$.
% \el
%\begin{proof}
%\be
%\int_{\partial B_r(p)}d\sigma=\int_0^{2\pi}\int_0^\pi \sin^2 r \sin \theta  d\theta d\varphi= 4\pi \sin^2 r
%\ee
%\end{proof}

%\bl\label{LemmaVolume}
%We use $|B_r(p)|$ to denote the volume of the geodesic ball, then we have $|B_r(p)|=2\pi(r-\cos r \sin r)$, and as a consequence
%\[\vol(\Sph^3)=2\pi^2.\]
%\el
%\begin{proof}
%Following the previous lemma, we have
%\be
%|B_r(p)|=\int_0^r 4\pi \sin^2 s ds=2\pi(r-\cos r \sin r)
%\ee
%\end{proof}

\subsection{Spherical mean inequality for $u$ on the $n$-sphere ($n = 3, 4$)}\label{subsect: spherical mean lower dim}

In this subsection we apply the spherical mean method to $u$ which will yield interesting results in dimensions $n=3,4$. We start with a standard calculation for the spherical mean in terms of the Laplacian.

\bl\label{LemmaPhiDerivative}
For any fixed $x \in \Sph^n$, define
\be
\phi(r):= \dashint_{\partial B_r(x)} u d\sigma.
\ee
Then the derivative of $\phi$ with respect to $r$ is given by
\be
\phi'(r)=\frac{ \int_{B_r(x)} \Delta u dV_{g_{\Sph^n}}}{\omega_{n-1} \sin^{n-1} r}
\ee
\el
\begin{proof}
From the definition we have
\be
\phi(r)=\frac{\sin^{n-1}r \int_{\Sph^{n-1}} u(r, \theta) dV_{g_{\Sph^{n-1}}}(\theta)}{\omega_{n-1} \sin^{n-1}r}=\frac{\int_{\Sph^{n-1}} u(r, \theta) dV_{g_{\Sph^{n-1}}}(\theta)}{\omega_{n-1}}.
\ee
We can take derivative to get
\be
\begin{split}
\phi'(r) &=\frac{\int_{\Sph^{n-1}} \frac{\partial u}{\partial r}(r, \theta) dV_{g_{\Sph^{n-1}}}}{\omega_{n-1}}\\
&=\frac{\sin^{n-1}r \int_{\Sph^{n-1}} \frac{\partial u}{\partial r}(r, \theta) dV_{g_{\Sph^{n-1}}}}{\omega_{n-1} \sin^{n-1}r}\\
&=\frac{\int_{\partial B_r(x)} g_{\Sph^n}( \nabla u, \partial_r)  d\sigma}{\omega_{n-1} \sin^{n-1}r}\\
&=\frac{\int_{B_r(x)} \Delta u dV_{g_{\Sph^n}}}{\omega_{n-1} \sin^{n-1}r}.
\end{split}
\ee
where the last step follows from Stokes' theorem.
\end{proof}

We now prove an elementary lemma which shows us where the spherical mean inequality for $u$ breaks down in dimensions $5$ and higher.
\bl\label{LemmaCalculus}
For $r\in (0,\frac{\pi}{2})$, and $n \in \{3, 4\}$,
\be
\frac{\left( \int^{r}_0 \sin^{n-1}s ds \right)^{\frac{n+2}{2n}}}{\sin^{n-1} r} \leq \left( \frac{\pi}{2} \right)^{\frac{n+2}{2n}},
\ee
\el
\begin{proof}
By the mean value property we have
\be
\int^r_0 \sin^{n-1}s ds= r\sin^{n-1} \xi
\ee
for some $\xi\in (0,r)$. As a result, we have
\be
\frac{\left( \int^{r}_0 \sin^{n-1}s ds \right)^{\frac{n+2}{2n}}}{\sin^{n-1} r}
= \frac{\left( r \sin^{n-1} \xi \right)^{\frac{n+2}{2n}}}{\sin^{n-1}r} \leq 
\frac{\left( r \sin^{n-1} r \right)^{\frac{n+2}{2n}}}{\sin^{n-1}r}
\ee
For $n=3$, 
\be
\frac{\left( r \sin^{n-1} r \right)^{\frac{n+2}{2n}}}{\sin^{n-1}r}
=
\frac{\left( r \sin^2 r \right)^{\frac{5}{6}}}{\sin^{2}r} 
= \frac{r^{\frac{5}{6}}}{\sin^{\frac{1}{3}}r} = r^{\frac{1}{2}} \left( \frac{r}{\sin r}\right)^{\frac{1}{3}} \leq \left( \frac{\pi}{2} \right)^{\frac{5}{6}}.
\ee
The last inequality follows from the fact that $\frac{r}{\sin r}\leq \pi/2$ for $r\in (0,\pi/2)$. We can see this by taking derivative to see that $\frac{r}{\sin r}$ is increasing for $r\in (0,\pi/2)$. Then the conclusion for $n=3$ follows.

For $n=4$,
\be
\frac{\left( r \sin^{n-1} r \right)^{\frac{n+2}{2n}}}{\sin^{n-1}r}
= \frac{\left( r \sin^{3}r \right)^{\frac{3}{4}}}{\sin^{3}r}
= \left( \frac{r}{\sin r} \right)^{\frac{3}{4}} \leq \left( \frac{\pi}{2} \right)^{\frac{3}{4}}.
\ee
From this estimate the conclusion for $n=4$ follows.
 
\end{proof}
\begin{rmrk}\label{rmrk: elementary inequality}
{\rm
    Notice that for general $n$ we have
    \begin{align}
\frac{\left( \int^{r}_0 \sin^{n-1}s ds \right)^{\frac{n+2}{2n}}}{\sin^{n-1} r}
&= \frac{\left( r \sin^{n-1} \xi \right)^{\frac{n+2}{2n}}}{\sin^{n-1}r} 
\\&\leq 
\frac{\left( r \sin^{n-1} r \right)^{\frac{n+2}{2n}}}{\sin^{n-1}r}
\\&= 
\frac{ r^{\frac{n+2}{2n}}}{\sin^{\frac{n^2-3n+2}{2n}}r}= 
r^{\frac{n(4-n)}{2n}}\left(\frac{ r}{\sin r}\right)^{\frac{n^2-3n+2}{2n}},
\end{align}
and in order to keep the leftover power of $r$ positive we need $n \in \{3,4\}$.
}
\end{rmrk}
Now we can apply the inequality derived in Lemma \ref{LemmaCalculus} to obtain the following bound on $\phi'(r)$.

\bl\label{LemmaDerivativeSphericalMean}
Assume $u\in C^\infty(\Sph^n)$, $n \in \{3, 4\}$, $u\geq 0$ and $\Delta u\leq \frac{n(n-2)}{4}u$ then for $r\in \left(0,\frac{\pi}{2} \right)$ we have
\be
\phi'(r)
\leq 
\frac{n(n-2)}{4} \left( \frac{\pi}{2} \right)^{\frac{n+2}{2n}} \left( \omega_{n-1}\right)^{\frac{2-n}{2n}} \|u\|_{L^{\frac{2n}{n-2}}(\Sph^n)}.
\ee
\el

\begin{proof}
Following Lemma \ref{LemmaPhiDerivative}, we have 
\begin{align}
\phi'(r) 
&=\frac{\int_{B_r(x)} \Delta u dV_{g_{\Sph^n}}}{\omega_{n-1} \sin^{n-1} r}\\
&\leq \frac{n(n-2)}{4}\frac{\int_{B_r(x)} u dV_{g_{\Sph^n}}}{\omega_{n-1} \sin^{n-1}r} \\
&\leq \frac{n(n-2)}{4}\frac{\left(\int_{B_r(x)} u^{\frac{2n}{n-2}} dV_{g_{\Sph^n}}\right)^{\frac{n-2}{2n}}\Vol_{g_{\Sph^n}}(B_r(x))^{\frac{n+2}{2n}}}{\omega_{n-1} \sin^{n-1}r}\label{eq-HolderUsed} \\
& \leq \frac{n(n-2)}{4} \frac{\|u\|_{L^{\frac{2n}{n-2}}(\Sph^n)} \cdot \left| \omega_{n-1} \int^{r}_{0} \sin^{n-1}s ds\right|^{\frac{n+2}{2n}}}{\omega_{n-1} \sin^{n-1}r} \\
& \leq \frac{n(n-2)}{4} \left( \frac{\pi}{2} \right)^{\frac{n+2}{2n}}\left( \omega_{n-1}\right)^{\frac{2-n}{2n}} \|u\|_{L^{\frac{2n}{n-2}}(\Sph^n)}.
\end{align}
where we applied H\"{o}lder's inequality in \eqref{eq-HolderUsed} and the last step follows from Lemma \ref{LemmaCalculus}. 
\end{proof}

Next, by integrating the inequality in Lemma \ref{LemmaDerivativeSphericalMean} we obtain the following result.

\bl\label{lem: spherical mean inequality}
Assume $u\in C^\infty(\Sph^n)$, $n \in \{3, 4\}$, $u\geq 0$ and $\Delta u\leq \frac{n(n-2)}{4}u$. Then for any $x \in \Sph^n$, $0< r_0< r_1<\frac{\pi}{2}$, we have
\begin{align}
\begin{split}
&\dashint_{\partial B_{r_1}(x)} u d\sigma -\dashint_{\partial B_{r_0}(x)} u  d\sigma
\\&\leq  \frac{n(n-2)}{4} \left( \frac{\pi}{2} \right)^{\frac{n+2}{2n}}\left( \omega_{n-1}\right)^{\frac{2-n}{2n}} \|u\|_{L^{\frac{2n}{n-2}}(\Sph^n)} (r_1- r_0)
\end{split}
\end{align}
In particular, by letting $r_0 \to 0$, we have
\begin{equation}
u(x) \geq \dashint_{\partial B_r(x)}ud\sigma - \frac{n(n-2)}{4}\left( \frac{\pi}{2} \right)^{\frac{n+2}{2n}}\left( \omega_{n-1}\right)^{\frac{2-n}{2n}} \|u\|_{L^{\frac{2n}{n-2}}(\Sph^n)}r,
\end{equation}
for any $x \in \Sph^n$ and $0 < r < \frac{\pi}{2}$.
\el
\begin{proof}
By definition, we have
\[
\phi(r)=\dashint_{\partial B_{r}(x)} u d\sigma.
\]
By the fundamental theorem of calculus, we have
\be
\phi(r_1)-\phi(r_0) = \int_{r_0}^{r_1} \phi'(r)dr,
\ee
and so by applying Lemma \ref{LemmaDerivativeSphericalMean},  we obtain the desired conclusion. 
\end{proof}

If we have the an upper bound for $\|u\|_{L^\frac{2n}{n-2}(\Sph^n)}$, then we can conclude the following consequence of Lemma \ref{LemmaDerivativeSphericalMean}.
\bl\label{LemmaSphericalMeanInequality}
Assume $u\in C^\infty(\Sph^n)$, $n\in \{3, 4\}$, $u\geq 0$ and $\Delta u\leq \frac{n(n-2)}{4}u$. Assume
\be
\frac{n(n-2)}{4} \left( \frac{\pi}{2} \right)^{\frac{n+2}{2n}}\left( \omega_{n-1}\right)^{\frac{2-n}{2n}} 
\|u\|_{L^{\frac{2n}{n-2}}(\Sph^n)}
\leq K
\ee
for some $K>0$, then for any fixed $x \in \Sph^n$, and any $0< r_0< r_1<\frac{\pi}{2}$ we have
\be
\dashint_{\partial B_{r_1}(x)} u d\sigma -\dashint_{\partial B_{r_0}(x)} u d\sigma\leq  K(r_1- r_0)
\ee
or equivalently
\be\label{SphericalMeanInequality}
\dashint_{\partial B_{r_1}(x)} (u-Kr_1) d\sigma \leq \dashint_{\partial B_{r_0}(x)}( u-Kr_0) d\sigma.
\ee
\el
\begin{proof}
The proof follows from the fact that the function 
\begin{align}
   \dashint_{\partial B_{r}(x)} (u-Kr) d\sigma 
\end{align}
is a non-increasing function of $r$ for $r\in (0,\frac{\pi}{2})$.
\end{proof}

An interesting consequence of spherical mean inequality in Lemma \ref{lem: spherical mean inequality} is the following lemma which one should notice is similar to the Bishop-Gromov volume comparison lemma for Ricci curvature.

\bl \label{LemmaBishopGromov}
Assume $u\in C^\infty(\Sph^n)$, $n \in {3, 4} $, $u\geq 0$ and $\Delta u\leq \frac{n(n-2)}{4}u$. Assume
\be
\frac{n(n-2)}{4} \left( \frac{\pi}{2} \right)^{\frac{n+2}{2n}}\left( \omega_{n-1}\right)^{\frac{2-n}{2n}} 
\|u\|_{L^{\frac{2n}{n-2}}(\Sph^n)}
\leq K, 
\ee
for some $K>0$, then for any fixed $x \in \Sph^n$, and $0< r_0< r_1<\frac{\pi}{2}$ we have
\be
\dashint_{B_{r_1}(x)} (u-Kr) dV_{g_{\Sph^n}}  \leq \dashint_{B_{r_0}(x)}( u-Kr) dV_{g_{\Sph^n}}.
\ee
\el

\begin{proof}
We start by establishing the following inequality for all $r\in (0, \pi/2)$ we have 
\be
\dashint_{B_{r}(x)} (u(y)-K d_{\Sph^n}(y,x)) dV_{g_{\Sph^n}}(y)\geq \dashint_{\partial B_r(x)} (u-Kr)d\sigma.\label{Inequality1}
\ee
One can observe this by calculating
\begin{align}
 \int_{B_r(x)} (u(y)-K d_{\Sph^n}(y,x))  dV_{g_{\Sph^n}}
=&\int_0^r \left(\int_{\partial B_s(x)} (u-Ks) d\sigma \right)ds\\
=&\int_0^r (4\pi \sin^2 s)\left(\dashint_{\partial B_s(x)} (u-Ks) d\sigma \right)ds\quad \\
\geq & \int_0^r (4\pi \sin^2 s)\left(\dashint_{\partial B_r(x)} (u-Kr) d\sigma \right)ds\quad \label{eq-Reference1}\\
= &\vol_{g_{\Sph^n}}(B_r(x)) \dashint_{\partial B_r(x)} (u-Kr) d\sigma, 
\end{align}
where \eqref{eq-Reference1} follows by \eqref{SphericalMeanInequality} and $0<s\leq r$.

We now establish the next inequality for $0<r<R<\pi/2$, define $A_{r,R}(x)=B_R(x)\backslash B_r(x)$,  then we have
\be
\dashint_{A_{r,R}(x)}(u(y)-Kd_{\Sph^n}(y,x))dV_{g_{\Sph^n}}(y) \leq \dashint_{\partial B_r(x)} (u-Kr)d\sigma.\label{Inequality2}
\ee
We can see this inequality by calculating
\begin{align}
 &\int_{A_{r,R}(x)} (u(y) -K d_{\Sph^n}(y, x))  dV_{g_{\Sph^n}}(y) \\
=&\int_r^R \left(\int_{\partial B_s(x)} (u-Ks) d\sigma \right)ds\\
=&\int_r^R (4\pi \sin^2 r)\left(\dashint_{\partial B_s(x)} (u-Ks) d\sigma \right)ds\quad \\
\leq & \int_r^R(4\pi \sin^2 r)\left(\dashint_{\partial B_r(x)} (u-Kr) d\sigma \right)ds\quad \label{eq-Reference2}\\
= &\vol_{g_{\Sph^n}}(A_{r,R}(x)) \dashint_{\partial B_r(x)} (u-Kr) d\sigma,
\end{align}
where \eqref{eq-Reference2} follows by \eqref{SphericalMeanInequality} and $0<s\leq r$

Now by combining \eqref{Inequality1} with \eqref{Inequality2} we are able to establish the following inequality for $0< r_0< r_1<\frac{\pi}{2}$ 
\be
\dashint_{B_{r_1}(x)} (u-Kr) dV_{g_{\Sph^n}}  \leq \dashint_{B_{r_0}(x)}( u-Kr) dV_{g_{\Sph^n}}.\label{Inequality3}
\ee
Here we calculate
\begin{align}
& \int_{B_{r_1}(x)} (u -K d(y,x))dV_{g_{\Sph^n}} \\
=&\int_{B_{r_0}(x)} (u-K d(y,x))dV_{g_{\Sph^n}}
+\int_{A_{r_0,r_1}(x)} (u-K d(y,x))dV_{g_{\Sph^n}}\\
=&\vol_{g_{\Sph^n}}(B_{r_0}(x))\dashint_{B_{r_0}(x)} (u-K d(y,x))dV_{g_{\Sph^n}}
\\&\quad+\vol_{g_{\Sph^n}}(A_{r_0,r_1}(x))\dashint_{A_{r_0,r_1}(x)} (u-K d(y, x))dV_{g_{\Sph^n}}\\
\leq& \vol_{g_{\Sph^n}}(B_{r_0}(x))\dashint_{B_{r_0}(x)} (u-K d(y,x))dV_{g_{\Sph^n}}
\\&\quad +\vol_{g_{\Sph^n}}(A_{r_0,r_1}(x))\dashint_{B_{r_0}(x)} (u-K d(y, x))dV_{g_{\Sph^n}}\\
=&  \vol_{g_{\Sph^n}}(B_{r_1}(x))\dashint_{B_{r_0}(x)} (u-K d(y, x))dV_{g_{\Sph^n}}.
\end{align}
This finishes the proof.
\end{proof}

We can use Lemma \ref{LemmaBishopGromov} to prove that the limit conformal factor is defined everywhere and that it will be lower semicontinuous.

\bp\label{prop: lower semicontinuous u}
Assume $u_i \in C^\infty(\Sph^n)$, $n\in \{3, 4\}$, $u_i \geq 0$ and $\Delta u_i \leq \frac{n(n-2)}{4}u_i$. Assume
\be
\frac{n(n-2)}{4} \left( \frac{\pi}{2} \right)^{\frac{n+2}{2n}}\left( \omega_{n-1}\right)^{\frac{2-n}{2n}} 
\| u_i \|_{L^{\frac{2n}{n-2}}(\Sph^n)}
\leq K
\ee
for some $K>0$. Also assume that 
\begin{align}
    \int_{\Sph^n}|u_i-u_\infty|dV_{g_{\Sph^n}}\to 0 \text{ as } i\to \infty,
\end{align}  
for some $u_\infty\in L^1(\Sph^n)$, then $u_\infty: \Sph^n \to (0,\infty]$ is defined everywhere in $\Sph^n$ and $u_\infty$ is lower semicontinuous.
\ep

\begin{proof}
Apply Lemma \ref{LemmaBishopGromov} to each $u_i$ then we get
\be\label{BGi}
\dashint_{B_{r_1}(x)} (u_i-Kr) dV_{g_{\Sph^n}}  \leq \dashint_{B_{r_0}(x)}( u_i-Kr) dV_{g_{\Sph^n}}
\ee
for any $i$ and for any $0< r_0< r_1<\frac{\pi}{2}$. Since $u_i$ converges to $u_\infty$ in $L^1(\Sph^n)$ norm, we also get
\be
\dashint_{B_{r_1}(x)} (u_\infty-Kr) dV_{g_{\Sph^n}}  \leq \dashint_{B_{r_0}(x)}( u_\infty-Kr) dV_{g_{\Sph^n}}
\ee
for any $0< r_0< r_1<\frac{\pi}{2}$.

As a result, we can define
\be\label{BGinf}
u_\infty (x)=\lim_{s\to 0}\dashint_{B_{s}(x)} (u_\infty-Kr) dV_{g_{\Sph^n}}.
\ee
In addition, we can think of it as
\be
u_\infty (x)=\sup_{s\in (0, \frac{\pi}{2})}\dashint_{B_{s}(x)} (u_\infty-Kr) dV_{g_{\Sph^n}},
\ee
where the function $\dashint_{B_{s}(x)} (u_\infty-Kr) dV_{g_{\Sph^n}} $ is continuous with respect to both $x$ and $s$. Since the supremum of lower semicontinuous functions is lower semicontinuous, we conclude that $u_\infty$ is lower semicontinuous in $\Sph^n$.

\end{proof}

%%%%%%%%%%%%%%%%%%%%%%%%%%

\subsection{Spherical mean inequality for $u^{\frac{2}{n-2}}$ on the $n$-sphere ($n \geq 5$)}\label{subsect: spherical mean inequality high dimensions}

Now we apply the spherical mean method to $u^{\frac{2}{n-2}}$ which will yield interesting results in dimensions $n \geq 5$.

\bl\label{Lem-Superharmonic}
Let $u: \Sph^n \to \R$ be a positive smooth function. Let $C>0$ be a real number such that
\begin{equation}
\Delta u \leq Cu.
\end{equation}
Then for any $\alpha\in (0,1)$, we have
\begin{equation}
\Delta u^\alpha \leq \alpha Cu^\alpha.
\end{equation}

In particular, 
\begin{equation}
\Delta u^{\frac{2}{n-2}} \leq \frac{2}{n-2} C u^{\frac{2}{n-2}}
\end{equation}
for $ n \geq 5$.
\el
\begin{proof}
By direct calculation,
\begin{equation}
\Delta  u^\alpha =  \alpha u^{\alpha-1}\Delta  u+ \alpha(\alpha-1) |\nabla u|^2 u^{\alpha-2}.
\end{equation}
Since $\alpha\in (0,1)$, $\alpha-1<0$. Hence
\begin{equation}
\Delta  u^\alpha \leq   \alpha u^{\alpha-1}\Delta  u.
\end{equation}
Apply the inequality \(\Delta u \leq Cu,\) then we have
\begin{equation}
\Delta u^\alpha \leq \alpha Cu^\alpha.
\end{equation}
\end{proof}

Another elementary lemma we need is following:
\begin{lem}\label{lem: elementary inequality}
For all $ r \in \left( 0, \frac{\pi}{2} \right)$, 
\begin{equation}
\frac{\left( \int^r_0 \sin^{n-1} s  ds\right)^{\frac{n-1}{n}}}{\sin^{n-1} r} \leq \left( \frac{\pi}{2} \right)^{\frac{n-1}{n}}
\end{equation}
holds.
\end{lem}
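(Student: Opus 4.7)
The plan is to follow exactly the template of Lemma \ref{LemmaCalculus}, but now the algebra works out cleanly for every $n \geq 2$ instead of only $n = 3, 4$. The only two ingredients needed are the mean value theorem for integrals and the elementary monotonicity fact $r/\sin r \leq \pi/2$ on $(0, \pi/2)$.

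First I would apply the mean value theorem to write $\int_0^r \sin^{n-1} s\, ds = r \sin^{n-1}\xi$ for some $\xi \in (0, r)$. Since $\sin$ is increasing on $(0, \pi/2)$ and $\xi < r$, this gives the upper bound $\int_0^r \sin^{n-1} s\, ds \leq r \sin^{n-1} r$. Substituting into the left-hand side of the desired inequality yields
\begin{equation}
\frac{\left(\int_0^r \sin^{n-1} s\, ds\right)^{(n-1)/n}}{\sin^{n-1} r} \leq \frac{\left(r \sin^{n-1} r\right)^{(n-1)/n}}{\sin^{n-1} r}.
\end{equation}

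Next I would simplify the right-hand side algebraically. The exponent of $\sin r$ in the numerator is $(n-1)^2/n$, and subtracting the exponent $n-1$ in the denominator gives $(n-1)^2/n - (n-1) = -(n-1)/n$, so the expression equals $(r/\sin r)^{(n-1)/n}$. Finally, since $r/\sin r$ is increasing on $(0, \pi/2)$ with limiting value $\pi/2$ at the endpoint (a fact already invoked in the proof of Lemma \ref{LemmaCalculus}), I conclude $\left(r/\sin r\right)^{(n-1)/n} \leq (\pi/2)^{(n-1)/n}$, which completes the proof.

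There is essentially no obstacle here: the statement is chosen precisely so that the leftover power of $r$ after cancellation is $(n-1)/n > 0$, guaranteeing no singularity as $r \to 0$. This is in contrast to Lemma \ref{LemmaCalculus}, whose exponent $(n+2)/(2n)$ forced the restriction $n \in \{3, 4\}$ (see Remark \ref{rmrk: elementary inequality}); here the exponent is tailored to work for all $n \geq 2$, which is why the spherical mean method for $u^{2/(n-2)}$ will go through in all dimensions $n \geq 5$.
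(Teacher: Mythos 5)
Your proof is correct and follows essentially the same argument as the paper: mean value theorem for integrals, monotonicity of $\sin$, algebraic simplification, and the bound $r/\sin r \le \pi/2$ on $(0,\pi/2)$. Note that your simplification to $\left(r/\sin r\right)^{(n-1)/n}$ is the right one — the paper's proof has a typo writing $\left(\sin r/r\right)^{(n-1)/n}$ at that step, though the final conclusion is unaffected.
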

\begin{proof}
By using integral mean equality, for any $ r \in \left(0, \frac{\pi}{2} \right)$, there exists $\xi \in (0, r)$ such that 
\begin{eqnarray}
\frac{\left( \int^r_0 \sin^{n-1} s  ds\right)^{\frac{n-1}{n}}}{\sin^{n-1} r}
&= & \frac{\left( r \sin^{n-1} \xi \right)^{\frac{n-1}{n}}}{\sin^{n-1} r}  \\
& \leq & \frac{\left( r \sin^{n-1} r \right)^{\frac{n-1}{n}}}{\sin^{n-1} r} 
= \left( \frac{\sin r}{r} \right)^{\frac{n-1}{n}}
\leq \left( \frac{\pi}{2} \right)^{\frac{n-1}{n}}.
\end{eqnarray}
\end{proof}

\bl\label{lem: spherical mean inequality high dim}
Assume $u\in C^\infty(\Sph^n)$, $n \geq 5$, $u\geq 0$ and $\Delta u\leq \frac{n(n-2)}{4}u$. Then for any $x \in \Sph^n$, $0< r_0< r_1<\frac{\pi}{2}$, we have
\begin{equation}
\dashint_{\partial B_{r_1}(x)} u d\sigma -\dashint_{\partial B_{r_0}(x)} u  d\sigma
\leq  \frac{n}{2} \left( \frac{\pi}{2} \right)^{n-1} \frac{1}{\left( \omega_{n-1} \right)^{\frac{1}{n}}}\|u\|^{\frac{2}{n-2}}_{L^{\frac{2n}{n-2}}(\Sph^n)} (r_1- r_0)
\end{equation}
In particular, by letting $r_0 \to 0$, we have
\begin{equation}
u(x) \geq \dashint_{\partial B_r(x)}ud\sigma - \frac{n}{2} \left( \frac{\pi}{2} \right)^{n-1} \frac{1}{\left( \omega_{n-1} \right)^{\frac{1}{n}}}\|u\|^{\frac{2}{n-2}}_{L^{\frac{2n}{n-2}}(\Sph^n)} r,
\end{equation}
for any $x \in \Sph^n$ and $0 < r < \frac{\pi}{2}$.
\el
\begin{proof}
First of all, by Lemma \ref{Lem-Superharmonic}, we have
\begin{equation}
\Delta u^{\frac{2}{n-2}}
 \leq
\frac{n}{2} u^{\frac{2}{n-2}}.
\end{equation}
Then by Lemma \ref{LemmaPhiDerivative}, we have that for any $0 < r < \frac{\pi}{2}$,
\begin{eqnarray}
\frac{d}{dr} \left( \dashint_{\partial B_{r}(x)} u^{\frac{2}{n-2}} d\sigma \right)
& = &
\frac{\int_{B_r(x)} \Delta u^{\frac{2}{n-2}} dV_{g_{\Sph^n}}}{\omega_{n-1}  \sin^{n-1} r} \\
& \leq & 
\frac{n}{2} \frac{\int_{B_r(x)} u^{\frac{2}{n-2}} dV_{g_{\Sph^n}}}{\omega_{n-1} \sin^{n-1}r} \\
& \leq & 
\frac{n}{2} \frac{\|u\|^{\frac{2}{n-2}}_{L^{\frac{2n}{n-2}}} \left( \int^r_0 \omega_{n-1} \sin^{n-1} s ds\right)^{\frac{n-1}{n}}}{ \omega_{n-1} \sin^{n-1} r} \\
& \leq & 
\frac{n}{2} \left( \frac{\pi}{2} \right)^{n-1} \frac{1}{\left( \omega_{n-1} \right)^{\frac{1}{n}}} \|u\|^{\frac{2}{n-2}}_{L^{\frac{2n}{n-2}}(\Sph^n)}
\end{eqnarray}
In the last step, we use Lemma \ref{lem: elementary inequality}. Then integrating the inequality provides the conclusion and completes the proof.
\end{proof}

In the same way as the proof of Lemma \ref{LemmaBishopGromov}, by applying Lemma \ref{lem: spherical mean inequality high dim}, we obtain the following ball average inequality of $u^{\frac{2}{n-2}}$, which will be used in the proof of lower semi-continuity of $u$ in Proposition \ref{prop: lower semi-continuity of u high dim}.
\begin{lem}\label{lem: ball average inequality high dim}
Assume $u \in C^\infty(\Sph^n)$, $n \geq 5$, $u \geq 0$ and $\Delta u \leq \frac{n(n-2)}{4}u$. Assume
\begin{equation}
\frac{n}{2} \left( \frac{\pi}{2} \right)^{n-1} \frac{1}{\left( \omega_{n-1} \right)^{\frac{1}{n}}} \|u\|^{\frac{2}{n-2}}_{L^{\frac{2n}{n-2}}(\Sph^n)} \leq K,
\end{equation}
for some $K > 0$, then for any fixed $ x\in \Sph^n$, and $0 < r_0 < r_1 < \frac{\pi}{2}$ we have
\begin{equation}
\dashint_{B_{r_1}(x)} \left( u^{\frac{2}{n-2}} - K r \right) dV_{g_{\Sph^n}} \leq \dashint_{B_{r_{0}}(x)} \left( u^{\frac{2}{n-2}} - K r \right) dV_{g_{\Sph^n}}.
\end{equation}
\end{lem}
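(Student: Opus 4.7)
The plan is to mirror the proof of Lemma \ref{LemmaBishopGromov} verbatim, with $v := u^{\frac{2}{n-2}}$ playing the role of $u$ and Lemma \ref{lem: spherical mean inequality high dim} playing the role of Lemma \ref{LemmaDerivativeSphericalMean}. By Lemma \ref{Lem-Superharmonic}, $v$ satisfies $\Delta v \le \frac{n}{2} v$, and the proof of Lemma \ref{lem: spherical mean inequality high dim} already shows, after integration in $r$ and under the stated bound on $K$, that
\[
\dashint_{\partial B_{r_1}(x)} v\, d\sigma - \dashint_{\partial B_{r_0}(x)} v\, d\sigma \le K(r_1 - r_0), \qquad 0 < r_0 < r_1 < \tfrac{\pi}{2}.
\]
Equivalently, $r \mapsto \dashint_{\partial B_r(x)}(v - Kr)\, d\sigma$ is non-increasing on $(0, \tfrac{\pi}{2})$. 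This monotonicity is the sole analytic input needed; everything after it is polar-coordinate bookkeeping.

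First I would establish the inner-ball comparison: for any $r \in (0, \tfrac{\pi}{2})$,
\[
\dashint_{B_r(x)} \bigl(v(y) - K d_{\Sph^n}(y,x)\bigr)\, dV_{g_{\Sph^n}} \ge \dashint_{\partial B_r(x)}(v - Kr)\, d\sigma.
\]
As in Lemma \ref{LemmaBishopGromov}, one writes the ball integral in polar coordinates as $\int_0^r \omega_{n-1}\sin^{n-1}s \bigl(\dashint_{\partial B_s(x)}(v-Ks)\,d\sigma\bigr)\, ds$ and uses the monotonicity above to bound each inner spherical average (at radius $s \le r$) from below by the value at radius $r$, then divides by $\vol_{g_{\Sph^n}}(B_r(x))$.

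Symmetrically, I would establish the annulus comparison for $0 < r < R < \tfrac{\pi}{2}$ with $A_{r,R}(x) = B_R(x) \setminus B_r(x)$:
\[
\dashint_{A_{r,R}(x)} \bigl(v(y) - K d_{\Sph^n}(y,x)\bigr)\, dV_{g_{\Sph^n}} \le \dashint_{\partial B_r(x)}(v - Kr)\, d\sigma,
\]
obtained identically but now bounding each outer spherical average (at radius $s \ge r$) from above by the value at $s = r$.

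Finally, decomposing $B_{r_1}(x) = B_{r_0}(x) \sqcup A_{r_0, r_1}(x)$ and splitting the integral of $v - K d_{\Sph^n}(\cdot, x)$ over $B_{r_1}(x)$ accordingly, I apply the inner-ball comparison to the $B_{r_0}$ piece and the annulus comparison to the $A_{r_0, r_1}$ piece (both with the common upper bound $\dashint_{\partial B_{r_0}(x)}(v - Kr_0)\, d\sigma$), then invoke $\vol_{g_{\Sph^n}}(B_{r_0}(x)) + \vol_{g_{\Sph^n}}(A_{r_0, r_1}(x)) = \vol_{g_{\Sph^n}}(B_{r_1}(x))$ to collapse the weighted sum into $\vol_{g_{\Sph^n}}(B_{r_1}(x)) \dashint_{B_{r_0}(x)}(v - K d_{\Sph^n}(\cdot, x))\, dV_{g_{\Sph^n}}$. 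Dividing by $\vol_{g_{\Sph^n}}(B_{r_1}(x))$ yields the claim. There is no genuine obstacle; the only thing to notice is that we are forced to work with $v = u^{\frac{2}{n-2}}$ rather than $u$ itself, which is exactly why the argument only produces a comparison for this power in the range $n \ge 5$ (cf.\ Remark \ref{rmrk: elementary inequality}).
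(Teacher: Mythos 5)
Your proof is correct and matches the paper's approach exactly: the paper itself offers no explicit proof but states that the result follows "in the same way as the proof of Lemma \ref{LemmaBishopGromov}, by applying Lemma \ref{lem: spherical mean inequality high dim}," which is precisely the substitution $v = u^{\frac{2}{n-2}}$ and the inner-ball/annulus decomposition you carry out.
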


%%%%%%%%%%%%%%%%%%%%%%%%%%

\subsection{Spherical mean inequality of $f = \frac{n-2}{2} \ln u$ on $\Sph^n$ ($n \geq 3$)}\label{subsect: spherical mean inequality for f}

Now we use the same method as in Subsection \ref{subsect: spherical mean lower dim} and Subsection \ref{subsect: spherical mean inequality high dimensions} to show similar results for $f_j$.

\bl\label{lem: spherical mean inequality of f}
Assume that $f\in C^\infty(\Sph^n)$, $ n \geq 3$, satisfies $\Delta f\leq \frac{n}{2}$. Then for any fixed $x\in \Sph^n$ and any $0< r_0< r_1< \frac{\pi}{2}$,
\begin{equation}
\dashint_{\partial B_{r_1}(x)} f d\sigma -\dashint_{\partial B_{r_0}(x)} f  d\sigma
\leq  C(n) (r_1 - r_0),
\end{equation}
or equivalently
\begin{equation}
\dashint_{\partial B_{r_1}(x)} \left( f - C(n)r_1 \right) d\sigma
\leq
\dashint_{\partial B_{r_0}(x)} \left( f- C(n)r_0 \right) d\sigma,
\end{equation}
where $C(n):= 
\frac{n}{2} \int^{\frac{\pi}{2}}_0 \sin^{n-1} rdr$.
\el 
\begin{proof}
By Lemma \ref{LemmaPhiDerivative}, and $\Delta f \leq \frac{n}{2}$, we have that for any $0 < r < \frac{\pi}{2}$,
\begin{eqnarray}
\frac{d}{dr} \left( \dashint_{\partial B_{r}(x)} f d\sigma \right)
& = &
\frac{\int_{B_r(x)} \Delta f dV_{g_{\Sph^n}}}{\omega_{n-1} \sin^{n-1} r} \\
& \leq & 
\frac{n}{2} \frac{\int^r_0 \sin^{n-1}s ds}{\sin^{n-1}r} \\
& \leq & 
\frac{n}{2} \frac{\int^{\frac{\pi}{2}}_0 \sin^{n-1}s ds}{\sin^{n-1}\frac{\pi}{2}} =: C(n).
\end{eqnarray}
In the last step, we use a basic fact that $\frac{n}{2} \frac{\int^r_0 \sin^{n-1}s ds}{\sin^{n-1}r}$ is a increasing function of $r$. Then integrating the inequality provides the conclusion and completes the proof.
\end{proof}

In the same way as the proof of Lemma \ref{LemmaBishopGromov}, by applying Lemma \ref{lem: spherical mean inequality of f}, we obtain the following ball average inequality of $f$.
\begin{lem}\label{lem: BishopGromov of f}
Assume that $f\in C^\infty(\Sph^n)$, $n \geq 3$, satisfies $\Delta f\leq \frac{n}{2}$. Then for any fixed $x \in \Sph^n$ and any $0< r_0< r_1< \frac{\pi}{2}$,
\begin{equation}
\dashint_{B_{r_1}(x)}\left( f - C(n)r \right) dV_{g_{\Sph^n}}
\leq
\dashint_{B_{r_0}(x)}\left( f- C(n)r \right) dV_{g_{\Sph^n}}
\end{equation}
where 
$C(n) = 
\frac{n}{2} \int^{\frac{\pi}{2}}_0 \sin^{n-1} rdr$.

In particular, by taking limit as $r_0 \to 0$, one obtains that
\begin{equation}
\dashint_{B_{r_1}(x)} \left( f - C(n) r \right) dV_{g_{\Sph^n}} 
\leq f(x)
\end{equation}
holds for any $x \in \Sph^n$ and any $0 < r_1 < \frac{\pi}{2}$.
\end{lem}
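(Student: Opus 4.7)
The plan is to transcribe the proof of Lemma \ref{LemmaBishopGromov}, now with $f$ playing the role of $u$ and the constant $C(n)$ playing the role of $K$, with Lemma \ref{lem: spherical mean inequality of f} furnishing the monotonicity of spherical means that drove that earlier argument (namely, the equivalent rephrasing says $s \mapsto \dashint_{\partial B_s(x)}(f - C(n)s) d\sigma$ is non-increasing on $(0,\pi/2)$).

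First I would establish the ``inner'' inequality: for every $r \in (0,\pi/2)$,
\begin{equation*}
\int_{B_r(x)} \bigl( f(y) - C(n)\, d_{\Sph^n}(y,x) \bigr) dV_{g_{\Sph^n}}(y) \;\geq\; \vol_{g_{\Sph^n}}(B_r(x)) \dashint_{\partial B_r(x)} \bigl( f - C(n) r \bigr) d\sigma.
\end{equation*}
This follows by foliating $B_r(x)$ by geodesic spheres: write the left-hand side as $\int_0^r \bigl( \int_{\partial B_s(x)} (f - C(n) s) d\sigma \bigr) ds$, pull out the area of $\partial B_s(x)$ to expose the spherical average, and apply Lemma \ref{lem: spherical mean inequality of f} to bound the value at $s \leq r$ below by the value at $r$.

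Next, on the annulus $A_{r_0,r_1}(x) := B_{r_1}(x) \setminus B_{r_0}(x)$ for $0 < r_0 < r_1 < \pi/2$, the symmetric coarea computation, using this time that for $s \geq r_0$ the spherical average is at most its value at $r_0$, produces
\begin{equation*}
\int_{A_{r_0,r_1}(x)} \bigl( f(y) - C(n)\, d_{\Sph^n}(y,x) \bigr) dV_{g_{\Sph^n}} \;\leq\; \vol_{g_{\Sph^n}}(A_{r_0,r_1}(x)) \dashint_{\partial B_{r_0}(x)} \bigl( f - C(n) r_0 \bigr) d\sigma.
\end{equation*}
Now decompose $B_{r_1}(x) = B_{r_0}(x) \cup A_{r_0,r_1}(x)$, bound the annular contribution via the above annular inequality and then replace the spherical average at $r_0$ by the ball average over $B_{r_0}(x)$ using the inner inequality with $r = r_0$. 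Dividing by $\vol_{g_{\Sph^n}}(B_{r_1}(x))$ yields the desired monotonicity
\begin{equation*}
\dashint_{B_{r_1}(x)} \bigl( f - C(n) r \bigr) dV_{g_{\Sph^n}} \;\leq\; \dashint_{B_{r_0}(x)} \bigl( f - C(n) r \bigr) dV_{g_{\Sph^n}}.
\end{equation*}

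For the limiting statement, since $f \in C^\infty(\Sph^n)$ is continuous at $x$, Lebesgue differentiation gives $\dashint_{B_{r_0}(x)} f\, dV_{g_{\Sph^n}} \to f(x)$ and clearly $\dashint_{B_{r_0}(x)} C(n)\, d_{\Sph^n}(y,x)\, dV_{g_{\Sph^n}}(y) \to 0$ as $r_0 \to 0^+$, so letting $r_0 \to 0$ in the monotonicity inequality delivers the pointwise bound by $f(x)$. I anticipate no serious obstacle: the entire argument is a direct transcription of the Bishop--Gromov-style chain in Lemma \ref{LemmaBishopGromov}, and the only point requiring care is to re-express Lemma \ref{lem: spherical mean inequality of f} as the monotonicity of the shifted spherical average $\dashint_{\partial B_s(x)}(f - C(n)s) d\sigma$, which is its immediate rephrasing.
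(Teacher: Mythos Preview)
Your proposal is correct and is exactly the approach the paper indicates: it explicitly states that Lemma \ref{lem: BishopGromov of f} is obtained ``in the same way as the proof of Lemma \ref{LemmaBishopGromov}, by applying Lemma \ref{lem: spherical mean inequality of f},'' which is precisely the transcription you carry out.
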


Now by the same argument as in the proof of Proposition \ref{prop: lower semicontinuous u}, the ball average inequality in Lemma \ref{lem: BishopGromov of f} implies the following:
\begin{prop}\label{prop: lower semicontinuous f}
Assume that the sequence of functions $f_i \in C^\infty(\Sph^n)$, $n \geq 3$, satisfies $\Delta f \leq \frac{n}{2}$ for all $i \in \N$, and $f_i \to f_\infty$ in $L^1(\Sph^n)$ as $i \to \infty$. Then $f_\infty: \Sph^n \to (-\infty, +\infty]$ is defined everywhere in $\Sph^n$ and $f_\infty$ is lower semicontinuous.
\end{prop}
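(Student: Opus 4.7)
The plan is to mimic the proof of Proposition \ref{prop: lower semicontinuous u}, replacing Lemma \ref{LemmaBishopGromov} by its $f$-analogue, Lemma \ref{lem: BishopGromov of f}. First, since each $f_i$ is smooth and satisfies $\Delta f_i \le \tfrac{n}{2}$, Lemma \ref{lem: BishopGromov of f} gives, for every $x\in \Sph^n$ and every $0 < r_0 < r_1 < \pi/2$,
\begin{equation*}
\dashint_{B_{r_1}(x)}\bigl(f_i(y) - C(n)\,d_{\Sph^n}(y,x)\bigr)\,dV_{g_{\Sph^n}}(y)\;\leq\;\dashint_{B_{r_0}(x)}\bigl(f_i(y) - C(n)\,d_{\Sph^n}(y,x)\bigr)\,dV_{g_{\Sph^n}}(y).
\end{equation*}
Because $f_i \to f_\infty$ in $L^1(\Sph^n)$, both sides pass to the limit, so the same monotonicity inequality holds with $f_i$ replaced by $f_\infty$.

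Next, I would define
\begin{equation*}
f_\infty(x) \;:=\; \sup_{s\in(0,\pi/2)} \,\dashint_{B_s(x)}\bigl(f_\infty(y) - C(n)\,d_{\Sph^n}(y,x)\bigr)\,dV_{g_{\Sph^n}}(y).
\end{equation*}
By the monotonicity just obtained, this supremum coincides with $\lim_{s\to 0^{+}}$ of the same expression, so the definition is consistent with the $L^1$ equivalence class of $f_\infty$ (by the Lebesgue differentiation theorem, it agrees a.e.\ with any pointwise representative). Since $f_\infty \in L^1(\Sph^n)$, each ball average is finite, so $f_\infty(x) \in (-\infty,+\infty]$ is defined at every point of $\Sph^n$.

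To obtain lower semicontinuity, I would argue that for each fixed $s\in(0,\pi/2)$ the map
\begin{equation*}
x\;\longmapsto\;\dashint_{B_s(x)}\bigl(f_\infty(y) - C(n)\,d_{\Sph^n}(y,x)\bigr)\,dV_{g_{\Sph^n}}(y)
\end{equation*}
is continuous on $\Sph^n$. The $f_\infty$-piece is continuous in $x$ by the standard approximation of $L^1$ functions by continuous ones combined with the uniform continuity of $x\mapsto \mathbf 1_{B_s(x)}$ in $L^1$, while the correction piece is in fact constant in $x$ by the isometric homogeneity of the round sphere (each ball of radius $s$ has the same integral of the distance to its center). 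Writing $f_\infty$ as the supremum of a family of continuous functions in $x$ then yields lower semicontinuity.

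The main potential obstacle is justifying the pointwise definition: one must ensure that the limit/supremum is invariant under modification of $f_\infty$ on null sets (so that it genuinely depends only on the $L^1$ class of the limit), and that passing $f_i\to f_\infty$ through the ball-average inequality is legitimate for \emph{every} $x$ and \emph{every} $r_0<r_1$, not just for a.e.\ $x$. This is handled by continuity in $x$ of each ball-average against the $L^1$ function $f_\infty$; once that is in place the remainder is routine.
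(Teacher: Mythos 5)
Your proposal follows the same approach the paper uses: it cites Lemma \ref{lem: BishopGromov of f} as the $f$-analogue of Lemma \ref{LemmaBishopGromov}, passes the ball-average monotonicity through the $L^1$ limit, defines $f_\infty$ pointwise by the monotone limit/supremum of ball averages, and concludes lower semicontinuity from continuity in $x$ of each fixed-radius ball average. The paper simply asserts this "by the same argument as in the proof of Proposition \ref{prop: lower semicontinuous u}," and your write-up correctly supplies those details (including the clean observation that the $C(n)\,d_{\Sph^n}(\cdot,x)$ correction term contributes a constant independent of $x$ by the homogeneity of the round sphere).
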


%%%%%%%%%%%%%%%%%

%%%%%%%%%%%%%%%%%%%%%%%%%%%%%%%%%%%%%%%%%%%%%%

\section{Uniform positive lower bound of conformal factors}\label{sec: Uniform Lower Bound}

In this section our main goal is to show that the limiting conformal factor must be uniformly bounded away from zero in Proposition \ref{prop: positive lower bound}. This provides a non-degeneracy result for the geometry defined by the limiting conformal factor. As an application of Proposition \ref{prop: positive lower bound}, we obtain a positive uniformly lower bound of the sequence of conformal factors $u_i$ in Proposition \ref{prop: uniform positive lower bound}. By applying Proposition \ref{prop: uniform positive lower bound}, we further prove a convergence result for $f_i = \frac{2}{n-2} \ln u_i$ in Corollary \ref{cor: uniform L^2 bound on f_j}, which will be used in Section \ref{sec: Singular Set Decomposition}. Another important application of Proposition \ref{prop: positive lower bound} is to obtain $L^1$ convergence of $u^{\frac{2}{n-2}}_i$ in dimensions of $n \geq 5$ in Lemma \ref{lem: L1 convergence of a power of u}, which is used in the proof of Proposition \ref{prop: uniform positive lower bound}, as well as in the proof of lower semi-continuity of the limiting conformal factor $u_\infty$ in Proposition \ref{prop: lower semi-continuity of u high dim}.

%%%%%%%%%%%%%%%%%%%
\subsection{Positive infimum of limiting conformal factor}\label{subsect: positive lower bound}

We start by establishing a dichotomy property for the limit conformal factor as in Proposition \ref{prop-cutoff-infimum}. To this end, we begin by proving some useful lemmas.
\bl\label{Lem-Log-L2}
Let $n\geq 3$ be a positive integer. Let $u: \Sph^n \to \R$ be a positive smooth function. Let $C>0$ be a real number such that
\be
\Delta u \leq Cu.
\ee
If we define $f=  \ln u$, then
\be
\| \nabla f\|_{L^2 (\Sph^n)}\leq\sqrt{C\vol (\Sph^n)}.
\ee
\el
\begin{proof}
Since $u= e^{ f}$, we have
\[\Delta  u=  e^{ f}\Delta f+ |\nabla f|^2 e^{ f},\]
as a result, the inequality $\Delta u \leq Cu$ is equivalent to 
\be
 \Delta f+ |\nabla f|^2 \leq C.
\ee
Integrate over $\Sph^n$ and use Stokes' Theorem then we have the desired result.
\end{proof}

We now define the truncation of a function which will be used in the results which follow.
\bd\label{defn-cut-off}
Let $n\geq 3$ be a positive integer. Let $u: \Sph^n \to \R$ be a positive smooth function. Let $K>0$ be a real number,  for each $x \in \Sph^n$, we define
\be
\bar{u}^K(x)=
\begin{cases}
u(x), & \text{ if }  \ \ u(x)<K,\\
K, & \text{ if } \ \ u(x)\geq K.
\end{cases}
\ee
Then $\bar{u}^K$ is a positive continuous function on $\Sph^n$ with the maximal value not greater than $K$.
\ed
From the definition we can prove the following lemma:
\bl\label{Lemma-Cutoff-Single-Function}
Let $n\geq 3$ be a positive integer. Let $u: \Sph^n \to \R$ be a positive smooth function, and let $K>0$ be a regular value of the function $u$. If for some real number $C>0$ we have
\be
\Delta u\leq C u \text{ \rm in }\Sph^n
\ee
then for all $\varphi \in W^{1,2}(\Sph^n)$ such that $\varphi\geq 0$ we have
\be
-\int_{\Sph^n} \langle \nabla \varphi, \nabla \bar{u}^K \rangle dV_{g_{\Sph^n} }\leq \int_{\Sph^n} \varphi \bar{u}^KdV_{g_{\Sph^n} }.
\ee
\el

\begin{proof}
By Theorem 4.4 from \cite{Evans-Gariepy}, we have for all $K>0$
\be
\nabla \bar{u}^K=\begin{cases}
\nabla u,  & \text{a.e. on } \{u(x)<K\},\\
0, & \text{a.e. on } \{u (x)\geq K\}.
\end{cases}
\ee
As a result we have
\be
\begin{split}
-\int_{\Sph^n} \langle \nabla \varphi, \nabla \bar{u}^K \rangle dV_{g_{\Sph^n} } &=-\int_{\{ u<K\}} \langle \nabla \varphi, \nabla u \rangle dV_{g_{\Sph^n} }\\
&=\int_{\{u <K\}} \varphi \Delta udV_{g_{\Sph^n} }-\int_{\partial\{u<K\}} \varphi \partial_{\nu} udA_{g_{\Sph^n} },
\end{split}
\ee
where $dA_{g_{\Sph^n} }$ is the area form induced on $\partial\{u<K\}$ which we know is well defined by the next observation. Since $K$ is a regular value of $f$, from the regular level set Theorem  we know that the level set $ \{u=K\}=\partial \{u<K\}$ is an embedded submanifold of dimension $n-1$ in $\Sph^n$. Hence we can apply Stokes' theorem to get the last step. Moreover, since $\nu $ is the outer unit normal vector on the boundary of the set $ \{u<K\}$, we have
\be
 \partial_{\nu} u \geq 0 \text{\rm\  for all } x\in \partial \{u<K\}.
\ee
Hence we can drop the boundary term to get the inequality
\be
-\int_{\Sph^n} \langle \nabla \varphi , \nabla \bar{u}^K \rangle dV_{g_{\Sph^n} }\leq \int_{\{u <K\}} \varphi \Delta u dV_{g_{\Sph^n} }.
\ee
Since
\be
 \Delta u \leq C u,
\ee
we have
\begin{align}
-\int_{\Sph^n} \langle \nabla \varphi, \nabla \bar{u}^K \rangle dV_{g_{\Sph^n} }&\leq  \int_{\{u <K\}} \varphi \Delta u dV_{g_{\Sph^n} } 
\\&\leq  C\int_{\{u <K\}} \varphi u dV_{g_{\Sph^n} }\leq  C \int_{\Sph^n} \varphi \bar{u }^K dV_{g_{\Sph^n} }.
\end{align}
This finishes the proof.
\end{proof}

We can prove similar results for a sequence of functions:
\bl\label{Cutoff-Lemma-Sard}
Let $n\geq 3$ be a positive integer. Let $u_j$ be a sequence of smooth positive functions defined on $\Sph^n$. If for some $C>0$ we have
\be
\Delta u_j \leq C u_j, \ \ \forall j \in \N,
\ee
then there exists $K>0$ such that for all $\varphi \in W^{1,2}(\Sph^n)$ with $\varphi\geq 0$ we have
\be\label{Cutoff-Inequality}
-\int_{\Sph^n} \langle \nabla \varphi,\nabla \bar{u}_j^K \rangle dV_{g_{\Sph^n} }\leq  C\int_{\Sph^n} \varphi\bar{u}_j^K dV_{g_{\Sph^n} } \quad \forall j \in \N .
\ee
Moreover, we can choose $K$ as large as we want.
\el

\begin{proof}
Note that if $0<K \leq \inf\limits_{x\in \Sph^n} u_j(x)$ for some $j$ then we have $\bar{u}_j^K(x)=K$ for all $x\in \Sph^n$. On the other hand, if $\sup\limits_{x\in\Sph^n}u_j(x) \leq K$ for some $j$ then $\bar{u}_j^K(x)=u_j(x)$ for all $x\in \Sph^n$. In either one of these two cases, the inequality (\ref{Cutoff-Inequality}) holds.

In general, by Sard's theorem, for each function $u_j$, the critical values of $u_j$ has measure zero, and the union of all the critical sets for each of the function also has measure zero. As a result, there exists $K>0$ such that for each $u_j$ either $K$ is a regular value or $u_j^{-1}(\{K\})=\emptyset$. By Lemma \ref{Lemma-Cutoff-Single-Function} we get inequality (\ref{Cutoff-Inequality}). Moreover, we can choose $K$ as large as we want. This finishes the proof.
\end{proof}

Next we prove similar results for the limit function, but before that we need to consider the regularity of the limit function:
\bl\label{Cutoff-Lemma-Limit}
Let $n\geq 3$ be a positive integer. Let $K$ and $C$ be positive real numbers. Let $u_{j}$ be a sequence of positive smooth functions on $\Sph^n$ satisfying
\be
\Delta u_{j} \leq C u_j, \quad \forall j \in \N.
\ee
Then the sequence $\bar{u}^K_j$ is uniformly bounded in $W^{1,2}(\Sph^n)$:
\be
\|\bar{u}^K_j \|_{W^{1,2}(\Sph^n)}\leq K\sqrt{(1+C) \vol(\Sph^n)}.
\ee
As a result, there exists $\bar{u}_{\infty}^K\in W^{1,2}(\Sph^n)$ such that $\bar{u}^K_j $ converges to $\bar{u}^K_\infty $ in $L^2(\Sph^n)$, and that $\bar{u}^K_j $ converges to $\bar{u}^K_\infty $ weakly in $W^{1,2}(\Sph^n)$.
\el

\begin{proof}
By definition of the truncation in Definition \ref{defn-cut-off}, we get
\be\label{Cutoff-L2}
\|\bar{u}_j^K\|_{L^{2}(\Sph^n)}\leq K \sqrt{\vol(\Sph^n)}.
\ee
By Theorem 4.4 from \cite{Evans-Gariepy}, we have for all $K>0$ and for each $j$
\be
\nabla \bar{u}^K_j=\begin{cases}
\nabla u_j, & \text{a.e. on } \{u_j (x)<K\},\\
0,  & \text{a.e. on } \{u_j (x)\geq K\}.
\end{cases}
\ee
Hence
\be\label{Cutoff-Gradient}
\begin{split}
\|\nabla \bar{u}^K_j\|^2_{L^2(\Sph^n)} &=\int_{\{f_j <K\}} |\nabla u_j|^2dV_{g_{\Sph^n} } \\
&=\int_{\{u_j <K\}} |u_j |^2|\nabla \ln u_j |^2dV_{g_{\Sph^n} }\\
&\leq K^2\int_{\{u_j <K\}}  |\nabla \ln u_j |^2dV_{g_{\Sph^n} }\\
&\leq K^2\|\nabla \ln u_j\|^2_{L^2(\Sph^n)}\leq K^2 C \vol(\Sph^n),
\end{split}
\ee
where the last step follows from Lemma \ref{Lem-Log-L2}. Combine inequalities (\ref{Cutoff-L2}) and (\ref{Cutoff-Gradient}) then we get the desired results.
\end{proof}

Now we prove the following proposition concerning the limit function:
\begin{lem}\label{lem-differential inequality for limit function}
Let $u_{j}$ be a sequence of positive smooth functions on $\Sph^n$, $n \geq 3$, satisfying
\be
\Delta u_{j} \leq u_j, \quad \forall j \in \N.
\ee
Let $K>0$ be a real number that satisfies the requirement in Lemma \ref{Cutoff-Lemma-Sard}.
Let $\bar{u}^K_{\infty} \in W^{1, 2}(\Sph^n)$ be the limit function as in Lemma \ref{Cutoff-Lemma-Limit}. Then $\bar{u}^K_{\infty}$ satisfies the inequality
\be
-\int_{\Sph^n} \langle \nabla \varphi,\nabla \bar{u}_{\infty}^K \rangle dV_{g_{\Sph^n} }\leq \int_{\Sph^n} \varphi \bar{u}_{\infty}^KdV_{g_{\Sph^n} },
\ee
for all $\varphi \in W^{1,2}(\Sph^n)$ such that $\varphi\geq 0$.
\end{lem}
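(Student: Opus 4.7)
The plan is to take the limit $j \to \infty$ in the differential inequality guaranteed by Lemma \ref{Cutoff-Lemma-Sard}, using the two modes of convergence established in Lemma \ref{Cutoff-Lemma-Limit}: strong $L^2$ convergence of $\bar{u}_j^K \to \bar{u}_\infty^K$ and weak $W^{1,2}$ convergence of the same sequence. Since $K$ has been chosen to satisfy the hypothesis of Lemma \ref{Cutoff-Lemma-Sard}, for every nonnegative $\varphi \in W^{1,2}(\Sph^n)$ and every $j$ we have
\begin{align}
-\int_{\Sph^n}\langle\nabla\varphi,\nabla\bar{u}_j^K\rangle\, dV_{g_{\Sph^n}} \le \int_{\Sph^n}\varphi\,\bar{u}_j^K\, dV_{g_{\Sph^n}},
\end{align}
where the constant $C$ from Lemma \ref{Cutoff-Lemma-Sard} equals $1$ in our setting because the hypothesis here is $\Delta u_j\le u_j$.

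First I would handle the left-hand side. Since $\nabla\varphi\in L^2(\Sph^n)$ and since $\bar{u}_j^K \rightharpoonup \bar{u}_\infty^K$ weakly in $W^{1,2}(\Sph^n)$ by Lemma \ref{Cutoff-Lemma-Limit}, the map $v \mapsto \int_{\Sph^n}\langle\nabla\varphi,\nabla v\rangle\, dV_{g_{\Sph^n}}$ is a bounded linear functional on $W^{1,2}(\Sph^n)$, so
\begin{align}
\int_{\Sph^n}\langle\nabla\varphi,\nabla\bar{u}_j^K\rangle\, dV_{g_{\Sph^n}} \;\longrightarrow\; \int_{\Sph^n}\langle\nabla\varphi,\nabla\bar{u}_\infty^K\rangle\, dV_{g_{\Sph^n}}.
\end{align}

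Next I would treat the right-hand side by using the strong $L^2$ convergence from Lemma \ref{Cutoff-Lemma-Limit} together with the fact that $\varphi\in W^{1,2}(\Sph^n)\hookrightarrow L^2(\Sph^n)$. By Cauchy--Schwarz,
\begin{align}
\left|\int_{\Sph^n}\varphi\bigl(\bar{u}_j^K-\bar{u}_\infty^K\bigr)\,dV_{g_{\Sph^n}}\right|\le \|\varphi\|_{L^2(\Sph^n)}\,\bigl\|\bar{u}_j^K-\bar{u}_\infty^K\bigr\|_{L^2(\Sph^n)} \longrightarrow 0,
\end{align}
so the right-hand side converges to $\int_{\Sph^n}\varphi\,\bar{u}_\infty^K\, dV_{g_{\Sph^n}}$. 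Combining the two limits, the inequality is preserved and yields the desired conclusion.

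There is no serious obstacle in this argument; the whole point of Lemma \ref{Cutoff-Lemma-Limit} was to deliver precisely the two convergences needed, and the two sides of the cut-off inequality are linear in $\bar{u}_j^K$ and $\nabla\bar{u}_j^K$ respectively, which is exactly the setting in which weak-$W^{1,2}$ plus strong-$L^2$ passage to the limit works cleanly. The one subtle point to flag is that nonnegativity of $\varphi$ is what allows us to preserve the inequality in the limit: if we had needed to test against signed $\varphi$, we would have to use the inequality in both signs, but here the one-sided inequality is exactly what the cut-off truncation gives and it is precisely what is preserved.
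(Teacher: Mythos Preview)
Your proof is correct and follows essentially the same approach as the paper: pass to the limit in the cut-off inequality from Lemma \ref{Cutoff-Lemma-Sard} using the weak $W^{1,2}$ convergence for the gradient term and the strong $L^2$ convergence for the zeroth-order term, both supplied by Lemma \ref{Cutoff-Lemma-Limit}. One small expository remark: the nonnegativity of $\varphi$ is needed so that the inequality holds for each $j$ (it is a hypothesis in Lemma \ref{Cutoff-Lemma-Sard}), not for the passage to the limit itself, since limits of real numbers always preserve weak inequalities.
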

\begin{proof}
By Lemma \ref{Cutoff-Lemma-Limit} we know that $\bar{u}^K_j $ converges to $\bar{u}^K_\infty $ in $L^2(\Sph^n)$, and that $\bar{u}^K_j $ converges to $\bar{u}^K_\infty $ weakly in $W^{1,2}(\Sph^n)$. As a result, for any $\varphi \in W^{1,2}(\Sph^n)$ we have that
\be
\int_{\Sph^n} \varphi\bar{u}_{j}^K dV_{g_{\Sph^n} }\to \int_{\Sph^n} \varphi \bar{u}_{\infty}^KdV_{g_{\Sph^n} },  \ \ \text{ as }j\to \infty,
\ee
and that
\be
\int_{\Sph^n} \langle \nabla \varphi ,\nabla \bar{u}_{j}^K \rangle dV_{g_{\Sph^n} } \to \int_{\Sph^n} \langle \nabla \varphi,\nabla \bar{u}_{\infty}^K \rangle dV_{g_{\Sph^n} }, \ \  \text{ as } j\to\infty.
\ee
As a result, by (\ref{Cutoff-Inequality}) we have for all $\varphi\in W^{1,2}(\Sph^n)$ such that $\varphi \geq 0$
\be
- \int_{\Sph^n} \langle \nabla \varphi,\nabla \bar{u}_{\infty}^K\rangle dV_{g_{\Sph^n} } \leq \int_{\Sph^n} \varphi\bar{u}_{\infty}^K dV_{g_{\Sph^n} }.
\ee
This finishes the proof.
\end{proof}

We need the definition of essential infimum of a function:
\begin{defn}\label{Defn-Ess-Inf}
Consider the standard round sphere $(\Sph^n,g_{\Sph^n})$, $ n\geq 3$. Let $U$ be an open subset of $\Sph^n$ . Let $f: U \to \R$ be measurable. Define the set
\be
U_f^{ess}=\{a\in \R: Vol_{g_{\Sph^n}}(f^{-1}(-\infty,a))=0\}.
\ee
We use $\inf\limits_{U} f$ to denote the essential infimum of $f$ in $U$ and define
\be
\inf_{U} f=\sup U^{ess}_f
\ee
\end{defn}

Finally, we apply the maximum principle for weak solution to prove the following property for the essential infimum of  $u_\infty$.

\begin{prop}\label{prop-cutoff-infimum}
Let $u_{j}$ be a sequence of positive smooth functions on $\Sph^n$, $ n \geq 3$, satisfying
\be
\Delta u_{j} \leq C u_j, \quad \forall j \in \N.
\ee
If we further assume that $u_j\to u_\infty$ in $L^2(\Sph^n)$ for some $u_\infty$, then either the essential infimum of $u_\infty$ is strictly positive or $u_\infty = 0$ a.e. on $\Sph^n$.
\end{prop}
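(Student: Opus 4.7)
The plan is to apply a strong minimum principle (equivalently, the weak Harnack inequality) to the limit of the truncated functions, and then transfer the resulting dichotomy back to $u_\infty$. First, by Lemma \ref{Cutoff-Lemma-Sard} I choose $K > 0$ (arbitrarily large) so that for every $j$ and every nonnegative $\varphi \in W^{1,2}(\Sph^n)$ one has
\begin{equation}
-\int_{\Sph^n} \langle \nabla\varphi,\nabla \bar u_j^K\rangle\,dV_{g_{\Sph^n}} \le C\int_{\Sph^n} \varphi\, \bar u_j^K\,dV_{g_{\Sph^n}}.
\end{equation}
By Lemma \ref{Cutoff-Lemma-Limit}, $\bar u_j^K$ converges strongly in $L^2(\Sph^n)$ and weakly in $W^{1,2}(\Sph^n)$ to some $\bar u_\infty^K \in W^{1,2}(\Sph^n)$, and by the obvious constant-$C$ variant of Lemma \ref{lem-differential inequality for limit function}, the same weak inequality is inherited by $\bar u_\infty^K$. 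Equivalently, $\bar u_\infty^K \ge 0$ is a nonnegative weak supersolution of the coercive elliptic equation $-\Delta v + Cv = 0$ on $\Sph^n$.

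Next I identify $\bar u_\infty^K$ with $\min(u_\infty, K)$ almost everywhere. Since $t \mapsto \min(t,K)$ is $1$-Lipschitz and $u_j \to u_\infty$ in $L^2$, the sequence $\bar u_j^K = \min(u_j, K)$ converges to $\min(u_\infty,K)$ in $L^2(\Sph^n)$, and by uniqueness of $L^2$ limits $\bar u_\infty^K = \min(u_\infty, K)$ a.e. One also has $u_\infty \ge 0$ a.e., since each $u_j > 0$ and one can extract a pointwise a.e. convergent subsequence.

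The main step is the strong minimum principle for nonnegative $W^{1,2}$ weak supersolutions of the uniformly elliptic operator $-\Delta + C$ (with $C$ a positive constant) on the closed connected manifold $\Sph^n$: such a supersolution either has strictly positive essential infimum, or vanishes almost everywhere. In local charts this is the classical weak Harnack inequality (Gilbarg--Trudinger, Theorem 8.18), which yields constants $p > 0$ and $C' > 0$ such that on any sufficiently small geodesic ball $B$ one has
\begin{equation}
\left( \int_B (\bar u_\infty^K)^p \, dV_{g_{\Sph^n}} \right)^{1/p} \le C' \inf_B \bar u_\infty^K.
\end{equation}
A standard covering and connectedness argument (if $\inf_{\Sph^n}\bar u_\infty^K = 0$, pick a point $x_0$ where every ball around it has zero essential infimum of $\bar u_\infty^K$, apply weak Harnack to conclude $\bar u_\infty^K \equiv 0$ in a ball around $x_0$, and propagate via the fact that the set of such points is both open and closed) upgrades this to the global dichotomy for $\bar u_\infty^K$.

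Finally, transfer the dichotomy back to $u_\infty$. If $\inf_{\Sph^n} u_\infty = 0$ in the essential sense, then $\inf_{\Sph^n} \bar u_\infty^K = 0$ for every admissible $K$, so by the dichotomy $\bar u_\infty^K \equiv 0$ a.e., which gives $\min(u_\infty,K) = 0$ a.e., and hence $u_\infty = 0$ a.e. Otherwise $\inf_{\Sph^n} u_\infty > 0$, as desired. The main obstacle I anticipate is the careful application of the weak Harnack inequality to weak $W^{1,2}$ supersolutions of $-\Delta + C$ on a closed Riemannian manifold and the covering/connectedness step needed to globalize the local strong minimum principle; both are classical but require some bookkeeping.
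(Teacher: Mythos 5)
Your proof is correct and follows the same overall skeleton as the paper's: choose a good truncation level $K$ via Lemma \ref{Cutoff-Lemma-Sard}, pass to the $W^{1,2}$-weak and $L^2$-strong limit $\bar u_\infty^K$ via Lemma \ref{Cutoff-Lemma-Limit}, observe that $\bar u_\infty^K$ is a nonnegative weak supersolution of $(\Delta - C)v \le 0$ by Lemma \ref{lem-differential inequality for limit function}, and conclude by an elliptic minimum/Harnack argument before transferring the dichotomy back to $u_\infty$. Where you diverge from the paper is in how the local elliptic estimate is globalized. The paper invokes the strong maximum principle for weak solutions (Gilbarg--Trudinger, Theorem~8.19) applied to a nested family of geodesic balls $B_r(N)$ with $\frac{\pi}{2} < r < \pi$: since $\operatorname{ess\,inf}_{B_{\pi/2}(N)} \bar u_\infty^K = 0$ and $\bar u_\infty^K \ge 0$ force $\operatorname{ess\,inf}_{B_r(N)} = \operatorname{ess\,inf}_{B_{r+\epsilon}(N)} = 0$, constancy on $B_r(N)$ follows, and letting $r \to \pi$ exhausts $\Sph^n$ up to a point. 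You instead apply the weak Harnack inequality (GT, Theorem~8.18) on small balls and propagate by an open-and-closed connectedness argument. Both routes are valid; the paper's is a slick shortcut tailored to the sphere (a single chart ball covers everything but a point, so no covering bookkeeping is needed), whereas your version is more portable and would go through verbatim on any closed connected manifold. One thing you make explicit that the paper leaves implicit is the identification $\bar u_\infty^K = \min(u_\infty, K)$ a.e., obtained from the $1$-Lipschitz continuity of $t \mapsto \min(t,K)$ together with uniqueness of $L^2$ limits; this cleanly justifies the final transfer step $\min(u_\infty,K) \equiv 0 \Rightarrow u_\infty \equiv 0$, which the paper instead argues through pointwise a.e.\ convergence of a subsequence. (Minor point: the weak Harnack inequality should be stated with the $L^p$ norm on a concentric ball of smaller radius than that of the infimum, e.g.\ $\bigl(\int_{B_{2R}}(\bar u_\infty^K)^p\bigr)^{1/p} \le C'\inf_{B_R}\bar u_\infty^K$; writing both over the same $B$ is a typo that does not affect the argument.)
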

\begin{proof}
Since $\|u_j-u_\infty\|_{L^2(\Sph^n)}\to 0$ as $j\to\infty$, choose a subsequence if needed, then we have $u_j\to u_\infty$ pointwise almost everywhere in $\Sph^n$. Let $K>0$ be a real number that satisfies the requirement in Lemma \ref{Cutoff-Lemma-Sard}. Construct a truncated sequence $\bar{u}^K_j$ as in Definition \ref{defn-cut-off}.
By Lemma \ref{Cutoff-Lemma-Limit}, choose a subsequence if needed, there exists $\bar{u}^K_{\infty}\in W^{1,2}(\Sph^n)$ such that $\bar{u}^K_j$ converges to $\bar{u}^K_\infty$ in $L^2(\Sph^n)$ norm. As a result, choose a subsequence if needed we have $\bar{u}_j^K\to \bar{u}_\infty^K$ pointwise almost everywhere in $\Sph^n$.

It suffices to show that if the essential infimum $\inf\limits_{\Sph^n} u_\infty = 0$ then $\bar{u}^K_\infty  = u_\infty = 0$ in $\Sph^n$. We assume that $\inf\limits_{\Sph^n} u_\infty = 0$. Since for each $j$ we have $0<\bar{u}_j^K\leq u_j$, we have $0\leq \inf\limits_{\Sph^n} \bar{u}_j^K \leq \inf\limits_{\Sph^2} u_\infty =0$.
This implies that for any $\delta, \delta^{\prime}>0$, we have
\be
\Vol_{g_0}\left(\left(\bar{u}^K_\infty\right)^{-1}(-\infty, \delta)\right) >0,
\ee
and
\be
\Vol_{g_0}\left(\left(\bar{u}^K_\infty \right)^{-1}(-\infty, -\delta^{\prime})\right)=0.
\ee
Let $N$ be the north pole of $\mathbb{S}^{n}$, and $S$ be the south pole.  $B_{\frac{\pi}{2}}(N)$ and $B_{\frac{\pi}{2}}(S)$ are upper and lower hemispheres respectively. Then either
\be
\inf\limits_{B_{\frac{\pi}{2}}(N)} \bar{u}^K_\infty=0,
\ee
or
\be
\inf\limits_{B_{\frac{\pi}{2}}(S)} \bar{u}^K_\infty=0.
\ee
Without loss of generality we assume that $\inf\limits_{B_{\frac{\pi}{2}}(N)}\bar{u}^K_\infty=0$. Since $\bar{u}^K_\infty\geq 0$ in $\Sph^n$, for any $r>\frac{\pi}{2}$, and $\epsilon>0$ such that $r+\epsilon <\pi$ we have
\be\label{eqn: inf-equal}
\inf_{B_{r}(N)} \bar{u}^K_\infty = \inf_{B_{r+\epsilon}(N)}\bar{u}^K_\infty=0.
\ee

Now by Lemma \ref{lem-differential inequality for limit function}, $\bar{u}^K_\infty$ satisfies
\be
(\Delta -C)\bar{u}^K_\infty \leq 0,
\ee
on $B_{r+\epsilon}(N)$ in the weak sense.
Hence by the strong maximum principle for weak solutions (see Theorem 8.19 in \cite{Gilbarg-Trudinger}), the equality in (\ref{eqn: inf-equal}) implies that $\bar{u}^K_{\infty}$ is constant on $B_{r}(N)$. This is true for any $r>\frac{\pi}{2}$, thus $\bar{u}^K_\infty \equiv0$ on $\mathbb{S}^{2}$. Moreover, since $K>0$, for almost every $x\in \Sph^n$ we have,
\be
\lim_{j\to\infty} \bar{u}_j^K=\lim_{j\to\infty} u_j=0,
\ee
and hence $u_\infty = 0$ a.e. on $\Sph^n$. This finishes the proof.
\end{proof}

We now assume that the volume of the sequence is bounded below and that the sequence is uniformly integrable; from these assumptions, we obtain that $u_{\infty}\neq 0$ in $L^p(\Sph^n)$.
\bl\label{lem-uniform-integrability}
Let $u_i$ be a sequence of smooth function in $\Sph^n$, $n\geq 3$, such that $u_i >0$. Assume that the sequence of metrics $ g_i := u_i^{\frac{4}{n-2}} g_{\Sph^n}$ has uniform volume lower bound such that
\be
V\leq \vol_{g_i}(\Sph^n) \text{ for all }i
\ee
for some $V>0$. We also assume that the sequence $u_i$ satisfies the uniform integrability condition, in the sense that for some $\alpha \in (0,1)$, $\Lambda >0$ we have
\be
\vol_{g_i}(U) \le  \Lambda \vol_{g_{\Sph^n}}(U)^{\alpha}, \quad \forall U \subset M \text{ measurable}.
\ee
If we further assume that for some $p\in [1,\infty)$ and some  $u_\infty \in L^p(\Sph^n)$, we have $\|u_i-u_\infty\|_{L^p(\Sph^n)}\to 0$, then 
\be
u_\infty\not \equiv 0 \ \ \text{in} \ \ L^p(\Sph^n).
\ee
\el

\begin{proof}
By contradiction, we assume that
\be
u_\infty\equiv 0.
\ee
Since $u_i$ converges to $u_\infty$ in $L^p(\Sph^n)$, by Theorem 4.9 in \cite{Brezis} we have $u_i\to 0$ pointwise almost everywhere.

Choose $\epsilon>0$ such that $\Lambda \epsilon^\alpha\leq V/2$, then by Egorov's theorem (Theorem 4.29 in \cite{Brezis}), there exists $A\subset \Sph^n$ measurable such that $\vol_{g_{\Sph^n}}(\Sph^n\backslash A) <\epsilon$, and that after passing to a subsequence (which we still denote as $u_i$), $u_i\to 0$ uniformly in $A$.

By the uniform integrability assumption, we have
\be
\vol_{g_i}(\Sph^n\backslash A) \leq\Lambda (\vol_{g_{\Sph^n}}(\Sph^n\backslash A))^\alpha< \Lambda \epsilon^\alpha < V/2.
\ee

Since $u_i\to  0$ uniformly in $A$, we can also choose $i$ large enough, such that
\be
\vol_{g_i}(A) =\int_{A} u_i^{\frac{2n}{n-2}} dV_{g_{\Sph^n}} <V/2.
\ee

Combine these results, then we have when $i$ is large enough,

\be
\vol_{g_i}(\Sph^n)=\vol_{g_i}(A)+\vol_{g_i}(\Sph^n\backslash A)< V,
\ee
which is a contradiction.
\end{proof}

Now we are ready to prove that the limit function $u_\infty := e^{\frac{(n-2)f_\infty}{2}}$ obtained in Corollary \ref{cor:W12WeakConvergence} has positive essential infimum.

\begin{prop}\label{prop: positive lower bound}
Let $u_i$ be a sequence of smooth positive functions in $\Sph^n$, $n \geq 3$. Assume that the metrics $ g_i := u_i^{\frac{4}{n-2}} g_{\Sph^n}$ have nonnegative scalar curvature, i.e. $\Sc_{g_i} \geq 0$, and uniformly bounded volumes, i.e.
\be
V^{-1} \leq \vol_{g_i}(\Sph^n) \leq V \text{ for all } \, i \in \N,
\ee
for some $V>0$. We also assume that the sequence $u_i$ satisfies the uniform integrability condition, in the sense that for some $\alpha \in (0,1)$, $\Lambda>0$
\be
\vol_{g_i}(U) \le  \Lambda \vol_{g_{\Sph^n}}(U)^{\alpha}, \quad \forall U \subset M \text{ measurable}.
\ee
Then the limit function $u_\infty := e^{\frac{(n-2)f_\infty}{2}} \in W^{1, p}$, with $1 \leq p < \frac{4n}{3n-2}$, has positive essential infimum, i.e. there exists $e_\infty >0$ such that $\inf\limits_{\Sph^n} u_\infty \geq e_\infty$. Here the limit function is obtained in Corollary \ref{cor:W12WeakConvergence}.
\end{prop}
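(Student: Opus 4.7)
The plan is to combine the dichotomy obtained in Proposition \ref{prop-cutoff-infimum} with the non-degeneracy statement of Lemma \ref{lem-uniform-integrability}, and to feed them both with the analytic input that has already been assembled in Sections \ref{sec: Regularity of Solutions} and \ref{sec: Uniform Lower Bound}. The only nontrivial verification is that the sequence $u_i$ actually satisfies the hypotheses of those two results.

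First I would check the pointwise differential inequality. From the conformal scalar curvature formula rewritten in terms of $u_i = e^{(n-2)f_i/2}$ (recorded just before Definition \ref{def-Weak Positive Scalar Confromal}) we have
\begin{equation}
-\frac{4(n-1)}{n-2}\Delta u_i + \Sc_{g_{\Sph^n}} u_i = \Sc_{g_i} u_i^{\frac{n+2}{n-2}},
\end{equation}
and the hypothesis $\Sc_{g_i}\ge 0$ together with $\Sc_{g_{\Sph^n}}=n(n-1)$ immediately yields
\begin{equation}
\Delta u_i \le \frac{n(n-2)}{4}\, u_i \quad \text{on } \Sph^n,
\end{equation}
so the standing assumption of Proposition \ref{prop-cutoff-infimum} and Lemma \ref{lem-uniform-integrability} is satisfied with $C = n(n-2)/4$.

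Next I would produce the limit and its convergence mode. The upper volume bound $\vol_{g_i}(\Sph^n) \le V$ places us in the setting of Corollary \ref{cor:W12WeakConvergence}, which, after passing to a subsequence, gives $u_i \to u_\infty$ in $L^p(\Sph^n)$ for every $p\in [1,\frac{2n}{n-2})$; in particular (since $n\ge 3$ and hence $\frac{2n}{n-2}\ge 6 > 2$) the convergence holds in $L^2(\Sph^n)$. Thus Proposition \ref{prop-cutoff-infimum} applies and produces the dichotomy: either the essential infimum of $u_\infty$ is strictly positive, or $u_\infty = 0$ almost everywhere on $\Sph^n$.

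Finally I would rule out the second alternative using the volume lower bound and the uniform integrability assumption. These are exactly the two inputs of Lemma \ref{lem-uniform-integrability}; combined with the $L^p$ convergence from Corollary \ref{cor:W12WeakConvergence} (any fixed $p\in [1,\frac{2n}{n-2})$ will do), the lemma forces $u_\infty \not\equiv 0$ in $L^p(\Sph^n)$. Consequently the second branch of the dichotomy is excluded, and we conclude that $\inf_{\Sph^n} u_\infty \ge e_\infty$ for some $e_\infty > 0$, which is the desired conclusion. No step is expected to be a serious obstacle here, since all the machinery has been built up in the preceding lemmas; the role of this proposition is mainly to assemble those ingredients and to observe that the conformal PDE provides precisely the differential inequality needed to trigger the dichotomy.
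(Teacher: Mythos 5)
Your proposal is correct and follows essentially the same route as the paper: derive $L^2$ convergence from Corollary \ref{cor:W12WeakConvergence}, apply the dichotomy of Proposition \ref{prop-cutoff-infimum}, and exclude the trivial branch via Lemma \ref{lem-uniform-integrability}; you merely reorder the last two steps and, helpfully, spell out the verification that $\Sc_{g_i}\ge 0$ gives $\Delta u_i\le \tfrac{n(n-2)}{4}u_i$. One tiny slip: $\tfrac{2n}{n-2}$ is decreasing in $n$, so it equals $6$ only at $n=3$ (not $\ge 6$ for all $n\ge 3$), but it is always $>2$, which is all you need for the $L^2$ convergence.
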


\begin{proof}
By Corollary \ref{cor:W12WeakConvergence} with $f_i := \frac{2 \ln u_i}{n-2}$, $\Sc_{g_i} \geq 0$ and $\vol_{g_i}(\Sph^n) \geq V^{-1}$ for all $i \in \N$ imply that $u_i$ converges to $u_\infty := e^{\frac{(n-2)f}{2}}$ in $L^q(\Sph^n)$. Then Lemma \ref{lem-uniform-integrability} implies that $u_\infty \not\equiv 0$ in $L^{q}(\Sph^n)$. Therefore, by the dichotomy property of $u_\infty$ in Proposition \ref{prop-cutoff-infimum}, we conclude that the essential infimum of $u_\infty$ is positive.
\end{proof}

%%%%%%%%%%%%%%%%%%%%%%%%%%%%

\subsection{A uniform lower bound on the conformal factors}\label{subsect: uniform lower bound}
As an application of the spherical mean inequality developed in section \ref{sec:Spherical Mean Method} and Proposition \ref{prop: positive lower bound}, we obtain a uniform lower bound on the sequence $u_i$.

Because in dimensions $n \geq 5$, the spherical mean inequality is established for the function $u^\frac{2}{n-2}$ instead of $u$, we first prove the following lemma.
\begin{lem}\label{lem: L1 convergence of a power of u}
Let $u_j$ be a sequence of smooth positive functions in $\Sph^n$, $n \geq 5$. Assume $u_j \to u_\infty$ in $L^1(\Sph^n)$ and $\inf\limits_{\Sph^n} u_\infty \geq e_\infty >0$, then $u^{\frac{2}{n-2}}_j \to u^{\frac{2}{n-2}}_\infty$ in $L^1(\Sph^n)$.
\end{lem}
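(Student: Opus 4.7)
The key observation is that for $n \geq 5$ the exponent $\alpha := \frac{2}{n-2}$ lies in $(0, 1)$ (in fact $\alpha \leq 2/3$), so $t \mapsto t^\alpha$ is concave and subadditive on $[0, \infty)$. My plan is to combine this subadditivity with H\"older's inequality to reduce the desired $L^1$ convergence of $u_j^\alpha$ to the hypothesized $L^1$ convergence of $u_j$. It is worth noting that the positivity hypothesis $\inf_{\Sph^n} u_\infty \geq e_\infty$ does not appear to enter the $L^1$ statement at all; the argument goes through from $L^1$ convergence of $u_j$ alone.

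First I would record the elementary pointwise bound
\begin{equation}
|a^\alpha - b^\alpha| \leq |a - b|^\alpha, \qquad a, b \geq 0,\ \alpha \in (0, 1],
\end{equation}
which follows from the subadditivity $(x + y)^\alpha \leq x^\alpha + y^\alpha$ by writing the larger of $a, b$ as the smaller plus $|a - b|$. Applying this pointwise with $a = u_j(x)$, $b = u_\infty(x)$ and integrating over $\Sph^n$ yields
\begin{equation}
\|u_j^\alpha - u_\infty^\alpha\|_{L^1(\Sph^n)} \leq \int_{\Sph^n} |u_j - u_\infty|^\alpha \, dV_{g_{\Sph^n}}.
\end{equation}

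Then I would apply H\"older's inequality with conjugate exponents $p = 1/\alpha$ and $p' = 1/(1 - \alpha)$ (both finite since $\alpha \in (0, 1)$) to the integrand $|u_j - u_\infty|^\alpha \cdot 1$, obtaining
\begin{equation}
\int_{\Sph^n} |u_j - u_\infty|^\alpha \, dV_{g_{\Sph^n}} \leq \|u_j - u_\infty\|_{L^1(\Sph^n)}^\alpha \cdot \Vol_{g_{\Sph^n}}(\Sph^n)^{1 - \alpha},
\end{equation}
which tends to zero as $j \to \infty$ by the hypothesis $\|u_j - u_\infty\|_{L^1(\Sph^n)} \to 0$ and the finiteness of $\Vol_{g_{\Sph^n}}(\Sph^n)$. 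This gives the desired $L^1$ convergence of $u_j^\alpha$ to $u_\infty^\alpha$. I do not anticipate a real obstacle: the whole argument is a two-line estimate once the elementary concavity inequality is in hand, and it only uses the dimensional assumption $n \geq 5$ insofar as it forces $\alpha < 1$.
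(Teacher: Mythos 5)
Your proof is correct and takes a genuinely different route from the paper's. The paper factors $u_j - u_\infty$ via the algebraic identity $a^k - b^k = (a-b)\sum a^i b^{k-1-i}$ applied to $a = u_j^{1/(n-2)}$, $b = u_\infty^{1/(n-2)}$, then uses the hypothesis $\inf_{\Sph^n} u_\infty \ge e_\infty > 0$ to bound the cofactor below, isolating $\left|u_j^{2/(n-2)} - u_\infty^{2/(n-2)}\right|$ and yielding a \emph{Lipschitz}-type estimate
\begin{equation}
\left\| u_j^{2/(n-2)} - u_\infty^{2/(n-2)} \right\|_{L^1} \le C(e_\infty,n)\, \|u_j - u_\infty\|_{L^1}.
\end{equation}
Your approach instead invokes the concavity inequality $|a^\alpha - b^\alpha| \le |a-b|^\alpha$ for $\alpha = \frac{2}{n-2}\in(0,1)$ (valid precisely because $n\ge 5$) followed by H\"older on a finite-measure space, yielding the \emph{H\"older}-type estimate
\begin{equation}
\left\| u_j^{2/(n-2)} - u_\infty^{2/(n-2)} \right\|_{L^1} \le \Vol_{g_{\Sph^n}}(\Sph^n)^{1-\alpha}\, \|u_j - u_\infty\|_{L^1}^{\alpha}.
\end{equation}
Your argument is cleaner and strictly more general, since it makes no use of the positive lower bound on $u_\infty$, as you correctly observe; the tradeoff is a worse (sublinear) modulus of continuity in $\|u_j - u_\infty\|_{L^1}$, though that is irrelevant here since only $L^1$ convergence is being concluded. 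Both approaches are sound, and yours has the additional merit of sidestepping a couple of bookkeeping infelicities in the paper's factorization (where the index range and the exponent on $e_\infty$ are slightly off, though the underlying idea is fine).
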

\begin{proof}
We begin by calculating
\begin{align}
|u_{j}- u_{\infty}|&=\left|\sqrt{u_{j}^{2/(n-2)} }-\sqrt{u_{\infty}^{2/(n-2)}}\right|
\\\quad &\cdot\left| \sum_{i=0}^{n-2} \left(\sqrt{u_{j}^{2/(n-2)}   }\right)^{i} \left( \sqrt{u_{\infty}^{2/(n-2)}}\right)^{n-2-i}\right|,
\end{align}
where
\begin{align}
&  \sum_{i=0}^{n-2} \left(\sqrt{u_{j}^{2/(n-2)}}\right)^{i} \left(\sqrt{u_{\infty}^{2/(n-2)}}\right)^{n-2-i}  \\
 & \geq  \left(\sqrt{u_{j}^{2/(n-2)}}+\sqrt{u_{\infty}^{2/(n-2)}}\right) \left(\sqrt{u_{\infty}^{2/(n-2)}}\right)^{n-3} \\
 &\geq   e_{\infty}^{(n-3)/2} \left(\sqrt{u_{j}^{2/(n-2)}}+\sqrt{u_{\infty}^{2/(n-2)}}\right).
\end{align}
Combine these two inequalities, then we have
\begin{align}
\int_{\Sph^n}|u_{j}- u_{\infty}| dV_{g_{\Sph^n}} \geq e_{\infty}^{(n-3)/2} \int_{\Sph^n}\left| u_{j}^{2/(n-2)} -u_{\infty}^{2/(n-2)}\right| dV_{g_{\Sph^n}}. 
\end{align}
This finishes the proof.
\end{proof}

We now show that there must be a uniform positive lower bound for the sequence of conformal factors.

\begin{prop}\label{prop: uniform positive lower bound}
Let $u_i$ be a sequence of smooth positive functions in $\Sph^n$, $n \geq 3$. Assume that the metrics $ g_i := u_i^{\frac{4}{n-2}} g_{\Sph^n}$ have nonnegative scalar curvature, i.e. $\Sc_{g_i} \geq 0$, and uniformly bounded volumes, i.e.
\be
V^{-1} \leq \vol_{g_i}(\Sph^n) \leq V \text{ for all } \, i \in \N,
\ee
for some $V>0$. We also assume that the sequence $u_i$ satisfies the uniform integrability condition, in the sense that for some $\alpha \in (0,1)$, $\Lambda>0$
\be
\vol_{g_i}(U) \le  \Lambda \vol_{g_{\Sph^n}}(U)^{\alpha}, \quad \forall U \subset M \text{ measurable}.
\ee
Then there exists $i_0 \in \N$ such that $u_i \geq \frac{e_\infty}{4}>0$ in $\Sph^n$ holds for all $i \geq i_0$, where $e_\infty:= \inf\limits_{\Sph^n} u_\infty >0$.
\end{prop}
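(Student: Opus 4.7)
The plan is to argue by contradiction using the ball-average monotonicity inequalities from Section \ref{sec:Spherical Mean Method}, combined with the $L^1$ convergence of the conformal factors (or of a suitable power thereof) to the limit $u_\infty$, whose essential infimum is at least $e_\infty > 0$ by Proposition \ref{prop: positive lower bound}. Suppose for contradiction that, after passing to a subsequence still denoted $u_i$, there exist points $x_i \in \Sph^n$ with $u_i(x_i) < e_\infty/4$. The strategy is to convert this pointwise smallness into smallness of a ball average of $u_i$ (respectively $u_i^{2/(n-2)}$) over $B_R(x_i)$ for some uniformly chosen radius $R$, then use $L^1$ convergence to transfer the bound to $u_\infty$, contradicting $u_\infty \geq e_\infty$ almost everywhere.

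For $n \in \{3,4\}$ I will apply Lemma \ref{LemmaBishopGromov} to each $u_i$. The constant $K$ there depends only on $n$ and $\|u_i\|_{L^{2n/(n-2)}(\Sph^n)}$, and since $\int_{\Sph^n} u_i^{2n/(n-2)} dV_{g_{\Sph^n}} = \vol_{g_i}(\Sph^n) \leq V$, we may replace $K$ by a single constant $K_0$ independent of $i$. Sending $r_0 \to 0$ in the monotonicity statement, using continuity of $u_i$ and Lebesgue differentiation, gives $\dashint_{B_R(x)} u_i\,dV_{g_{\Sph^n}} \leq u_i(x) + K_0 R$ for every $x \in \Sph^n$ and $0 < R < \pi/2$. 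Fix $R$ so small that $K_0 R < e_\infty/8$; this $R$ depends only on $n$, $V$, and $e_\infty$. Then at the bad point, $\dashint_{B_R(x_i)} u_i\,dV_{g_{\Sph^n}} < 3e_\infty/8$. Because $\vol_{g_{\Sph^n}}(B_R)$ is independent of the center by homogeneity of the sphere, the $L^1$ convergence from Corollary \ref{cor:W12WeakConvergence} ensures that for $i$ large the averages of $u_i$ and $u_\infty$ over $B_R(x_i)$ differ by less than $e_\infty/8$, hence $\dashint_{B_R(x_i)} u_\infty\,dV_{g_{\Sph^n}} < e_\infty/2$, contradicting the essential lower bound $u_\infty \geq e_\infty$.

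For $n \geq 5$ I will run the same argument applied to $u_i^{2/(n-2)}$, using Lemma \ref{lem: ball average inequality high dim} for the ball-average monotonicity (whose constant is again uniform in $i$ by the volume bound) and Lemma \ref{lem: L1 convergence of a power of u} for the $L^1$ convergence of the power. The role played before by the gap between $e_\infty/4$ and $e_\infty$ is now played by the strictly positive gap $e_\infty^{2/(n-2)} - (e_\infty/4)^{2/(n-2)}$. The main obstacle is simply verifying uniformity of the spherical-mean constant $K$ in $i$, which is immediate from the volume hypothesis; a secondary point worth flagging is that the $n \geq 5$ case relies on Lemma \ref{lem: L1 convergence of a power of u}, whose hypothesis $\inf u_\infty \geq e_\infty > 0$ is precisely the conclusion of Proposition \ref{prop: positive lower bound} and so must be invoked before this step.
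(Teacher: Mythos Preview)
Your proposal is correct and follows essentially the same route as the paper: both arguments combine the ball-average inequality coming from the spherical mean method (with the constant $K$ made uniform via the volume bound $\|u_i\|_{L^{2n/(n-2)}}^{2n/(n-2)}=\vol_{g_i}(\Sph^n)\le V$), the $L^1$ convergence from Corollary \ref{cor:W12WeakConvergence} (together with Lemma \ref{lem: L1 convergence of a power of u} when $n\ge 5$), and the essential lower bound $u_\infty\ge e_\infty$ from Proposition \ref{prop: positive lower bound}. The only differences are cosmetic: the paper argues directly rather than by contradiction and integrates the spherical mean inequality of Lemma \ref{lem: spherical mean inequality} by hand rather than citing the already-integrated Lemma \ref{LemmaBishopGromov}, but the underlying estimate $\dashint_{B_R(x)}u_i\,dV_{g_{\Sph^n}}\le u_i(x)+K_0 R$ and its use are the same.
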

\begin{proof}
Recall that $\Sc_{g_i} \geq 0$ implies $\Delta_{g_{\Sph^n}} u_i \leq \frac{n(n-2)}{4} u_i$. 

Let's first prove the proposition in the case of $n \in \{3, 4\}$. In this case, Lemma \ref{lem: spherical mean inequality} implies that 
\begin{equation}
u_i(x) \geq \dashint_{\partial B_r(x)} u_i d\sigma - c(n) \|u_i\|_{L^{\frac{2n}{n-2}}(\Sph^n)}r
\end{equation}
holds for any $x\in \Sph^n$ and $0 < r < \frac{\pi}{2}$, where $c(n):=\frac{n(n-2)}{4}\left( \frac{\pi}{2} \right)^{\frac{n+2}{2n}} \left( \omega_{n-1}\right)^{\frac{2-n}{2n}}$ and $\omega_{n-1}$ is the volume of $\Sph^{n-1}$ with the standard metric $g_{\Sph^n}$. Because $\|u_i\|_{L^{\frac{2n}{n-2}}(\Sph^n)} = \left( \vol_{g_i}(\Sph^n) \right)^{\frac{n-2}{2n}} \leq V^{\frac{n-2}{2n}}$ for all $i \in \N$, we further obtain that
\begin{equation}
u_i(x) \geq \dashint_{\partial B_r(x)} u_i d\sigma - c(n)V^{\frac{n-2}{2n}}r
\end{equation}
holds for any $x \in \Sph^n$, $0 < r < \frac{\pi}{2}$, and all $i \in \N$. Multiplying the inequality by the area of $\partial B_r(x)$ with respect to $g_{\Sph^n}$, that is $\omega_{n-1}\sin^{n-1} r$, we then obtain
\begin{equation}
u_i(x) \omega_{n-1} \sin^{n-1}r 
\geq
\int_{\partial B_r(x)}u_i d\sigma - c(n) V^{\frac{n-2}{2n}} \omega_{n-1} r \sin^{n-1}r.
\end{equation}
Integrating the inequality and applying Proposition \ref{prop: positive lower bound} gives
\begin{align}
 u_i(x) \vol_{g_{\Sph^n}}(B_r(x)) 
& \geq 
\int_{B_{r}(x)}u_i dV_{g_{\Sph^n}} 
- c(n)V^{\frac{n-2}{2n}}\omega_{n-1} \int^r_0 r \sin^{n-1}s ds \\
& \geq  \int_{B_r(x)} u_\infty dV_{g_{\Sph^n}} - \|u_i - u_\infty \|_{L^1(\Sph^n)} \\
&  - c(n)V^{\frac{n-2}{2n}}\omega_{n-1} \int^r_0 s \sin^{n-1}s ds \\
& \geq  e_\infty \vol_{g_{\Sph^n}}(B_r(x)) - \|u_i - u_\infty\|_{L^1(\Sph^n)} \\
&  - c(n)V^{\frac{n-2}{2n}}\omega_{n-1} \int^r_0 s \sin^{n-1}s ds.
\end{align}
Dividing the inequality by $\vol_{g_{\Sph^n}}(B_r(x))$ produces that
\begin{equation}
u_i(x) \geq e_\infty - \frac{\|u_i - u_\infty\|_{L^1(\Sph^n)}}{\vol_{g_{\Sph^n}}(B_r(x))} - c(n)V^{\frac{n-2}{2n}} \frac{\int^r_0 s \sin^{n-1} s ds}{\int^r_0 \sin^{n-1}s ds}
\end{equation}
holds for any $x \in \Sph^n$, $0 < r < \frac{\pi}{2}$, and all $i$. Because 
\begin{equation}
\lim\limits_{r \to 0} \frac{\int^r_0 s \sin^{n-1} s ds}{\int^r_0 \sin^{n-1}s ds} =0,
\end{equation}
we can choose $0 < r_1 < \frac{\pi}{2}$ such that
\begin{equation}
c(n)V^{\frac{n-2}{2n}} \frac{\int^{r_1}_0 s \sin^{n-1} s ds}{\int^{r_1}_0 \sin^{n-1}s ds} \leq \frac{e_\infty}{4}.
\end{equation}
Moreover, because $u_i \to u_\infty$ in $L^1(\Sph^n)$ by Corollary \ref{cor:W12WeakConvergence}, there exists $i_0 \in \N$ such that for any $i > i_0$ 
\begin{equation}
\frac{\|u_i - u_\infty\|_{L^1(\Sph^n)}}{\vol_{g_{\Sph^n}}(B_{r_1}(x))} \leq \frac{e_\infty}{4}.
\end{equation}
As a result, we obtain that for any $i>i_0$
\begin{equation}
u_i(x) \geq e_\infty - \frac{\|u_i - u_\infty\|_{L^1(\Sph^n)}}{\vol_{g_{\Sph^n}}(B_{r_1}(x))} - c(n)V^{\frac{n-2}{2n}} \frac{\int^{r_1}_0 s \sin^{n-1} s ds}{\int^{r_1}_0 \sin^{n-1}s ds} \geq \frac{e_\infty}{4}
\end{equation}
holds for all $x \in \Sph^n$.

In the case of $n \geq 5$, we work on $u^{\frac{2}{n-2}}_i$ and $u^{\frac{2}{n-2}}_\infty$, instead of $u_i$ and $u_\infty$. By using the spherical mean inequality of $u^{\frac{2}{n-2}}_i$ in Proposition \ref{lem: spherical mean inequality high dim}, the positive infimum of $u_\infty$ in Proposition \ref{prop: positive lower bound}, and $L^1$-convergence of $u^{\frac{2}{n-2}}_i$ in Lemma \ref{lem: L1 convergence of a power of u}, applying the same argument as in the case of $n \in \{3, 4\}$ for $u^{\frac{2}{n-2}}_i$, we can obtain that there exists $i_0$ such that $u^{\frac{2}{n-2}}_i \geq \left( e_\infty \right)^{\frac{2}{n-2}}$ holds for all $i \geq i_0$. We omit the detailed argument, since it is verbatim as the first case, only replacing $u_i$ by $u^{\frac{2}{n-2}}_i$.
\end{proof}

As an application of the uniformly positive lower bound of $u_i$ in Proposition \ref{prop: uniform positive lower bound}, we prove the following convergence result for $f_i = \frac{2}{n-2} \ln u_i$, which will be used in the proof of Theorem \ref{thm-MainTheorem-sec6}.

\begin{cor}\label{cor: uniform L^2 bound on f_j}
Let $u_i$ be a sequence of smooth positive functions in $\Sph^n, n \geq 3$. Assume that the metrics $ g_i := u_i^{\frac{4}{n-2}} g_{\Sph^n}$ have nonnegative scalar curvature, i.e. $\Sc_{g_i} \geq 0$, and uniformly bounded volumes, i.e.
\be
V^{-1} \leq \vol_{g_i}(\Sph^n) \leq V \text{ for all } \, i \in \N,
\ee
for some $V>0$. We also assume that the sequence $u_i$ satisfies the uniform integrability condition, in the sense that for some $\alpha \in (0,1)$, $\Lambda>0$
\be
\vol_{g_i}(U) \le  \Lambda \vol_{g_{\Sph^n}}(U)^{\alpha}, \quad \forall U \subset M \text{ measurable}.
\ee
Then for $f_i = \frac{2}{n-2}\ln(u_i)$ we find that
\begin{align}
    \int_{\Sph^n}f_i^2dV_{g_{\Sph^n}} \le C,
\end{align}
and $f_i \rightarrow f_{\infty}$ in $L^p$, $p \in \left[1,\frac{2n}{n-2}\right)$. Moreover, $f_\infty: \Sph^n \to (-\infty, +\infty]$ is defined everywhere in $\Sph^n$ and is lower semicontinuous.
\end{cor}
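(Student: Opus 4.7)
The plan is to combine the uniform positive lower bound for $u_i$ from Proposition~\ref{prop: uniform positive lower bound} with the gradient control from Lemma~\ref{lem: gradient L2 bound of f} to produce a uniform $W^{1,2}$ bound on $f_i$, then extract the desired $L^p$ convergence via Rellich--Kondrachov compactness, and finally invoke Proposition~\ref{prop: lower semicontinuous f} for the lower semicontinuity of $f_\infty$.

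First, I would observe that Proposition~\ref{prop: uniform positive lower bound} gives $u_i \ge e_\infty/4$ for all $i$ large enough, so $f_i = \frac{2}{n-2}\ln u_i \ge -M$ pointwise for some constant $M>0$ independent of $i$. To bootstrap this to an $L^2$ bound, I would estimate the mean $\bar f_i := \dashint_{\Sph^n} f_i\, dV_{g_{\Sph^n}}$ from above via Jensen's inequality applied to the convex function $x\mapsto e^{nx}$:
\[ n\,\bar f_i \le \ln\!\left(\dashint_{\Sph^n} e^{nf_i}\,dV_{g_{\Sph^n}}\right) \le \ln\!\left(\frac{V}{\Vol_{g_{\Sph^n}}(\Sph^n)}\right), \]
and from below by the pointwise bound $f_i \ge -M$, so that $|\bar f_i|$ is uniformly controlled. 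Combined with the gradient bound $\|\nabla f_i\|_{L^2}\le C$ coming from Lemma~\ref{lem: gradient L2 bound of f} with $p=0$, the Poincaré--Wirtinger inequality on $(\Sph^n,g_{\Sph^n})$ yields $\|f_i-\bar f_i\|_{L^2}\le C\|\nabla f_i\|_{L^2}$, hence $\|f_i\|_{W^{1,2}(\Sph^n)}\le C$ uniformly in $i$. This in particular gives the claimed bound $\int_{\Sph^n} f_i^2\, dV_{g_{\Sph^n}}\le C$.

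By Rellich--Kondrachov compactness, a subsequence of $f_i$ converges strongly in $L^q$ for every $q<\frac{2n}{n-2}$ to some $\tilde f_\infty$. Passing to a further almost-everywhere convergent subsequence and using the continuity of the logarithm on $[e_\infty/4,\infty)$ together with $u_i\to u_\infty$ in $L^p$ from Corollary~\ref{cor:W12WeakConvergence}, the limit must coincide almost everywhere with $\frac{2}{n-2}\ln u_\infty = f_\infty$. Since every subsequence of $f_i$ admits a further subsequence converging to the same $f_\infty$, the full sequence $f_i$ converges to $f_\infty$ in $L^p$ for $p\in[1,\frac{2n}{n-2})$.

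For the lower semicontinuity, the conformal scalar curvature formula \eqref{eq:ConformalScalarFormula} with $\Sc_{g_i}\ge 0$ and $\Sc_{g_{\Sph^n}}=n(n-1)$ gives $2\Delta f_i + (n-2)|\nabla f_i|^2 \le n$, so in particular $\Delta f_i \le n/2$ for every $i$. Proposition~\ref{prop: lower semicontinuous f} then applies and shows that $f_\infty:\Sph^n\to(-\infty,+\infty]$ is defined everywhere and is lower semicontinuous. The main technical hurdle in this chain is producing the uniform $L^2$ bound on $f_i$: neither the gradient bound nor the pointwise lower bound alone suffices, and their effective combination through Poincaré--Wirtinger works only after Jensen's inequality against the upper volume bound pins down the mean $\bar f_i$.
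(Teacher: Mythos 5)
Your proposal is correct and follows essentially the same strategy as the paper: control $\bar f_i$ from above via Jensen's inequality against the volume upper bound, control it from below via the uniform positive lower bound on $u_i$ from Proposition~\ref{prop: uniform positive lower bound}, combine with the gradient $L^2$ bound of Lemma~\ref{lem: gradient L2 bound of f} and Poincar\'e--Wirtinger to get a uniform $W^{1,2}$ bound on $f_i$, then apply Rellich--Kondrachov and finally Proposition~\ref{prop: lower semicontinuous f} (via $\Delta f_i \le n/2$) for lower semicontinuity. The only difference is cosmetic: you bound $\bar f_i$ from below using the pointwise bound $f_i\ge -M$ directly, and you spell out the a.e.\ identification of the $L^p$ limit with $f_\infty=\frac{2}{n-2}\ln u_\infty$, a step the paper leaves implicit.
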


\begin{proof}
    By Jensen's inequality we know
    \begin{align}
        \dashint_{\Sph^n}f_i dV_{g_{\Sph^n}}&=\frac{1}{n}\dashint_{\Sph^n}\ln\left(u_i^{\frac{2n}{n-2}}\right) dV_{g_{\Sph^n}}
        \\&\le \ln\left(\dashint_{\Sph^n}u_i^{\frac{2n}{n-2}}dV_{g_{\Sph^n}}\right) \le \ln\left(\frac{V}{\vol_{g_{\Sph^n}}(\Sph^n)}\right).
    \end{align}
   Then by Proposition \ref{prop: uniform positive lower bound} we know $f_i \ge \frac{\ln(e_{\infty}/4)}{n-2}$ so that
    \begin{align}
       \int_{\Sph^n}f_i dV_{g_{\Sph^n}}\ge\int_{\{f_i\le 0\}}f_i dV_{g_{\Sph^n}} \ge \Vol(\Sph^n)\min\left\{\frac{\ln(e_{\infty}/4)}{n-2},0\right\}.
    \end{align}
    This shows that $\bar{f}_i = \dashint_{\Sph^n}f_i dV_{g_{\Sph^n}}$ is uniformly bounded from above and below. Now by the Poincar\'e inequality and Lemma \ref{lem: gradient L2 bound of f} we have
    \begin{align}
        \int_{\Sph^n}|f_i-\bar{f_i}|^2dV_{g_{\Sph^n}} \le C_P  \int_{\Sph^n}|\nabla f_i|^2dV_{g_{\Sph^n}} \le C',
    \end{align}
    and hence
    \begin{align}
        \int_{\Sph^n} f_i^2dV_{g_{\Sph^n}} &= \int_{\Sph^n} (f_i-\bar{f}_i+\bar{f}_i)^2dV_{g_{\Sph^n}} 
        \\&= \int_{\Sph^n} ((f_i-\bar{f}_i)+\bar{f}_i)^2dV_{g_{\Sph^n}} 
        \le 2 \int_{\Sph^n}|f_i-\bar{f_i}|^2 + \bar{f_i}^2dV_{g_{\Sph^n}} \le C.
    \end{align}
    Then the convergence of $f_i$ follows from Lemma \ref{lem: gradient L2 bound of f} and the Rellich-Kondrachov compactness theorem. The last observation for the limit function $f_\infty$ follows from Proposition \ref{prop: lower semicontinuous f}, since $\Sc_{g_i} \geq 0$ implies $\Delta f_i \leq \frac{n}{2}$.
\end{proof}

%%%%%%%%%%%
\subsection{Lower semi-continuity of limiting conformal factor}\label{subsect: lower semi-continuity}
As another application of the $L^1$ convergence of $u^{\frac{2}{n-2}}_i$ obtained in Lemma \ref{lem: L1 convergence of a power of u}, with the help of ball average inequality for $u^{\frac{2}{n-2}}$ on $\Sph^n$, $ n \geq 5$, in Lemma \ref{lem: ball average inequality high dim}, we prove the lower semi-continuity of $u_\infty$ in the dimension of $n \geq 5$. Combining this with Proposition \ref{prop: lower semicontinuous u} gives the lower semi-continuity of $u_\infty$ in the dimension of $n \geq 3$ stated in Theorem \ref{thm-MainTheorem1}.
\begin{prop}\label{prop: lower semi-continuity of u high dim}
 Let $V, \Lambda > 0$, $ n\geq 3$,  $(\Sph^n,g_{\Sph^n})$ be the standard round sphere, and $f_j:\Sph^n\rightarrow \R$ a sequence of smooth functions defining conformal metrics $g_j=e^{2f_j}g_{\Sph^n}$. If 
    \begin{align}
        \Sc_{g_j} &\ge 0, \qquad V^{-1} \le \Vol_{g_j}(\Sph^n) \le V, \qquad
        \\\Vol_{g_j}(U) &\le  \Lambda \Vol_{g_{\Sph^n}}(U)^{\alpha}, \alpha \in (0,1), \quad \forall U \subset \Sph^n \text{ measurable}, \label{UnifIntegrabilityVolume}
    \end{align}
then the limit function $u_\infty := e^{\frac{(n-2)f_\infty}{2}}$ obtained in Corollary \ref{cor:W12WeakConvergence} is well defined everywhere in $\Sph^n$ with value in $(0, \infty]$, and lower semi-continuous. 
\end{prop}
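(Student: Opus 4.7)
The plan is to split by dimension and reduce to the infrastructure already built. In dimensions $n\in\{3,4\}$ the conclusion follows essentially verbatim from Proposition \ref{prop: lower semicontinuous u}: the hypotheses I need to verify are that $\Delta u_j \leq \frac{n(n-2)}{4}u_j$ (immediate from $\Sc_{g_j}\geq 0$ via Lemma \ref{lem:ScalarFormulas}), that $\|u_j\|_{L^{2n/(n-2)}(\Sph^n)}=\Vol_{g_j}(\Sph^n)^{(n-2)/(2n)}$ is uniformly bounded (immediate from $\Vol_{g_j}(\Sph^n)\leq V$), and that $u_j\to u_\infty$ in $L^1(\Sph^n)$ (supplied by Corollary \ref{cor:W12WeakConvergence}). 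Proposition \ref{prop: lower semicontinuous u} then outputs an everywhere-defined, lower semicontinuous representative of $u_\infty$.

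For $n\geq 5$ the spherical mean machinery is only available for $v_j:=u_j^{2/(n-2)}$, so I would re-run the argument of Proposition \ref{prop: lower semicontinuous u} on $v_j$ in place of $u_j$. Lemma \ref{Lem-Superharmonic} converts $\Delta u_j \leq \frac{n(n-2)}{4}u_j$ into $\Delta v_j\leq \frac{n}{2}v_j$, and $\|v_j\|_{L^n(\Sph^n)}^n = \Vol_{g_j}(\Sph^n)\leq V$ replaces the $L^{2n/(n-2)}$ bound. Lemma \ref{lem: ball average inequality high dim} gives the corresponding Bishop--Gromov monotonicity for $v_j$, and passing to the limit in $j$ requires $L^1$-convergence $v_j\to v_\infty:=u_\infty^{2/(n-2)}$, which is exactly what Lemma \ref{lem: L1 convergence of a power of u} supplies from the positive essential infimum of $u_\infty$ guaranteed by Proposition \ref{prop: positive lower bound}. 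Once the monotonicity holds for $v_\infty$, the argument from Proposition \ref{prop: lower semicontinuous u} defines
\begin{align}
v_\infty(x):=\sup_{s\in(0,\pi/2)} \dashint_{B_s(x)} \bigl(v_\infty - K\, d_{\Sph^n}(\cdot,x)\bigr)\, dV_{g_{\Sph^n}}
\end{align}
as a supremum of functions jointly continuous in $(x,s)$, hence lower semicontinuous and defined everywhere. Since $t\mapsto t^{(n-2)/2}$ is continuous and strictly increasing on $[0,\infty)$, these properties transfer to $u_\infty=v_\infty^{(n-2)/2}$.

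Finally, for the everywhere-positive lower bound $u_\infty>0$, I combine the supremum representation with Proposition \ref{prop: positive lower bound}: there exists $e_\infty>0$ with $u_\infty\geq e_\infty$ a.e.\ (and analogously $v_\infty\geq e_\infty^{2/(n-2)}$ a.e.\ in the high-dimensional case). Then for every $x$ and every $s\in(0,\pi/2)$,
\begin{align}
\dashint_{B_s(x)} \bigl(u_\infty - K\, d_{\Sph^n}(\cdot,x)\bigr)\, dV_{g_{\Sph^n}} \geq e_\infty - Ks,
\end{align}
and letting $s\to 0$ inside the supremum yields $u_\infty(x)\geq e_\infty$ at every $x\in\Sph^n$. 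The main obstacle is orchestrating the $n\geq 5$ case: the $L^1$-convergence of $v_j$ is not automatic and must be bootstrapped from Proposition \ref{prop: positive lower bound} before the ball-average monotonicity can be pushed to the limit; once that bootstrap is in place, everything else is a direct transcription of the low-dimensional argument.
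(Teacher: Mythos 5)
Your proposal follows the paper's proof exactly: for $n\in\{3,4\}$ you invoke Proposition \ref{prop: lower semicontinuous u}, for $n\geq 5$ you re-run the same monotonicity argument on $v_j=u_j^{2/(n-2)}$ using Lemma \ref{Lem-Superharmonic}, Lemma \ref{lem: ball average inequality high dim}, and Lemma \ref{lem: L1 convergence of a power of u} (with the needed positive essential infimum supplied by Proposition \ref{prop: positive lower bound}), and transfer back to $u_\infty$ via the increasing continuous map $t\mapsto t^{(n-2)/2}$. You additionally spell out how the positive essential infimum combines with the ball-average supremum representation to give $u_\infty(x)\geq e_\infty>0$ at every point, a step the paper leaves implicit; this is a correct and welcome clarification, not a deviation.
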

\begin{proof}
In the case of $3 \leq n \leq 4$, the conclusion has been proved in Proposition \ref{prop: lower semicontinuous u}.
In the case of $n\geq 5$, by the same argument as in the proof of Proposition \ref{prop: lower semicontinuous u}, and applying Lemmas \ref{lem: ball average inequality high dim} and \ref{lem: L1 convergence of a power of u}, one can obtain that $u^{\frac{2}{n-2}}_\infty$ is well defined everywhere in $\Sph^n$ with value in $(0, \infty]$ and lower semi-continuous. Then the desired property of $u_\infty$ directly follows. 
\end{proof}

%%%%%%%%%%%%%%%%%%%%%%%%%%%%%%%%%%%%%%%%%%%%%%

\section{Singular Set Decomposition} \label{sec: Singular Set Decomposition}

In this section we prove the portion of Theorem \ref{thm-MainTheorem1} which pertains to the choice of a good set $Z_j$ with good volume controls. Since we will only prove a portion of Theorem \ref{thm-MainTheorem1} we restate that portion of the theorem here.

\begin{thm}\label{thm-MainTheorem-sec6}
    Let $V, \Lambda >0$, $(\Sph^n,g_{\Sph^n})$ be the round sphere, $n \geq 3$, and $f_j:\Sph^n\rightarrow \R$ a sequence of functions defining conformal metrics $g_j=e^{2f_j}g_{\Sph^n}$. Then if 
    \begin{equation}
        \Sc_{g_j} \ge 0, \qquad V^{-1} \le \Vol_{g_j}(M) \le V,
    \end{equation}
    \begin{equation}\label{eqn: uniform-integrability-singular-set}
    \Vol_{g_j}(U) \le  \Lambda \Vol_{g_{\Sph^n}}(U)^{\alpha}, \alpha >0, \quad \forall U \subset \Sph^n \text{ measurable},
    \end{equation}
    then there exists a subsequence, and a limiting lower semicontinuous function $f_{\infty} \in W^{1,2}(\Sph^n)$ so that there is a measurable set $Z_j \subset \Sph^n$ where $\Area_{g_{\Sph^n}}(\partial Z_j)+\Vol_{g_{\Sph^n}}( Z_j)\rightarrow 0$, $\Vol_{g_j}(\Sph^n \setminus Z_j) \rightarrow \Vol_{g_{\infty}}(\Sph^n)$, and $|f_j-f_{\infty}|\le C_j$ on $\Sph^n \setminus Z_j$, $C_j \searrow 0$. 
\end{thm}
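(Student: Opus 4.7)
The plan is to combine Rellich--Kondrachov compactness in $W^{1,2}$ with a Dong-style co-area plus Markov argument applied to the squared difference $h_j := (f_j - f_\infty)^2$.

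First I would observe that Lemma \ref{lem:ScalarFormulas} bounds $\|\nabla f_j\|_{L^2(\Sph^n, g_{\Sph^n})}$ uniformly while Corollary \ref{cor: uniform L^2 bound on f_j} bounds $\|f_j\|_{L^2}$ uniformly, so $(f_j)$ is bounded in $W^{1,2}(\Sph^n, g_{\Sph^n})$. Passing to a subsequence, $f_j$ converges weakly in $W^{1,2}$ and strongly in $L^2$ to a limit which must coincide with the $f_\infty$ produced by Corollary \ref{cor: uniform L^2 bound on f_j}; in particular $f_\infty \in W^{1,2}$, is lower semicontinuous, and is defined everywhere. Set $\epsilon_j := \|f_j - f_\infty\|_{L^2(\Sph^n, g_{\Sph^n})} \to 0$.

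Next, $h_j := (f_j - f_\infty)^2 \in W^{1,1}(\Sph^n, g_{\Sph^n})$ satisfies $\|h_j\|_{L^1} = \epsilon_j^2$ and, by Cauchy--Schwarz and the uniform $W^{1,2}$ bound above,
\[
\int_{\Sph^n} |\nabla h_j|\, dV_{g_{\Sph^n}} \le 2\epsilon_j\,\|\nabla(f_j - f_\infty)\|_{L^2(\Sph^n, g_{\Sph^n})} \le C\epsilon_j.
\]
Choose $s_j := \sqrt{\epsilon_j}$. Markov's inequality gives $\Vol_{g_{\Sph^n}}(\{h_j > s_j\}) \le \epsilon_j^2/s_j = \epsilon_j^{3/2}$, while the co-area formula
\[
\int_{s_j}^{2s_j} \Area_{g_{\Sph^n}}(\{h_j = t\})\, dt \le \int_{\Sph^n}|\nabla h_j|\,dV_{g_{\Sph^n}} \le C\epsilon_j
\]
(with the level set measured in the BV/perimeter sense) lets one pick $\tau_j \in [s_j, 2s_j]$ such that $\Area_{g_{\Sph^n}}(\{h_j = \tau_j\}) \le C\epsilon_j/s_j = C\epsilon_j^{1/2}$. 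Setting $Z_j := \{h_j > \tau_j\}$ then yields $\Vol_{g_{\Sph^n}}(Z_j) \to 0$, $\Area_{g_{\Sph^n}}(\partial Z_j) \to 0$, and on $\Sph^n \setminus Z_j$ the pointwise bound $|f_j - f_\infty| \le \sqrt{\tau_j} \le \sqrt{2}\,\epsilon_j^{1/4} =: C_j \to 0$.

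Finally, to verify $\Vol_{g_j}(\Sph^n \setminus Z_j) \to \Vol_{g_\infty}(\Sph^n)$, pass to a further subsequence so that $f_j \to f_\infty$ pointwise a.e.; Fatou's lemma then gives $\int_{\Sph^n} e^{nf_\infty}\, dV_{g_{\Sph^n}} \le V < \infty$, hence $e^{nf_\infty} \in L^1$, and absolute continuity of the integral combined with $\Vol_{g_{\Sph^n}}(Z_j) \to 0$ forces $\int_{Z_j} e^{nf_\infty}\,dV_{g_{\Sph^n}} \to 0$. On $\Sph^n \setminus Z_j$ one has $|e^{nf_j} - e^{nf_\infty}| \le nC_j e^{nC_j}\, e^{nf_\infty}$, which integrates to zero. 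Combining these two estimates delivers the desired volume convergence. The main obstacle is the joint tuning of $\tau_j$: Markov's inequality requires $\tau_j \gg \epsilon_j^2$ for small volume of $Z_j$, the co-area estimate requires $\tau_j \gg \epsilon_j$ for small perimeter, while $\tau_j$ itself must tend to zero so that $|f_j - f_\infty|$ is uniformly small off $Z_j$; the choice $\tau_j \sim \epsilon_j^{1/2}$ threads all three constraints using only the $L^2$ convergence and the uniform $W^{1,2}$ bound, avoiding any need for higher regularity of $f_\infty$.
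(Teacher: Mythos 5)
Your proof is correct, and the core of it — the Dong-style coarea plus Markov argument applied to $h_j = (f_j - f_\infty)^2$ with the tuning $\tau_j \sim \sqrt{\epsilon_j}$ — is identical to what the paper does, down to the scaling. The one place you deviate is the final step establishing $\Vol_{g_j}(\Sph^n \setminus Z_j) \to \Vol_{g_\infty}(\Sph^n)$. The paper first uses uniform integrability of $e^{nf_j}$ (Dunford--Pettis/Vitali) to pass to a subsequence with $e^{nf_j} \rightharpoonup e^{nf_\infty}$ in $L^1$, deducing $\Vol_{g_j}(\Sph^n) \to \Vol_{g_\infty}(\Sph^n)$, and then kills the contribution of $Z_j$ by applying the hypothesis $\Vol_{g_j}(Z_j) \le \Lambda \Vol_{g_{\Sph^n}}(Z_j)^\alpha$ directly. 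You instead split off $\int_{Z_j} e^{nf_\infty}$, control it by absolute continuity (using Fatou to get $e^{nf_\infty} \in L^1$), and then exploit the uniform bound $|f_j - f_\infty| \le C_j$ on $\Sph^n \setminus Z_j$ to get the pointwise comparison $|e^{nf_j} - e^{nf_\infty}| \le n C_j e^{nC_j} e^{nf_\infty}$. Both routes are valid; yours is slightly more self-contained for that single step (it invokes uniform integrability only through the $L^2$ bound of Corollary \ref{cor: uniform L^2 bound on f_j} rather than again at the end), while the paper's is more symmetric with the way the condition \eqref{eqn: uniform-integrability-singular-set} is used elsewhere. The only thing worth double-checking in your write-up is that the comparison $|e^{nf_j}-e^{nf_\infty}| \le nC_j e^{nC_j} e^{nf_\infty}$ implicitly uses that $f_\infty$ is finite on $\Sph^n\setminus Z_j$, which is immediate since $|f_j - f_\infty| \le C_j$ there and $f_j$ is smooth.
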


\begin{proof}

Consider the following good set
\begin{align}
    E_j^{\tau}&=\{x \in \Sph^n:|f_j(x)-f_{\infty}(x)|^2\le \tau\},
\end{align}
By taking advantage of the Coarea formula for BV functions (see Theorem 5.9 of \cite{Evans-Gariepy}), in a similar way as pioneered by C. Dong \cite{Dong-PMT_Stability}, we find for each $\varepsilon_2>\varepsilon_1>0$ that
\begin{align}
    \int_{\varepsilon_1}^{\varepsilon_2}& \mathcal{H}^2_{g_{\Sph^n}}\left(\partial^*(E_j^{\tau})^c \right)d\tau
    \\&= \int_{E^{\varepsilon_2}_j\setminus E^{\varepsilon_1}_j} |\nabla |f_j-f_{\infty}|^2|dV_{g_{\Sph^n}}
    \\&\le 2\int_{\Sph^n}  |f_j-f_{\infty}||\nabla f_j-\nabla f_{\infty}||dV_{g_{\Sph^n}}
    \\&\le 2\left(\int_{\Sph^n}  |f_j-f_{\infty}|^2dV_{g_{\Sph^n}} \right)^{1/2}\left(\int_{\Sph^n}|\nabla f_j-\nabla f_{\infty}|^2dV_{g_{\Sph^n}}\right)^{1/2} 
    \\&\le 2\left(\int_{\Sph^n}  |f_j-f_{\infty}|^2dV_{g_{\Sph^n}} \right)^{1/2}\left(\int_{\Sph^n}|\nabla f_j|^2+|\nabla f_{\infty}|^2dV_{g_{\Sph^n}}\right)^{1/2} 
    \\&\le C(j),
\end{align}
where $\partial ^*$ denotes the reduced boundary (see chapter 5 of \cite{Evans-Gariepy}) and $C(j) \rightarrow 0$ as $j\rightarrow \infty$ and we have applied Lemma \ref{lem: gradient L2 bound of f} and Corollary \ref{cor: uniform L^2 bound on f_j} in the last line.

Then if we consider $\varepsilon_j=\varepsilon_{2,j}-\varepsilon_{1,j}>0$ we can conclude that there is some $\tau_j \in [\varepsilon_{1,j},\varepsilon_{2,j}]$ so that
\begin{align}
 \mathcal{H}^2_{\Sph^n}\left(\partial^*(E_j^{\tau_j})^c\right)  \le \frac{C(j)}{\varepsilon_j}.
\end{align}
Now if we pick $\varepsilon_j=\sqrt{C(j)}$ then for $\varepsilon_{1,j}=\frac{\sqrt{C(j)}}{2} \le \tau_j \le \sqrt{C(j)}=\varepsilon_{2,j}$ we see that
\begin{align}
 \mathcal{H}^2_{\Sph^n}\left(\partial^*(E_j^{\tau_j})^c\right)  \le 2\sqrt{C(j)}.
\end{align}
Hence if we choose $Z_j=(E^{\tau_j}_j)^c$ we see that $\Area_{g_{\Sph^n}}(\partial^* Z_j)\rightarrow 0$.

Now we would like to show that the volume of $Z_j$ is going to $0$ with respect to $g_{\Sph^n}$. If we apply Chebyshev's inequality we find
\begin{align}
    \Vol_{g_{\Sph^n}}((E_j^{\tau_j})^c)&=\Vol_{g_{\Sph^n}}(\{x\in \Sph^n: |f_j-f_{\infty}|\ge \tau_j\}) 
    \\&\le \frac{1}{\tau_j^2} \int_{\{|f_j-f_{\infty}|\ge \tau_j\}} |f_j-f_{\infty}|^2 dV_{g_{\Sph^n}}
    \\&\le \frac{1}{\tau_j^2} \int_{\Sph^n} |f_j-f_{\infty}|^2 dV_{g_{\Sph^n}} \le C'\frac{C(j)^2}{\tau_j^2} = C'C(j),
\end{align}
and hence $\Vol_{g_{\Sph^n}}((E_j^{\tau_j})^c) \rightarrow 0$ as $j \rightarrow \infty$. This implies that
\begin{align}
    \Vol_{g_{\Sph^n}}(E_j^{\tau_j}) \rightarrow \Vol_{g_{\Sph^n}}(\Sph^n).
\end{align}

By the assumption of uniform integrability of $e^{nf_j}$ in (\ref{eqn: uniform-integrability-singular-set}), we see that there exists a subsequence so that
\begin{align}
    e^{nf_j} \rightharpoonup e^{nf_{\infty}},
\end{align}
in $L^1$. Hence we find that $\Vol_{g_j}(\Sph^n) \rightarrow \Vol_{g_{\infty}}(\Sph^n)$ as well as
\begin{align}
    \Vol_{g_j}(E_j^{\tau_j})&=\int_{E_j^{\tau_j}}e^{nf_j}dV_{g_{\Sph^n}}
    \\&=\int_{\Sph^n}e^{nf_j}dV_{g_{\Sph^n}}-\int_{(E_j^{\tau_j})^c}e^{nf_j}dV_{g_{\Sph^n}},
\end{align}
where we know that
\begin{align}
\int_{(E_j^{\tau_j})^c}e^{nf_j}dV_{g_{\Sph^n}} \le C\Vol_{g_{\Sph^n}}((E_j^{\tau_j})^c)^{\alpha},
\end{align}
by the assumption (\ref{eqn: uniform-integrability-singular-set}).
Hence we see that 
\begin{align}
  \Vol_{g_j}(E_j^{\tau_j})\rightarrow   \Vol_{g_{\infty}}(\Sph^n).
\end{align}
\end{proof}

\section{Total Scalar Curvature Upper Bound}\label{sec: Total Scalar Curvature Upper Bound}
In this section we investigate the consequences of adding the assumption of bounded total scalar curvature by proving the portion of Theorem \ref{thm-MainTheorem2} pertaining to the singular set, as stated below.

\begin{thm}\label{thm-MainTheorem-sec7}
    Let $V, \Lambda >0$, $(\Sph^n,g_{\Sph^n})$ be the standard round sphere, $n \geq 3$, and $f_j:\Sph^n\rightarrow \R$ a sequence of functions defining conformal metrics $g_j=e^{2f_j}g_{\Sph^n}$. Then if 
    \begin{align}
        \Sc_{g_j} &\ge 0, \qquad V^{-1} \le \Vol(M,g_j) \le V, \qquad
        \\\Vol_{g_j}(U) &\le  \Lambda \Vol_{g_{\Sph^n}}(U)^{\alpha}, \alpha >0, \quad \forall U \subset \Sph^n \text{ measurable},
    \end{align}
     and a uniform bound on the scalar curvature
    \begin{align}
        \int_{\Sph^n} \Sc_{g_j} dV_{g_j} \le R_0,
    \end{align}
    then one obtains weak $W^{1,2}$ convergence of $e^{\frac{(n-2)f_j}{2}}$ to $e^{\frac{(n-2)f_{\infty}}{2}}$ on a subsequence.  Furthermore, there is a measurable set $Z_j \subset \Sph^n$ so that $\Area_{g_{\Sph^n}}(\partial Z_j)+\Vol_{g_{\Sph^n}}(Z_j)\rightarrow 0$, $\Vol_{g_j}(\Sph^n \setminus Z_j) \rightarrow \Vol_{g_{\infty}}(\Sph^n)$, and $\left|e^{\frac{(n-2)f_j}{2}}-e^{\frac{(n-2)f_{\infty}}{2}}\right|^2 \le C_j$, $C_j \searrow 0$, on $\Sph^n \setminus Z_j$, and $(\Sph^n,e^{2f_{\infty}}g_{\Sph^n})$ has weak positive scalar curvature in the sense of Definition \ref{def-Weak Positive Scalar Confromal}.
\end{thm}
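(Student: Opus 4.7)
The plan is that compared to Theorem \ref{thm-MainTheorem-sec6}, the extra total-scalar-curvature hypothesis upgrades the weak $W^{1,q}$ bound of Proposition \ref{prop:Improved weak bound} (valid only for $q<\frac{4n}{3n-2}<2$) into a full $W^{1,2}$ bound on $u_j:=e^{(n-2)f_j/2}$; once this is in hand the singular-set construction of Theorem \ref{thm-MainTheorem-sec6} can be rerun for $u_j$ in place of $f_j$, and the weak scalar-curvature inequality for $u_\infty$ can be obtained by passing to the limit in the conformal PDE.

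First I would extract the Dirichlet energy bound. Multiplying the conformal scalar curvature equation
\[
-\frac{4(n-1)}{n-2}\Delta u_j + \Sc_{g_{\Sph^n}} u_j = \Sc_{g_j}\, u_j^{\frac{n+2}{n-2}}
\]
by $u_j$, integrating over $\Sph^n$, integrating by parts, and using $dV_{g_j}=u_j^{2n/(n-2)}dV_{g_{\Sph^n}}$ yields
\[
\int_{\Sph^n}\Sc_{g_j}\,dV_{g_j}=\frac{4(n-1)}{n-2}\int_{\Sph^n}|\nabla u_j|^2\,dV_{g_{\Sph^n}}+\int_{\Sph^n}\Sc_{g_{\Sph^n}}u_j^2\,dV_{g_{\Sph^n}}.
\]
Combining the hypothesis $\int \Sc_{g_j}\,dV_{g_j}\le R_0$ with the H\"older bound $\|u_j\|_{L^2}\le \|u_j\|_{L^{2n/(n-2)}}\Vol_{g_{\Sph^n}}(\Sph^n)^{1/n}$ (controlled via the volume upper bound $V$) gives a uniform $W^{1,2}$ bound on $u_j$. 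Weak $W^{1,2}$ compactness and Rellich--Kondrachov then produce a subsequence with $u_j\rightharpoonup u_\infty$ weakly in $W^{1,2}$ and strongly in $L^p$ for every $p<2n/(n-2)$, and by uniqueness the limit agrees with the $u_\infty$ of Corollary \ref{cor:W12WeakConvergence}.

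Next I would construct $Z_j$ by repeating the argument of Theorem \ref{thm-MainTheorem-sec6} with $u_j$ replacing $f_j$. Setting $E_j^\tau:=\{x:|u_j(x)-u_\infty(x)|^2\le\tau\}$, the BV coarea formula together with the Leibniz rule and Cauchy--Schwarz gives
\[
\int_{\varepsilon_1}^{\varepsilon_2}\Area_{g_{\Sph^n}}\bigl(\partial^*(E_j^\tau)^c\bigr)\,d\tau\le 2\|u_j-u_\infty\|_{L^2}\bigl(\|\nabla u_j\|_{L^2}+\|\nabla u_\infty\|_{L^2}\bigr)=:C(j),
\]
with $C(j)\to 0$ by the strong $L^2$ convergence (note $2<2n/(n-2)$ for $n\ge 3$) and the uniform $W^{1,2}$ bound. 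Choosing $\varepsilon_j=\sqrt{C(j)}$ and averaging produces a good level $\tau_j\in[\sqrt{C(j)}/2,\sqrt{C(j)}]$ with $\Area_{g_{\Sph^n}}(\partial^*Z_j)\le 2\sqrt{C(j)}$ for $Z_j:=(E_j^{\tau_j})^c$, and Chebyshev gives $\Vol_{g_{\Sph^n}}(Z_j)\le \|u_j-u_\infty\|_{L^2}^2/\tau_j^2\to 0$. The decomposition $\Vol_{g_j}(\Sph^n\setminus Z_j)=\Vol_{g_j}(\Sph^n)-\Vol_{g_j}(Z_j)$ together with the uniform integrability bound $\Vol_{g_j}(Z_j)\le\Lambda\Vol_{g_{\Sph^n}}(Z_j)^\alpha\to 0$, and the weak $L^1$ convergence of the volume forms $u_j^{2n/(n-2)}$ (another consequence of \eqref{UnifIntegrabilityVolume}), then yields $\Vol_{g_j}(\Sph^n\setminus Z_j)\to\Vol_{g_\infty}(\Sph^n)$.

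Finally, for the weak positive scalar curvature statement, each smooth $u_j$ satisfies $-\frac{4(n-1)}{n-2}\Delta u_j+\Sc_{g_{\Sph^n}}u_j\ge 0$ pointwise since $\Sc_{g_j}\ge 0$; testing against any nonnegative $\varphi\in W^{1,2}(\Sph^n)$ and integrating by parts gives
\[
-\int_{\Sph^n} g_{\Sph^n}(\nabla\varphi,\nabla u_j)\,dV_{g_{\Sph^n}}\le\frac{n-2}{4(n-1)}\int_{\Sph^n}\Sc_{g_{\Sph^n}}u_j\varphi\,dV_{g_{\Sph^n}},
\]
and taking $j\to\infty$ the left-hand side passes by weak $W^{1,2}$ convergence while the right-hand side passes by strong $L^2$ convergence (recall $\Sc_{g_{\Sph^n}}=n(n-1)$ is constant), verifying Definition \ref{def-Weak Positive Scalar Confromal} for $u_\infty$. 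The main obstacle is really Step 1: one must recognize that the total scalar curvature hypothesis is nothing other than a Dirichlet energy bound on $u_j$ in disguise, obtained by a single integration by parts against the conformal Yamabe-type equation. Once $W^{1,2}$ compactness is available, the singular-set step is a direct transcription of Theorem \ref{thm-MainTheorem-sec6} with $u_j$ in place of $f_j$, and the weak PSC step is a routine limit passage in a linear elliptic inequality.
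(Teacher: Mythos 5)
Your proposal is correct and follows essentially the same route as the paper: your "multiply the Yamabe equation by $u_j$ and integrate by parts" computation produces exactly the identity the paper derives by expanding the conformal scalar curvature formula (they are algebraically identical), the singular-set construction via coarea/Chebyshev for $u_j$ is a direct transcription of the paper's argument, and the weak PSC step via passing to the limit in the tested linear inequality is the same as the paper's. The only cosmetic difference is that you explicitly package the scalar curvature term as $\Sc_{g_{\Sph^n}} u_j^2$ where the paper writes $n(n-1)e^{(n-2)f_j}$; since $\Sc_{g_{\Sph^n}}=n(n-1)$ and $u_j^2=e^{(n-2)f_j}$ these are the same.
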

\begin{proof}
 By Lemma \ref{lem:ScalarFormulas} we see that
 \begin{align}%\label{eq:ConformalScalarFormula2}
         \int_{\Sph^n} &
        \Sc_{g_j} dV_{g_j} \label{Eq-Total Scalar Curvature} \\&=\int_{\Sph^n}e^{nf_j}R_{g_j}dV_{g_{\Sph^n}} 
        \\&=  \int_{\Sph^n}e^{(n-2)f_j}\left(R_{g_0}-2(n-1)\Delta f_j - (n-2)(n-1)|\nabla f_j|^2\right)dV_{g_{\Sph^n}} 
        \\&=  \int_{\Sph^n}n(n-1)e^{(n-2)f_j} + (n-2)(n-1)e^{(n-2)f_j}|\nabla f_j|^2dV_{g_{\Sph^n}}
        \\&=  \int_{\Sph^n}n(n-1)e^{(n-2)f_j} + \frac{4(n-1)}{n-2}|\nabla e^{\frac{(n-2)f_j}{2}}|^2dV_{g_{\Sph^n}}, \label{Eq-Total Scalar Conformal Factor}
    \end{align}
    and hence the $W^{1,2}$ norm of $e^{\frac{(n-2)f_j}{2}}$ is bounded and we obtain subsequential convergence weakly in $W^{1,2}$ to $e^{\frac{(n-2)f_{\infty}}{2}} \in W^{1,2}(\Sph^n)$. 
    
We also notice that for all non-negative $\varphi \in W^{1,2}(\Sph^n)$ that
\begin{equation}
    -\int_{\Sph^n} g\left(\nabla \varphi, \nabla e^{\frac{(n-2)f_j}{2}}\right) dV_{g_{\Sph^n}} \le \frac{n(n-2)}{4} \int_{\Sph^n} \varphi e^{\frac{(n-2)f_j}{2}}dV_{g_{\Sph^n}},
\end{equation}
and hence by the weak convergence we have shown we know that
\begin{equation}\label{eq: Limit Weakly Solved Good PDE}
    -\int_{\Sph^n} g\left(\nabla \varphi, \nabla e^{\frac{(n-2)f_{\infty}}{2}}\right) dV_{g_{\Sph^n}} \le \frac{n(n-2)}{4} \int_{\Sph^n} \varphi e^{\frac{(n-2)f_{\infty}}{2}}dV_{g_{\Sph^n}} ,
\end{equation}
which means that $e^{\frac{(n-2)f_{\infty}}{2}}$ weakly solves the equation $\Delta u \le C u$.

    Now we define the following good set
\begin{equation}
E^{\tau}=\{x \in \Sph^n:|e^{\frac{(n-2)f_j(x)}{2}}-e^{\frac{(n-2)f_{\infty}(x)}{2}}|^2\le \tau\}.
\end{equation}
By taking advantage of the Coarea formula in the same way as pioneered by C. Dong \cite{Dong-PMT_Stability} we find for each $\varepsilon_2>\varepsilon_1>0$ that
\begin{align}
    \int_{\varepsilon_1}^{\varepsilon_2}& \mathcal{H}^2_{\Sph^n}\left(\partial^*(E_j^{\tau})^c\right)d\tau
    \\&=  \int_{E^{\varepsilon_2}_j\setminus E^{\varepsilon_1}_j} |\nabla |e^{\frac{(n-2)f_j(x)}{2}}-e^{\frac{(n-2)f_{\infty}(x)}{2}}|^2|dV_{g_{\Sph^n}}
    \\&\le  2\int_{\Sph^n}  |e^{\frac{(n-2)f_j(x)}{2}}-e^{\frac{(n-2)f_{\infty}(x)}{2}}||\nabla e^{\frac{(n-2)f_j(x)}{2}}-\nabla e^{\frac{(n-2)f_{\infty}(x)}{2}}||dV_{g_{\Sph^n}}
    \\&\le  2\left(\int_{\Sph^n}  |e^{\frac{(n-2)f_j(x)}{2}}-e^{\frac{(n-2)f_{\infty}(x)}{2}}|^2dV_{g_{\Sph^n}} \right)^{1/2}
    \\&  \cdot \left(\int_{\Sph^n}|\nabla e^{\frac{(n-2)f_j(x)}{2}}-\nabla e^{\frac{(n-2)f_{\infty}(x)}{2}}|^2dV_{g_{\Sph^n}}\right)^{1/2} 
    \\&\le  2\left(\int_{\Sph^n}  |e^{\frac{(n-2)f_j(x)}{2}}-e^{\frac{(n-2)f_{\infty}(x)}{2}}|^2dV_{g_{\Sph^n}} \right)^{1/2}
    \\&  \cdot \left(\int_{\Sph^n}|\nabla e^{\frac{(n-2)f_j(x)}{2}}|^2+|\nabla e^{\frac{(n-2)f_{\infty}(x)}{2}}|^2dV_{g_{\Sph^n}}\right)^{1/2} \le C(j),
\end{align}
where where $\partial ^*$ denotes the reduced boundary (see chapter 5 of \cite{Evans-Gariepy}) and $C(j) \rightarrow 0$ as $j\rightarrow \infty$ by Corollary \ref{cor:W12WeakConvergence} and the observation above. 

Then if we consider $\varepsilon_j=\varepsilon_{2,j}-\varepsilon_{1,j}>0$ we can conclude that there is some $\tau_j \in [\varepsilon_{1,j},\varepsilon_{2,j}]$ so that
\begin{equation}
 \mathcal{H}^2_{\Sph^n}\left(\partial^*(E_j^{\tau_j})^c\right)  \le \frac{C(j)}{\varepsilon_j},
\end{equation}
Now if we pick $\varepsilon_j=\sqrt{C(j)}$ then for $\varepsilon_{1,j}=\frac{\sqrt{C(j)}}{2} \le \tau_j \le \sqrt{C(j)}=\varepsilon_{2,j}$ we see that
\begin{align}
 \mathcal{H}^2_{\Sph^n}\left(\partial^*(E_j^{\tau_j})^c\right)  \le 2\sqrt{C(j)}.
\end{align}
Hence if we choose $Z_j=(E^{\tau_j}_j)^c$ we see that $\Area_{g_{\Sph^n}}(\partial^* Z_j)\rightarrow 0$.

Now we would like to show that the volume of $Z_j$ is going to $0$ with respect to $g_{\Sph^n}$. If we apply Chebyshev's inequality we find
\begin{align}
    &\Vol_{g_{\Sph^n}}((E_j^{\tau_j})^c)
    \\&=\Vol_{g_{\Sph^n}}(\{x\in \Sph^n: |e^{\frac{(n-2)f_j(x)}{2}}-e^{\frac{(n-2)f_{\infty}(x)}{2}} |\ge \tau_j\}) 
    \\&\le \frac{1}{\tau_j^2} \int_{\left\{|e^{\frac{(n-2)f_j(x)}{2}}-e^{\frac{(n-2)f_{\infty}(x)}{2}} |\ge \tau_j\right\}} |e^{\frac{(n-2)f_j(x)}{2}}-e^{\frac{(n-2)f_{\infty}(x)}{2}}|^2 dV_{g_{\Sph^n}}
    \\&\le \frac{1}{\tau_j^2} \int_{\Sph^n} |e^{\frac{(n-2)f_j(x)}{2}}-e^{\frac{(n-2)f_{\infty}(x)}{2}}|^2 dV_{g_{\Sph^n}} \le C'\frac{C(j)^2}{\tau_j^2} = C'C(j),
\end{align}
and hence $\Vol_{g_{\Sph^n}}((E_j^{\tau_j})^c) \rightarrow 0$ as $j \rightarrow \infty$. This implies that
\begin{align}
    \Vol_{g_{\Sph^n}}(E_j^{\tau_j}) \rightarrow \Vol_{g_{\Sph^n}}(\Sph^n).
\end{align}

By the assumption of uniform integrability of $e^{nf_j}$ we see that there exists a subsequence so that
\begin{align}
    e^{nf_j} \rightharpoonup e^{nf_{\infty}},
\end{align}
in $L^1$. Hence we find that $\Vol_{g_j}(\Sph^n) \rightarrow \Vol_{g_{\infty}}(\Sph^n)$ as well as
\begin{align}
    \Vol_{g_j}(E_j^{\tau_j})&=\int_{E_j^{\tau_j}}e^{nf_j}dV_{g_{\Sph^n}}
    \\&=\int_{\Sph^n}e^{nf_j}dV_{g_{\Sph^n}}-\int_{(E_j^{\tau_j})^c}e^{nf_j}dV_{g_{\Sph^n}},
\end{align}
where we know that
\begin{align}
    \int_{(E_j^{\tau_j})^c}e^{nf_j}dV_{g_{\Sph^n}} \le \Lambda \left( \Vol_{g_{\Sph^n}}\left( (E_j^{\tau_j})^c \right) \right)^{\alpha}.
\end{align}
Hence we see that 
\begin{align}
  \Vol_{g_j}(E_j^{\tau_j})\rightarrow   \Vol_{g_{\infty}}(\Sph^n).
\end{align}
\end{proof}

    %%%%%%%%%%%%%%%%%%%%%%%%%%%%%%%%%%%%%%%%%%%%%%%%%%%%%%%%%%%%%%%%%
    \bibliographystyle{plain}
    \bibliography{bibliography}
\end{document}